\newcommand{\G}{\mathcal{G}}
\newcommand{\GU}{\G_{\mathcal{U}}}
\newtheorem{theorem}{Theorem}[section]
\newtheorem{prop}[theorem]{Proposition}
\newtheorem{lem}[theorem]{Lemma}
\newtheorem{cor}[theorem]{Corollary}
\newtheorem{rem}[theorem]{Remark}
\newtheorem{ass}[theorem]{Assumption}
\newtheorem{mydef}[theorem]{Definition}
\newtheorem{example}[theorem]{Example}
\begin{document}

\title{Weak Pullbacks of Topological Groupoids}
\author{A. Censor}
\address{Aviv Censor, School of Mathematical Sciences,
Tel-Aviv University, Tel-Aviv 69978, Israel}
\email{avivc@post.tau.ac.il}
\author{D. Grandini}
\address{Daniele Grandini, Department of Mathematics,
University of California at Riverside, Riverside, CA 92521, U.S.A.}
\email{daniele@math.ucr.edu}
\date{\today}
\begin{abstract}
We introduce the category $\mathcal{HG}$, whose objects are topological groupoids endowed with compatible measure theoretic data: a Haar system and a measure on the unit space. We then define and study the notion of weak-pullback in the category of topological groupoids, and subsequently in $\mathcal{HG}$. The category $\mathcal{HG}$ is the setting for topological groupoidification, which we present in separate papers, and in which the weak pullback is a key ingredient.
\end{abstract}
\keywords{Groupoid; Haar groupoid; Weak pullback; Haar system; quasi invariant measure; disintegration.} \subjclass[2010]{22A22; 
28A50}

\maketitle

\section{Introduction}

The leading actors in this paper are groupoids that we call \textit{Haar groupoids\footnote{A discussion regarding terminology appears at the end of this introduction.}}. A Haar groupoid is a topological groupoid endowed with certain compatible measure theoretic ingredients. More precisely, a Haar groupoid is a locally compact, second countable, Hausdorff groupoid $G$, which admits a continuous left Haar system $\lambda^{\bullet}$, and is equipped with a non-zero Radon measure $\mu^{(0)}$ on its unit space $G^{(0)}$, such that $\mu^{(0)}$ is quasi-invariant with respect to $\lambda^{\bullet}$. Maps between Haar groupoids are continuous groupoid homomorphisms, which respect the extra structure in an appropriate sense. One is naturally led to define a category, which we denote by $\mathcal{HG}$, the category of Haar groupoids. Section \ref{sec:preliminaries} introduces this category.

A general study of the category $\mathcal{HG}$ from a purely categorical perspective will be presented in a separate paper. In this paper we focus on one specific categorical notion, namely the \textit{weak pullback}. We first construct the weak pullback of topological groupoids. The weak pullback of the following given cospan diagram of topological groupoids and continuous homomorphisms:
\[
\xymatrix{
& S \ar[dr]_{p} & & T \ar[dl]^{q} & \\
 & & G & &
}
\]
is a topological groupoid $P$ along with projections $\pi_S : P \rightarrow S$ and $\pi_T : P \rightarrow T$, which together give rise to the following diagram (which does \textit{not} commute):
\[
\xymatrix{
& & P \ar[dl]_{\pi_S} \ar[dr]^{\pi_T} & &\\
& S \ar[dr]_{p} & & T \ar[dl]^{q} & \\
 & & G & &
}
\]
As a set, $P$ is contained in the cartesian product $S \times G \times T$, from which it inherits its topology. The elements of $P$ are triples of the form $(s,g,t)$, where $p(s)$ and $q(t)$ are not equal to $g$, but rather in the same orbit of $G$ via $g$. More precisely, denoting the range and source maps of $G$ by $r_G$ and $d_G$ respectively, $$P := \{ (s,g,t) ~|~ s\! \in\! S, \ g\! \in\! G, \ t \!\in\! T, \; r_G(g) \!=\! r_G(p(s)) \ \text{ and } \ d_G(g) \!=\! r_G(q(t)) \}.$$ The groupoid structure of $P$ is described in Section \ref{sec:topological wpb}, followed by a discussion of its properties. In the discrete groupoid setting, our notion of weak pullback reduces to the one introduced by Baez et al. in \cite{baez-hoffnung-walker}, which in turn generalizes the more familiar notion of a pullback in the category of sets.

Upgrading the weak pullback from topological groupoids to the category $\mathcal{HG}$ requires non-trivial measure theory and analysis. In Section \ref{sec:Haar system for wpb} we construct a Haar system for $P$. Section \ref{sec:measure for wpb} is then devoted to creating a quasi invariant measure on $P^{(0)}$. Finally, in Section \ref{sec:WPB for HG}, we prove that with these additional ingredients, subject to a certain additional assumption, we indeed obtain a weak pullback in $\mathcal{HG}$.

This paper is part of a project we are currently working on, in which we are extending groupoidification from the discrete setting to the realm of topology and measure theory. \emph{Groupoidification} is a form of categorification, introduced by John Baez and James Dolan. It has been successfully applied to several structures, which include Feynman Diagrams, Hecke Algebras and Hall Algebras. An excellent account of groupoidification and its triumphs to date can be found in \cite{baez-hoffnung-walker}. So far, the scope of groupoidification and its inverse process of degroupoidification has been limited to purely algebraic structures and discrete groupoids. The category $\mathcal{HG}$ provides the setting for our attempt at topological groupoidification, in which the notion of the weak pullback plays a vital role. This line of research is pursued in separate papers.

This paper relies heavily on general topological and measure theoretic techniques related to Borel and continuous systems of measures and their mapping properties. A detailed study of this necessary background theory appears in our paper \cite{BSM}, from which we quote many definitions and results and to which we make frequent references throughout this text.

\subsection{A note about terminology}\label{subsec:Haar groupoid terminology}

Seeking a distinctive name for the groupoids we consider in these notes and in our subsequent work on topological groupoidification, we opted to call them ``Haar groupoids". These groupoids bear close resemblance to \emph{measure groupoids} with Haar measures, as studied by Peter Hahn in \cite{Hahn1}, following Mackey \cite{Mackey} and Ramsay \cite{Ramsay virtual groups}, leading to the theory of groupoid von-Neumann algebras. Like the groupoids we consider, measure groupoids carry a measure (or measure class), which admits a disintegration via the range map, namely what is nowadays known as a Haar system. The main discrepancies are that we require our groupoids to exhibit a nice topology (locally compact, Hausdorff) and to be endowed with a \emph{continuous} Haar system, whereas measure groupoids need only have a Borel structure in general, and host Borel Haar systems. 

Locally compact topological groupoids which may admit continuous Haar systems are as well studied in the literature as measure groupoids, in particular as part of groupoid $C^*$-algebra theory as developed by Jean Renault in \cite{renault-book} (other standard references include \cite{muhly-book-unpublished} and \cite{paterson-book}). In many cases locally compact groupoids indeed exhibit the full structure of our Haar groupoids, yet the literature does not single them out terminology-wise.

\section{Preliminaries and the category $\mathcal{HG}$}\label{sec:preliminaries}

We begin by fixing notation. We shall denote the \textit{unit space} of a groupoid $G$ by $G^{(0)}$ and
the set of \textit{composable pairs} by $G^{(2)}$. The \textit{range} (or target) and \textit{domain} (or source) maps of $G$ are denoted respectively by $r$ and $d$, or by $r_G$ and $d_G$ when disambiguation is necessary. We set $G^u=\{ x\in G ~|~ r(x)=u\}$, $G_v=\{ x\in G ~|~ d(x)=v\}$ and $G^u_v = G^u \cap G_v$, for all $u,v \in G^{(0)}$. Thus $G^u_u$ is the \textit{isotropy group} at $u$.

We let $\underline{G} = G^{(0)} / G = \{ [u] ~|~ u \in G^{(0)} \}$ denote the \textit{orbit space} of a groupoid $G$. The orbit space $\underline{G}$ inherits a topology from $G$ via $G^{(0)}$, defined by declaring $W \subseteq \underline{G}$ to be open whenever $q^{-1}(W)$ is open in $G^{(0)}$, where $q:G^{(0)} \longrightarrow \underline{G}$ is the quotient map $u \mapsto [u]$. \\

\emph{Throughout this paper, we will assume our topological groupoids to be second countable, locally compact and Hausdorff.} Any such groupoid $G$ is metrizable
and normal, and satisfies that every locally finite measure is $\sigma$-finite. Moreover, $G$ is a Polish space and hence strongly Radon, i.e. every locally finite Borel measure is a Radon measure. For more on Polish groupoids, we refer the reader to a paper by Ramsay \cite{Ramsay polish groupoids}. In general, however, $\underline{G}$ does \emph{not} necessarily inherit these properties, a fact that will require occasional extra caution. \\

Haar systems for groupoids play a key role in this paper. In the groupoid literature, modulo minor discrepancies between various sources (see for example standard references such as \cite{muhly-book-unpublished}, \cite{paterson-book}, \cite{renault-book} and \cite{renault-anantharaman-delaroche}), a continuous left Haar system is usually defined to be a family $\lambda= \{\lambda^u : u \in G^{(0)} \}$ of positive (Radon) measures on $G$ satisfying the following properties:
\begin{enumerate}
\item
$supp(\lambda^u) = G^u$ for every $u \in G^{(0)}$;
\item
for any $f \in C_c(G)$, the function $u \mapsto \int f d\lambda^u$ on $G^{(0)}$ is in $C_c(G^{(0)})$;
\item
for any $x \in G$ and $f \in C_c(G)$, $\int f(xy)d\lambda^{d(x)}(y) = \int f(y)d\lambda^{r(x)}(y).$
\end{enumerate}
In this paper we shall use Definition \ref{def:Haar system} below as our definition of a Haar system. It is taken from \cite{BSM}, where it is shown to be equivalent to the more common definition above. For the convenience of the reader we include here a very brief summary of the notions from \cite{BSM} that lead to Definition \ref{def:Haar system}, all of which we will use extensively throughout this paper. Henceforth, as in \cite{BSM}, all topological spaces are assumed to be second countable and $\mathbf{T_1}$ in general, and also locally compact and Hausdorff whenever dealing with continuous systems of measures.

Let $\pi:X \rightarrow Y$ be a Borel map. A \textit{system of measures} (\cite{BSM}, Definition 2.2) on $\pi$ is a family of (positive, Borel) measures $\lambda^{\bullet} = \{ \lambda^y \}_{y \in Y}$ such that:
\begin{enumerate}
\item Each $\lambda^y$ is a Borel measure on $X$;
\item For every $y$, $\lambda^y$ is concentrated on $\pi^{-1}(y)$.
\end{enumerate}
We will denote a map $\pi:X \rightarrow Y$ admitting a system of measures $\lambda^{\bullet}$ by the diagram
$\xymatrix{X\ar [rr]^{\pi}_{\lambda^{\bullet}}&&Y}$.

We will say that a system of measures $\lambda^{\bullet}$ is \textit{positive on open sets} (\cite{BSM}, Definition 2.3) if $\lambda^y(A) > 0$ for every $y \in Y$ and for every open set $A \subseteq X$ such that $A \cap \pi^{-1}(y) \neq \emptyset$.
A system of measures $\lambda^{\bullet}$ on a continuous map $\pi:X \rightarrow Y$ will be called
a \textit{continuous system of measures} or \textit{CSM} (\cite{BSM}, Definition 2.5) if for every non-negative continuous compactly supported function $0 \leq f\in C_c(X)$, the map $ y \mapsto \int_X f(x)d \lambda^y (x)$ is a continuous function on $Y$. A system of measures $\lambda^{\bullet}$ on a Borel map $\pi:X \rightarrow Y$ is called a \textit{Borel system of measures} or \textit{BSM} (\cite{BSM}, Definition 2.6) if for every Borel subset $E\subseteq X$, the function $\lambda^{\bullet}(E):Y\rightarrow [0,\infty]$ given by $y\mapsto \lambda^{y}(E)$ is a Borel function. A system of measures $\lambda^{\bullet}$ satisfying that every $x \in X$ has a neighborhood $U_x$ such that $\lambda^{y}(U_x)<\infty$ for every $y \in Y$, will be called \textit{locally finite} (\cite{BSM}, Definition 2.14), and \emph{locally bounded} if there is a constant $C_x >0$ such that $\lambda^y(U_x) < C_x$ for any $y \in Y$ (\cite{BSM}, Definition 2.3). A detailed discussion of the mutual relations between the above concepts appears in \cite{BSM}.

Let $G$ be a topological groupoid. A system of measures $\lambda^{\bullet}$ on the range map $r:G \rightarrow G^{(0)}$ is said to be a \textit{system of measures on $G$} (\cite{BSM}, Definition 7.1).
It is called \textit{left invariant} (\cite{BSM}, Definition 7.2) if for every $x \in G$ and for every Borel subset $E \subseteq G$, $$\lambda^{d(x)} (E) = \lambda^{r(x)}\left(x \cdot (E \cap G^{d(x)})\right).$$

\begin{mydef}\label{def:Haar system}(\cite{BSM}, Definition 7.5)
A continuous left \textbf{Haar system} for $G$ is a system of measures $\lambda^{\bullet}$ on $G$ which is continuous, left invariant and positive on open sets.
\end{mydef}

Playing side by side to the Haar system $\lambda^{\bullet}$, another leading actor in our work is a Radon measure on the unit space $G^{(0)}$ of a groupoid $G$, which we denote by $\mu^{(0)}$. The measure $\mu^{(0)}$ will be related to $\lambda^{\bullet}$ via the notion of quasi invariance, which we spell out below. We usually follow \cite{muhly-book-unpublished}, where the reader can find much more about the important role of quasi invariant measures in groupoid theory.

\begin{mydef}\label{def:induced measure muG}
Let $G$ be a groupoid admitting a Haar system $\lambda^{\bullet}$ and a Radon measure $\mu^{(0)}$ on $G^{(0)}$. The \textbf{induced measure} $\mu$ on $G$ is defined for any Borel set $E \subseteq G$ by the formula:
$$\mu (E) = \int_{G^{(0)}} \lambda^u (E) d\mu^{(0)} (u) .$$
\end{mydef}

\begin{lem}\label{lem:properties of induced measure}
The induced measure $\mu$ is a Radon measure on $G$.
\end{lem}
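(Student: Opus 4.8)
The plan is to establish three things in turn: that the formula defining $\mu$ makes sense and yields a genuine Borel measure, that $\mu$ is locally finite, and finally that local finiteness upgrades to the full Radon property by invoking the ambient topology of $G$.

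First I would verify well-definedness and countable additivity. For the integral $\int_{G^{(0)}} \lambda^u(E)\,d\mu^{(0)}(u)$ to make sense for every Borel $E \subseteq G$, the integrand $u \mapsto \lambda^u(E)$ must be $\mu^{(0)}$-measurable. Since $\lambda^{\bullet}$ is a continuous system of measures on the range map, it is in particular a Borel system of measures (by the relevant implication in \cite{BSM}), so $u \mapsto \lambda^u(E)$ is a Borel function on $G^{(0)}$ and the integral is defined, possibly taking the value $+\infty$. Clearly $\mu(\emptyset) = 0$. For countable additivity, given pairwise disjoint Borel sets $E_n$ with union $E$, each measure $\lambda^u$ is countably additive, so $\lambda^u(E) = \sum_n \lambda^u(E_n)$ pointwise in $u$; applying the monotone convergence theorem to the partial sums then gives $\mu(E) = \sum_n \mu(E_n)$. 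Thus $\mu$ is a Borel measure on $G$.

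Next I would prove that $\mu$ is locally finite. Fix $x \in G$ and choose a relatively compact open neighborhood $U$ of $x$. Pick $f \in C_c(G)$ with $0 \le f$ and $f \ge 1$ on $\overline{U}$; then $\lambda^u(U) \le \int f\,d\lambda^u =: h(u)$ for every $u$. Because $\lambda^{\bullet}$ is a Haar system, property (2) of the standard definition (equivalent to Definition \ref{def:Haar system}) gives $h \in C_c(G^{(0)})$, so $h$ is bounded and supported on a compact set $K \subseteq G^{(0)}$. Consequently
\[
\mu(U) = \int_{G^{(0)}} \lambda^u(U)\,d\mu^{(0)}(u) \le \int_K h\,d\mu^{(0)} \le \|h\|_{\infty}\,\mu^{(0)}(K).
\]
Since $\mu^{(0)}$ is Radon, $\mu^{(0)}(K) < \infty$, whence $\mu(U) < \infty$. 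As $x$ was arbitrary, $\mu$ is locally finite. Finally, I would invoke the standing hypotheses on $G$: as noted earlier, $G$ is a Polish space and therefore strongly Radon, so every locally finite Borel measure on $G$ is automatically Radon. Having shown $\mu$ to be a locally finite Borel measure, we conclude that $\mu$ is Radon.

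I expect the only real subtlety to be the local finiteness step --- specifically, controlling $\mu$ on a neighborhood of an arbitrary point uniformly over $u$. The key leverage is the compact support of $u \mapsto \int f\,d\lambda^u$ guaranteed by the Haar system axioms, together with finiteness of $\mu^{(0)}$ on compacta; everything else (measurability of the integrand, countable additivity, and the final Radon upgrade) is essentially bookkeeping backed by results quoted from \cite{BSM} and the Polish structure of $G$.
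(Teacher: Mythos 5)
Your proof is correct, and its overall skeleton matches the paper's: both reduce the Radon property to local finiteness via the standing observation that $G$ is Polish and hence strongly Radon. Where you diverge is in how local finiteness is obtained. The paper treats $\mu$ as the composition of the system $\lambda^{\bullet}$ with the measure $\mu^{(0)}$ and cites Corollary 3.7 of \cite{BSM}, whose hypotheses (a locally bounded BSM composed with a locally finite measure) are checked by quoting Lemma 2.11 and Proposition 2.23 of \cite{BSM}. You instead give a direct estimate: dominate $\lambda^u(U)$ by $h(u)=\int f\,d\lambda^u$ for a Urysohn function $f$ that is at least $1$ on $\overline{U}$, use the fact that $h\in C_c(G^{(0)})$ (property (2) of the classical definition of a Haar system, which the paper states is equivalent to Definition \ref{def:Haar system} by \cite{BSM}), and integrate against $\mu^{(0)}$ over the compact support of $h$. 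This is essentially the content of the cited corollary unpacked in the case at hand; it is more self-contained and makes visible exactly which features of the Haar system drive the finiteness, namely the compact support of the integrated function together with finiteness of the Radon measure $\mu^{(0)}$ on compacta. Your preliminary verification that $\mu$ is a well-defined Borel measure (measurability of $u\mapsto\lambda^u(E)$ via the CSM-implies-BSM implication, and countable additivity via monotone convergence) is bookkeeping that the paper leaves implicit in the composition formalism of \cite{BSM}, but it is correct and harmless to include.
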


\begin{proof}
Since $G$ is strongly Radon, it suffices to prove that $\mu$ is locally finite. The induced measure $\mu$ is obtained as a composition of the system $\lambda^{\bullet}$ with the measure $\mu^{(0)}$. The Haar system $\lambda^{\bullet}$ is a CSM, hence a locally bounded BSM, by Lemma 2.11 and Proposition 2.23 of \cite{BSM}. In addition, the measure $\mu^{(0)}$ is locally finite. Therefore, the conditions of Corollary 3.7 in \cite{BSM} are met, and we conclude that $\mu$ is locally finite.
\end{proof}

The following simple observation will be useful in the sequel.

\begin{lem}\label{lem:integrating muG vs muG0}
For any Borel function $f$ on $G$:
$$\int_G f(x) d\mu(x) = \int_{G^{(0)}} \left( \int_G f(x) d\lambda^u (x) \right) d\mu^{(0)} (u).$$
\end{lem}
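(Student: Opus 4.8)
The plan is to invoke the standard measure-theoretic approximation argument, building the identity up from indicator functions. First I would observe that for $f = \chi_E$ the indicator of a Borel set $E \subseteq G$, the claimed formula reads $\int_G \chi_E \, d\mu = \int_{G^{(0)}} \lambda^u(E) \, d\mu^{(0)}(u)$, which is precisely the content of Definition \ref{def:induced measure muG}. Before proceeding, I would check that the right-hand side is even well-defined: for this the inner integral $u \mapsto \int_G f \, d\lambda^u$ must be a Borel function on $G^{(0)}$. For $f = \chi_E$ this is exactly the assertion that $\lambda^{\bullet}$ is a Borel system of measures, which holds since $\lambda^{\bullet}$ is a CSM and hence a BSM, as already recorded in the proof of Lemma \ref{lem:properties of induced measure}.

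Next I would extend the identity by linearity to non-negative simple functions $f = \sum_{i} a_i \chi_{E_i}$, using linearity of both the integral against $\mu$ and the iterated integrals on the right, together with the fact that a finite linear combination of the Borel functions $u \mapsto \lambda^u(E_i)$ is again Borel. The crucial step is then to pass to an arbitrary non-negative Borel function $f$. I would choose an increasing sequence of non-negative simple functions $f_n \uparrow f$ and apply the Monotone Convergence Theorem twice: once to the inner integrals, giving $\int_G f_n \, d\lambda^u \uparrow \int_G f \, d\lambda^u$ pointwise in $u$ (which in particular exhibits $u \mapsto \int_G f\,d\lambda^u$ as a pointwise limit of Borel functions, hence itself Borel), and once to the outer integral against $\mu^{(0)}$. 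Applying MCT on the left to obtain $\int_G f_n \, d\mu \uparrow \int_G f \, d\mu$ and matching the two sides through the simple-function identity then yields the formula for all non-negative Borel $f$.

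Finally, for a general real- or complex-valued Borel function $f$ I would decompose $f$ into its positive and negative parts (and, in the complex case, its real and imaginary parts), apply the non-negative case to each, and recombine by linearity, under the usual proviso that $f$ be $\mu$-integrable so that no $\infty - \infty$ ambiguity arises in the subtraction. The main obstacle is not any single computation but the bookkeeping needed to guarantee Borel measurability of the inner integral at each stage and to justify the interchange of limit and integration at both the inner and outer levels; both are handled cleanly by the BSM property of $\lambda^{\bullet}$ and the Monotone Convergence Theorem, so no genuinely hard analysis is required beyond what is already packaged in the composition-of-measures machinery of \cite{BSM}.
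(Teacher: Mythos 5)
Your proof is correct and follows essentially the same route as the paper: the paper verifies the identity for indicator functions directly from Definition \ref{def:induced measure muG} and dismisses the extension to general Borel $f$ as ``routine,'' which is precisely the simple-function/monotone-convergence argument you spell out. Your added care about Borel measurability of the inner integral (via the BSM property of $\lambda^{\bullet}$) is a welcome, if standard, elaboration of that routine step.
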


\begin{proof}
For every Borel subset $E \subseteq G$, by Definition \ref{def:induced measure muG}, $$\int_G \chi_{_E} (x) d\mu(x) = \mu (E) = \int_{G^{(0)}} \lambda^u (E) d\mu^{(0)} (u) = \int_{G^{(0)}} \left( \int_G \chi_{_E} (x) d\lambda^u (x) \right) d\mu^{(0)} (u).$$ Generalizing from $\chi_{_E}$ to any Borel function $f$ is routine.
\end{proof}

The image of $\mu$ under inversion is defined by $$\mu^{-1}(E) := \mu(E^{-1}) = \mu( \{ x^{-1} ~|~ x \in E \} ) $$
for any Borel set $E \subseteq G$.

\begin{rem}\label{rem:relating muG to its inverse}
It is a standard exercise to show that for any Borel function $f$,
$$\int_G f(x) d\mu^{-1}(x) = \int_G f(x) d\mu (x^{-1}).$$
\end{rem}

\begin{mydef}\label{def:quasi-invariant measure}
Let $G$ be a groupoid admitting a Haar system $\lambda^{\bullet}$ and a Radon measure $\mu^{(0)}$ on $G^{(0)}$. The measure $\mu^{(0)}$ is called \textbf{quasi invariant} if the induced measure $\mu$ satisfies $\mu \sim \mu^{-1}$.
\end{mydef}
\noindent Here $\sim$ denotes equivalence of measures in the sense of being mutually absolutely continuous.

\begin{rem}\label{rem:Delta}
Let $\mu^{(0)}$ be quasi invariant. The Radon-Nikodym derivative $\Delta = d\mu / d\mu^{-1}$ is called the modular function of $\mu$. Although $\Delta$ is determined only a.e., it can be chosen (\cite{muhly-book-unpublished}, Theorem 3.15) to be a homomorphism from $G$ to $\mathbb{R}^{\times}_+$, so we will assume this to be the case. Recall that for any Borel function $f$,
\begin{equation}\label{eq:Delta}
\int_G f(x) d\mu(x) = \int_G f(x) \Delta (x) d\mu^{-1} (x).
\end{equation}
Furthermore, $\Delta^{-1} = d\mu^{-1} / d\mu$ satisfies the useful formula
\begin{equation}\label{eq:Delta inverse}
\int_G f(x) \Delta^{-1} (x) d\mu(x) = \int_G f(x^{-1}) d\mu(x),
\end{equation}
since $\int_G f(x) \Delta^{-1} (x) d\mu(x) = \int_G f(x) d\mu^{-1} (x) = \int_G f(x) d\mu (x^{-1}) = \int_G f(x^{-1}) d\mu(x)$ by Remark \ref{rem:relating muG to its inverse}.
\end{rem}

\begin{mydef}\label{def:Haar groupoid}
Let $G$ be a topological groupoid, which satisfies the following assumptions:
\begin{enumerate}
\item
The topology of $G$ is locally compact, second countable and Hausdorff.
\item
$G$ admits a continuous left Haar system $\lambda^{\bullet}$.
\item
$G^{(0)}$ is equipped with a non-zero Radon measure $\mu^{(0)}$ which is quasi-invariant with respect to $\lambda^{\bullet}$.
\end{enumerate}
Such a groupoid will be called a \textbf{Haar groupoid}.
\end{mydef}

We will denote a Haar groupoid by $(G,\lambda^{\bullet},\mu^{(0)})$, or just by $G$ when $\lambda^{\bullet}$ and $\mu^{(0)}$ are evident from the context.

\begin{mydef}\label{def:Haar groupoid homomorphism}
Let $(G,\lambda^{\bullet},\mu^{(0)})$ and $(H,\eta^{\bullet},\nu^{(0)})$ be Haar groupoids. Let $p:G\rightarrow H$ be a continuous groupoid homomorphism which is also measure class preserving with respect to the induced measures, i.e. $p_*(\mu) \sim \nu$. We say that $p$ is a \textbf{homomorphism of Haar groupoids}.
\end{mydef}
\noindent In the above definition $p_*$ is the push-forward, defined for any Borel set $E \subset H$ by $p_* \mu(E) = \mu (p^{-1}(E))$. A homomorphism of Haar groupoids is also measure class preserving on the unit spaces, as we shall shortly see. We first need the following fact.
\begin{lem}\label{lem:rG is measure class preserving}
Let  $(G,\lambda^{\bullet},\mu^{(0)})$ be a Haar groupoid. The range map $r:G \rightarrow G^{(0)}$ satisfies $r_*(\mu) \sim \mu^{(0)}$.
\end{lem}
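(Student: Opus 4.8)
The plan is to compute the pushforward $r_*(\mu)$ explicitly, exhibit it as a measure possessing a strictly positive density with respect to $\mu^{(0)}$, and then read off mutual absolute continuity. For a Borel set $A \subseteq G^{(0)}$ I would start from the definition of the pushforward and apply Lemma \ref{lem:integrating muG vs muG0} to the indicator $\chi_{r^{-1}(A)}$, obtaining
\[
r_*(\mu)(A) = \mu\bigl(r^{-1}(A)\bigr) = \int_{G^{(0)}} \lambda^u\bigl(r^{-1}(A)\bigr)\, d\mu^{(0)}(u).
\]

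The second step is to evaluate the inner integrand. Because $\lambda^{\bullet}$ is a system of measures on the range map, each $\lambda^u$ is concentrated on $G^u = r^{-1}(u)$; hence $\lambda^u\bigl(r^{-1}(A)\bigr) = \lambda^u\bigl(r^{-1}(A) \cap G^u\bigr)$, and the set $r^{-1}(A) \cap G^u$ equals $G^u$ when $u \in A$ and is empty otherwise. This gives $\lambda^u\bigl(r^{-1}(A)\bigr) = \chi_A(u)\,\lambda^u(G)$, so that
\[
r_*(\mu)(A) = \int_A \lambda^u(G)\, d\mu^{(0)}(u).
\]
Thus $r_*(\mu)$ has Radon--Nikodym density $\varphi(u) := \lambda^u(G)$ relative to $\mu^{(0)}$, a function that is Borel since a continuous Haar system is in particular a BSM.

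It remains to deduce the equivalence from this formula. The inclusion $r_*(\mu) \ll \mu^{(0)}$ is immediate, as an integral over a $\mu^{(0)}$-null set vanishes. For the reverse inclusion $\mu^{(0)} \ll r_*(\mu)$ the essential ingredient is that $\varphi$ is strictly positive at every point: each unit satisfies $r(u) = u$, so $u \in G^u$ and in particular $G^u \neq \emptyset$; applying the positive-on-open-sets property of a Haar system (Definition \ref{def:Haar system}) to the open set $G$, which meets $r^{-1}(u) = G^u$, yields $\lambda^u(G) > 0$. Since $\varphi > 0$ everywhere, the vanishing of $\int_A \varphi\, d\mu^{(0)}$ forces $\mu^{(0)}(A) = 0$, giving $\mu^{(0)} \ll r_*(\mu)$ and hence $r_*(\mu) \sim \mu^{(0)}$.

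The computation itself is routine; the one point carrying genuine content---and the step I would flag as the crux---is the strict positivity of $u \mapsto \lambda^u(G)$, which is precisely where the positive-on-open-sets axiom (together with the fact that units lie in their own range fibers) enters, and which would fail for a mere Haar measure class lacking this support condition.
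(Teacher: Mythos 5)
Your proposal is correct and follows essentially the same route as the paper's proof: both reduce $r_*(\mu)(A)$ to $\int_A \lambda^u(G)\,d\mu^{(0)}(u)$ via the definition of the induced measure and the fact that $\lambda^u$ is concentrated on $G^u$, and both deduce equivalence from the strict positivity of $\lambda^u(G)$, which the paper obtains from $supp(\lambda^u)=G^u\neq\emptyset$ and you obtain by applying the positive-on-open-sets axiom directly to the open set $G$. Your closing remark correctly identifies this positivity as the crux of the argument.
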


\begin{proof}
Let $E \subseteq G^{(0)}$ be a Borel subset. We need to show that $\mu(r^{-1}(E)) =0$ if and only if $\mu^{(0)}(E) \!=\! 0$. By the definition of the induced measure, $\mu (r^{-1}(E)) \!=\! \int_{G^{(0)}} \lambda^u (r^{-1}(E)) d\mu^{(0)} (u)  \!=\! \int_{G^{(0)}} \chi_{_E}(u) \lambda^u (G) d\mu^{(0)} (u)$, since $\lambda^u (r^{-1}(E))  \!=\! 0$ if $u \notin E$ whereas $\lambda^u (r^{-1}(E)) = \lambda^u (G)$ if $u\in E$.
Since $\lambda^{\bullet}$ is a Haar system, $supp(\lambda^u) = G^u \neq \emptyset$, and in particular $\lambda^u (G) >0$ for every $u$. It follows that $\mu(r^{-1}(E)) =0$ if and only if $\chi_{_E}(u) = 0$  $\mu^{(0)}$-a.e., which is if and only if $\mu^{(0)}(E) = 0$.
\end{proof}
\noindent While the proof we included above is elementary, we point out that Lemma \ref{lem:rG is measure class preserving} also follows from the fact that by the definition of the induced measure $\mu$, the Haar system $\lambda^{\bullet}$ is a disintegration of $\mu$ with respect to $\mu^{(0)}$, which implies that $r:G \rightarrow G^{(0)}$ is measure class preserving. See Lemma 6.4 of \cite{BSM}.

Slightly abusing notation, we also denote the restriction of $p$ to $G^{(0)}$ by $p$.
\begin{prop}
Let $(G,\lambda^{\bullet},\mu^{(0)})$ and $(H,\eta^{\bullet},\nu^{(0)})$ be Haar groupoids, and let $p:G\rightarrow H$ be a homomorphism of Haar groupoids. Then $p_*(\mu^{(0)}) \sim \nu^{(0)}$.
\end{prop}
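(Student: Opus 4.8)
The plan is to exploit the commutative square relating $p$ to the range maps of $G$ and $H$, combine it with the fact that pushing forward preserves equivalence of measures, and then chain the equivalences that are already available to us.

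First I would record the elementary observation that for a Borel map $f:X\to Y$ and Borel measures $\alpha,\beta$ on $X$, if $\alpha\sim\beta$ then $f_*\alpha\sim f_*\beta$. Indeed, whenever $f_*\beta(E)=\beta(f^{-1}(E))=0$, absolute continuity gives $\alpha(f^{-1}(E))=0$, i.e. $f_*\alpha(E)=0$; thus $\alpha\ll\beta$ implies $f_*\alpha\ll f_*\beta$, and applying this in both directions yields the claim. Next I would observe that, because $p$ is a groupoid homomorphism, it commutes with the range maps: $p(r_G(x))=r_H(p(x))$ for every $x\in G$, so that $p\circ r_G=r_H\circ p$ as maps $G\to H^{(0)}$, where the second occurrence of $p$ on the left is understood as its restriction to $G^{(0)}$.

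Then I would simply concatenate equivalences. By Lemma \ref{lem:rG is measure class preserving}, $\mu^{(0)}\sim r_*(\mu)$, and pushing forward by $p$ gives $p_*(\mu^{(0)})\sim p_*(r_*(\mu))$. Functoriality of the pushforward together with the commuting square yields $p_*(r_*(\mu))=(p\circ r_G)_*(\mu)=(r_H\circ p)_*(\mu)=(r_H)_*(p_*(\mu))$. The hypothesis $p_*(\mu)\sim\nu$, pushed forward by $r_H$, gives $(r_H)_*(p_*(\mu))\sim (r_H)_*(\nu)$, and a second application of Lemma \ref{lem:rG is measure class preserving}, now to the Haar groupoid $H$, gives $(r_H)_*(\nu)\sim\nu^{(0)}$. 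Stringing these relations together delivers $p_*(\mu^{(0)})\sim\nu^{(0)}$.

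I do not anticipate a serious obstacle: the only points requiring mild care are that preservation of measure equivalence under pushforward must be applied in the correct direction on null sets, and that one must keep track of which copy of $p$---the full map on $G$ or its restriction to the unit space---appears at each stage. The genuine content has already been isolated in Lemma \ref{lem:rG is measure class preserving}, so the proposition follows formally from the commuting square and the functoriality of the pushforward.
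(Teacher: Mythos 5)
Your proof is correct and takes essentially the same route as the paper's: both arguments rest on the commuting square $p\circ r_G = r_H\circ p$, two applications of Lemma \ref{lem:rG is measure class preserving} (once to $G$ and once to $H$), and the hypothesis $p_*(\mu)\sim\nu$. The paper simply unwinds your chain of pushforward equivalences into a direct verification that $\mu^{(0)}(p^{-1}(E))=0$ if and only if $\nu^{(0)}(E)=0$ for each Borel $E\subseteq H^{(0)}$.
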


\begin{proof}
Consider the following commuting diagram:
\[
\xymatrix{G \ar [dd]_{p} \ar [rr]^{r_G} && G^{(0)} \ar [dd]_{p}\\\\
H \ar [rr]^{r_H} && H^{(0)} }
\]
Let $E \subseteq H^{(0)}$ be a Borel subset. We need to show that $\mu^{(0)}(p^{-1}(E))=0$ if and only if $\nu^{(0)}(E)=0$. Indeed, by Lemma \ref{lem:rG is measure class preserving} applied to $H$, $\nu^{(0)}(E) = 0 \Leftrightarrow \nu(r_H^{-1}(E)) = 0 \Leftrightarrow \mu(p^{-1}(r_H^{-1}(E))) = 0$. At the same time, by Lemma \ref{lem:rG is measure class preserving} applied to $G$, we have that $\mu^{(0)}(p^{-1}(E)) = 0 \Leftrightarrow \mu(r_G^{-1}(p^{-1}(E))) = 0$. Since the diagram commutes, $p^{-1}(r_H^{-1}(E)) = r_G^{-1}(p^{-1}(E))$, and it follows that $\nu^{(0)}(E) = 0 \Leftrightarrow \mu^{(0)}(p^{-1}(E)) = 0$.
\end{proof}

Having defined Haar groupoids and their appropriate maps, we are ready to define the setting for this paper and its sequels.
\begin{mydef}\label{def:category HG}
We introduce the category $\mathcal{HG}$, which has Haar groupoids as objects and homomorphisms of Haar groupoids as morphisms.
\end{mydef}

\section{The topological weak pullback}\label{sec:topological wpb}

The purpose of this paper is to construct and study the weak pullback of Haar groupoids. We start by constructing the weak pullback of \textit{topological} groupoids. We shall leave it to the reader to verify that in the case of \textit{discrete} groupoids, our notion of weak pullback reduces to the one in \cite{baez-hoffnung-walker}, which in turn generalizes the more familiar notion of pullback in the category of sets. Examples \ref{ex:open cover WPB} and \ref{ex:transformation groupoid WPB} below illustrate that the weak pullback is a natural notion.

\begin{mydef}\label{def:weak pullback}
Given the following diagram of topological groupoids and continuous homomorphisms
\[
\xymatrix{
& S \ar[dr]_{p} & & T \ar[dl]^{q} & \\
 & & G & &
}
\]
we define the \textbf{weak pullback} to be the topological groupoid
$$P = \{ (s,g,t) ~|~ s\! \in\! S, \ g\! \in\! G, \ t \!\in\! T, \; r_G(g) \!=\! r_G(p(s)) \ \text{ and } \ d_G(g) \!=\! r_G(q(t)) \}$$ together with the obvious projections $\pi_S : P \rightarrow S$ and $\pi_T : P \rightarrow T$. We describe the groupoid structure of $P$ and its topology below.
\end{mydef}

The weak pullback groupoid $P$ gives rise to the following diagram:
\[
\xymatrix{
& & P \ar[dl]_{\pi_S} \ar[dr]^{\pi_T} & &\\
& S \ar[dr]_{p} & & T \ar[dl]^{q} & \\
 & & G & &
}
\]
Observe that even at the level of sets, this diagram does \emph{not} commute. However, it is not hard to see that the weak pullback does make the following diamond commute:
\[
\xymatrix{
& & P \ar[dl]_{\pi_S} \ar[dr]^{\pi_T} & &\\
& S \ar[dr]_{\pi \circ p} & & T \ar[dl]^{\pi \circ q} & \\
 & & \underline{G} & &
}
\]
where $\pi:G \longrightarrow \underline{G}$ is the map $g \longmapsto [r(g)] = [d(g)]$. \\

Intuitively, we think of an element $(s,g,t)$ in $P$ as giving rise to the following picture in $G$:

\[
\xymatrix{
&  \ar[d]_{p(s)} & \ar[d]^{q(t)} & \\
 & & \ar[l]^{g} & &
}
\]

Composition of $(s,g,t)$ and $(\sigma,h,\tau)$ is then thought of as:

\[
\xymatrix{
&  \ar[d]_{p(\sigma)} & \ar[d]^{q(\tau)} & \\
&  \ar[d]_{p(s)} & \ar[l]^{h} \ar[d]^{q(t)} & \\
 & & \ar[l]^{g} & &
}
\]

Formally, the composable pairs of $P$ are $$ P^{(2)} = \{ (s,g,t),(\sigma,h,\tau) ~|~ r_S(\sigma)\!=\!d_S(s), r_T(\tau)\!=\!d_T(t) \text{ and }h\!=\!p(s)^{-1} gq(t) \} .$$ The product is given by $$(s,g,t)(\sigma,h,\tau) = (s \sigma ,g,t \tau),$$ and the inverse is given by $$  (s,g,t)^{-1} = (s^{-1},p(s)^{-1} gq(t),t^{-1} ).$$ Thus the range and source maps of $P$ are $$r_P(s,g,t) = (r_S(s), g, r_T(t))$$ and $$d_P(s,g,t) = (d_S(s), p(s)^{-1} gq(t), d_T(t)).$$ The unit space of $P$ is $$P^{(0)} = \{ (s,g,t) ~|~ s\in S^{(0)}, t \in T^{(0)} \text{ and } g \in G^{p(s)}_{q(t)}  \}.$$ The topology of $P$ is induced from the Cartesian product $S \times G \times T$: $$X \subseteq P \text{ is open } \Leftrightarrow \text{ there exists an open set } Z \subseteq S \times G \times T \text{ such that } X = Z \cap P.$$ The product and inverse of $P$ are continuous with respect to this topology.

\begin{rem}\label{rem:elementary open subsets of P}
Let $\{A_n\}_{n=1}^{\infty}$, $\{B_m\}_{m=1}^{\infty}$ and $\{C_k\}_{k=1}^{\infty}$ be countable bases for the topologies of $S$, $G$ and $T$ respectively. Then ${\mathcal B}=\{(A_n \times B_m \times C_k)\cap P \}_{n,m,k=1}^{\infty}$ gives a countable basis ${\mathcal B}$ for the topology of $P$, consisting of open sets of the form $E= (A \times B \times C) \cap P$, which we call \emph{elementary open sets}. Moreover, all finite intersections of sets in ${\mathcal B}$ are also of the this form.
\end{rem}

\begin{lem}\label{lem:P is LC+T2+2nd}
The groupoid $P$ is locally compact, Hausdorff and second countable.
\end{lem}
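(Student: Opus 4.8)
The plan is to realize $P$ as a subspace of the product $S \times G \times T$ and to derive each of the three properties from the corresponding property of the product, together with the way $P$ sits inside it. Throughout I will use that $S$, $G$ and $T$ are each locally compact, Hausdorff and second countable, so that their finite product $S \times G \times T$ enjoys all three properties.

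First I would dispose of second countability and the Hausdorff property, both of which pass to arbitrary subspaces. Since $S \times G \times T$ is second countable and Hausdorff, so is its subspace $P$; in fact Remark \ref{rem:elementary open subsets of P} already exhibits an explicit countable basis for $P$ consisting of elementary open sets, which settles second countability directly.

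The only substantial point is local compactness, since the subspace topology does not in general inherit this property from the ambient space. What is needed is that $P$ be locally closed in $S \times G \times T$, and I claim in fact that $P$ is \emph{closed}. To see this, consider the two continuous maps $\alpha, \beta : S \times G \times T \rightarrow G^{(0)} \times G^{(0)}$ defined by $\alpha(s,g,t) = (r_G(g), r_G(p(s)))$ and $\beta(s,g,t) = (d_G(g), r_G(q(t)))$; these are continuous because $r_G$, $d_G$, $p$ and $q$ are all continuous. By definition, $P = \alpha^{-1}(\Delta) \cap \beta^{-1}(\Delta)$, where $\Delta \subseteq G^{(0)} \times G^{(0)}$ denotes the diagonal. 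Since $G^{(0)}$ is Hausdorff, $\Delta$ is closed, and hence both $\alpha^{-1}(\Delta)$ and $\beta^{-1}(\Delta)$ are closed, so $P$ is closed.

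Finally, I would invoke the standard fact that a closed subspace of a locally compact Hausdorff space is itself locally compact. As $S \times G \times T$ is locally compact Hausdorff and $P$ is closed in it, $P$ is locally compact, completing the proof. The step I expect to be the main obstacle — indeed the only one requiring genuine argument rather than heredity of a property to subspaces — is the local compactness, and within it the verification that $P$ is closed; this is precisely the place where the Hausdorffness of $G^{(0)}$ is used.
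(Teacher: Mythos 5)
Your proof is correct and follows essentially the same route as the paper: the paper also establishes second countability via Remark \ref{rem:elementary open subsets of P}, Hausdorffness by heredity, and local compactness by exhibiting $P$ as the preimage of the diagonal under a continuous map into $G^{(0)}\times G^{(0)}\times G^{(0)}\times G^{(0)}$ (your two maps $\alpha,\beta$ are just that single map split into two factors), hence closed in $S\times G\times T$.
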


\begin{proof}
The groupoid $P$ is second countable by Remark \ref{rem:elementary open subsets of P}, and it is Hausdorff as a subspace of $S \times G \times T$. Let $$b: S\times G \times T\longrightarrow G^{(0)}\times G^{(0)}\times G^{(0)}\times G^{(0)}$$ be the continuous map given by $$(\sigma, x, \tau) \longmapsto (r_G(p(\sigma)), r_G(x), d_G(x), r_G(q(\tau))).$$ Observe that $P=b^{-1}(\Delta\times\Delta)$, where $\Delta$ is the diagonal of $G^{(0)}\times G^{(0)}$. Therefore, $P$ is closed in $S\times G \times T$, and therefore it is locally compact.
\end{proof}

The following examples show that the weak pullback of groupoids is a natural notion. A more detailed study of these examples and many others will appear in a separate paper, where we discuss the weak pullback in the context of topological and measure theoretic degroupoidification.

\begin{example}\label{ex:open cover WPB} \emph{(weak pullback of open cover groupoids)}
\end{example}
Let $X$, $Y$ and $Z$ be locally compact topological spaces, and let $p:Y \rightarrow X$ and $q:Z\rightarrow X$ be continuous, open and surjective maps. Assume that $\mathcal{U}= \{U_\alpha\}_{\alpha \in A}$ and $\mathcal{W}= \{W_\alpha\}_{\alpha \in A}$ are locally finite open covers of $Y$ and $Z$, respectively (with the same indexing set $A$), and assume that $p(U_\alpha) = q(W_\alpha)$ for every $\alpha \in A$, defining an open cover $\mathcal{V}= \{V_\alpha\}_{\alpha \in A}$ of $X$, where $V_\alpha = p(U_\alpha)$. Consider the regular pullback diagram in the category \textbf{\textit{Top}} of topological spaces and continuous functions:
\[
\xymatrix{
& & Y\!*\!Z \ar[dl]_{\pi_Y} \ar[dr]^{\pi_Z} & &\\
& Y \ar[dr]_{p} & & Z \ar[dl]^{q} & \\
 & & X & &
}
\]
where $Y\!*\!Z = \{(y,z) \in Y\!\times\!Z ~|~ p(y)=q(z)\}$. All sets of the form $(U_\alpha \!\times\! W_\beta) \cap Y\!*\!Z$ constitute an open cover of the pullback space $Y\!*\!Z$, which we will denote by $\mathcal{U}\!*\!\mathcal{W}$.

Associated to an open cover $\mathcal{U}$ of a space $Y$ is a groupoid $\GU = \{ (\alpha, y, \beta): y \in U_\alpha \cap U_\beta \}$ (called an open cover groupoid, or $\check{C}ech$ groupoid). A pair $(\alpha, y, \beta)$, $(\gamma, y', \delta)$ is composable if and only if $\beta=\gamma$ and $y=y'$, in which case their product is $(\alpha, y, \delta)$, and the inverse is given by $(\alpha, y, \beta)^{-1}= (\beta, y, \alpha)$. Let $\GU$, $\mathcal{G}_{\mathcal{W}}$ and $\mathcal{G}_{\mathcal{V}}$ be the open cover groupoids associated to the covers of $Y$, $Z$ and $X$ above, and let $\widehat{p}:\GU \rightarrow \mathcal{G}_{\mathcal{V}}$ and $\widehat{q}:\mathcal{G}_{\mathcal{W}} \rightarrow \mathcal{G}_{\mathcal{V}}$ be the induced homomorphisms, given by $\widehat{p}(\alpha,y,\beta) = (\alpha, p(y), \beta)$ and $\widehat{q}(\alpha,z,\beta) = (\alpha, q(z), \beta)$. This gives rise to a cospan diagram of groupoids, which can be completed to a weak pullback diagram:
\[
\xymatrix{
& & \mathcal{P} \ar[dl] \ar[dr] & &\\
& \GU \ar[dr]_{\widehat{p}} & & \mathcal{G}_{\mathcal{W}} \ar[dl]^{\widehat{q}} & \\
 & & \mathcal{G}_{\mathcal{V}} & &
}
\]
We omit the technical but straightforward calculations which yield the upshot: the \emph{weak pullback groupoid} $\mathcal{P}$ is isomorphic to the open cover groupoid $\mathcal{G}_{\mathcal{U}*\mathcal{W}}$ corresponding to the cover $\mathcal{U}\!*\!\mathcal{W}$ of the \emph{regular pullback space} $Y\!*\!Z$.

\begin{example}\label{ex:transformation groupoid WPB} \emph{(weak pullback of transformation groupoids)}
\end{example}
Let $X$, $Y$ and $Z$ be locally compact topological spaces, and let $p:Y \rightarrow X$ and $q:Z\rightarrow X$ be continuous maps. Let $Y\!*\!Z$ be the regular pullback in the category \textbf{\textit{Top}}, as in the previous example. Let $\Gamma$ and $\Lambda$ be locally compact groups acting on $Y$ and $Z$ respectively, and let $Y \!\times\! \Gamma$ and $Z \!\times\! \Lambda$ be the corresponding transformation groupoids. Recall that in a transformation groupoid, say $Y \!\times\! \Gamma$, the elements $(y,\gamma)$ and $(\tilde{y},\tilde{\gamma})$ are composable if and only if $\tilde{y} =y\gamma$, in which case $(y,\gamma)(y\gamma,\tilde{\gamma}) = (y,\gamma\tilde{\gamma})$. The inverse, range and domain are given by $(y,\gamma)^{-1} = (y\gamma, \gamma^{-1})$, $r(y,\gamma) = (y,e)$ and $d(y,\gamma) = (y\gamma,e)$.

We view $X$ as a transformation groupoid by endowing it with an action of the trivial group, which amounts to regarding $X$ as a cotrivial groupoid. Assume that the maps $p$ and $q$ are equivariant with respect to the group actions, i.e. $p(y \cdot \gamma) = p(y)$ and $q(z \cdot \lambda) = q(z)$. In this case $p$ and $q$ induce groupoid homomorphisms $\hat{p}:Y\!\times\!\Gamma \rightarrow X$ and $\hat{q}:Z\!\times\!\Lambda \rightarrow X$ given by $\hat{p}(y,\gamma) = p(y)$ and $\hat{q}(z,\lambda)=q(z)$. This yields a cospan diagram of topological groupoids which gives rise to the following weak pullback diagram:
\[
\xymatrix{
& & \mathcal{P} \ar[dl]_{\pi_Y} \ar[dr]^{\pi_Z} & &\\
& Y \!\times\! \Gamma \ar[dr]_{\hat{p}} & & Z\!\times\! \Lambda \ar[dl]^{\hat{q}} & \\
 & & X & &
}
\]
It is now not hard to verify that the \emph{weak pullback groupoid} $\mathcal{P}$ can be identified with the transformation groupoid $(Y\!*\!Z) \!\times\! (\Gamma \!\times\! \Lambda)$ corresponding to the action of the group $(\Gamma \!\times\! \Lambda)$ on the \emph{regular pullback space} $(Y\!*\!Z)$, given by $(y,z) \cdot (\gamma,\lambda) = (y\gamma, z\lambda)$. \\

\begin{rem}\label{rem:weak and regular pullback conincide for cotrivial G}
In general, the weak pullback coincides with a regular pullback whenever the groupoid $G$ in Definition \ref{def:weak pullback} is a cotrivial groupoid. This is the case in example \ref{ex:transformation groupoid WPB} above.
\end{rem}

The following observation will be essential in the sequel.
\begin{lem}\label{lem:fibers of P}
For any $u=(s,g,t) \in P^{(0)}$, the fiber $P^u$ is a cartesian product of the form $P^u = P^{(s,g,t)} = S^s \times \{g\} \times T^t$.
\end{lem}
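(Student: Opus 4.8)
The plan is to unfold the definition of the fiber $P^u$ using the explicit formula for the range map $r_P$, and then to check that the resulting collection of triples automatically satisfies the membership conditions that define $P$. First I would recall that $P^u = \{ x \in P : r_P(x) = u \}$ and that $r_P(s',g',t') = (r_S(s'), g', r_T(t'))$. Writing $u = (s,g,t)$, the equation $r_P(s',g',t') = (s,g,t)$ decomposes coordinatewise into the three conditions $r_S(s') = s$, $g' = g$, and $r_T(t') = t$; that is, $s' \in S^s$, $g' = g$, and $t' \in T^t$. This already exhibits $P^u$ as a subset of $S^s \times \{g\} \times T^t$.

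The only thing left to verify is the reverse inclusion, namely that every triple $(s',g,t')$ with $s' \in S^s$ and $t' \in T^t$ is genuinely an element of $P$. For this I would use that $u = (s,g,t) \in P^{(0)}$ forces $s \in S^{(0)}$, $t \in T^{(0)}$ and $g \in G^{p(s)}_{q(t)}$, i.e. $r_G(g) = p(s)$ and $d_G(g) = q(t)$. Since $p$ is a groupoid homomorphism it commutes with the range maps, so $r_G(p(s')) = p(r_S(s')) = p(s) = r_G(g)$, using $s \in S^{(0)}$ (hence $p(s) \in G^{(0)}$) in the final step. Symmetrically, $r_G(q(t')) = q(r_T(t')) = q(t) = d_G(g)$. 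Thus the two defining relations $r_G(g) = r_G(p(s'))$ and $d_G(g) = r_G(q(t'))$ of Definition \ref{def:weak pullback} hold automatically, so $(s',g,t') \in P$. Combining the two inclusions yields $P^u = S^s \times \{g\} \times T^t$ as claimed.

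I do not expect any genuine obstacle: the statement is essentially a bookkeeping consequence of the formula for $r_P$ together with the fact that $p$ and $q$ respect range maps. The single point that deserves a moment of care is the rigidity of the middle coordinate — unlike the $S$- and $T$-coordinates, which range over entire $G$-fibers $S^s$ and $T^t$, the $G$-coordinate is pinned to the fixed element $g$ — and the observation that the range conditions carving out $P$ are not extra constraints on the candidate triples but are guaranteed precisely by the hypothesis that $u$ lies in $P^{(0)}$.
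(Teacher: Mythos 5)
Your proposal is correct and follows essentially the same route as the paper's proof: unwind the formula for $r_P$ to identify $P^u$ inside $S^s \times \{g\} \times T^t$, then use that $u \in P^{(0)}$ together with $p \circ r_S = r_G \circ p$ and $q \circ r_T = r_G \circ q$ to check that every such triple satisfies the defining conditions of $P$. No gaps.
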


\begin{proof}
We follow the definitions:
\begin{eqnarray}
P^{(s,g,t)} &=& \{(\sigma, h, \tau) \in P ~|~ r_P(\sigma, h, \tau) = (s,g,t)\} \nonumber \\
&=& \{(\sigma, h, \tau) \in P ~|~ (r_S(\sigma), h, r_T(\tau)) = (s,g,t) \} \nonumber \\
&=& \{(\sigma, h, \tau) \in P ~|~ r_S(\sigma) = s, h=g, r_T(\tau) = t \} \nonumber \\
&=& \{(\sigma, h, \tau) \in P ~|~ \sigma \in S^s, h=g, \tau \in T^t \}. \nonumber
\end{eqnarray}
Note that since $(s,g,t)$ is an element of $P^{(0)}$, any $\sigma \in S^s$ satisfies $r_G(p(\sigma)) = p(r_S(\sigma)) = p(s) = p(r_S(s)) = r_G(p(s)) = r_G(g)$ and likewise any $\tau \in T^t$ satisfies $r_G(q(\tau)) = d_G(g)$. Therefore $S^s \times \{g\} \times T^t \subseteq P $ and thus
$$P^{(s,g,t)} \ = \ \{(\sigma, h, \tau) \in P ~|~ \sigma \in S^s, h=g, \tau \in T^t \} \ = \ S^s \times \{g\} \times T^t.$$
\end{proof}

\begin{prop}\label{prop:piS and piT continuous homomorphisms}
The projections $\pi_S : P \rightarrow S$ and $\pi_T : P \rightarrow T$ are continuous groupoid homomorphisms.
\end{prop}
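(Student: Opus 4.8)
The plan is to separate the two assertions---continuity on the one hand, and the groupoid-homomorphism property on the other---and dispatch them independently, since neither interacts with the other.

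For continuity, I would simply observe that $P$ carries the subspace topology inherited from the Cartesian product $S \times G \times T$, and that $\pi_S$ and $\pi_T$ are nothing but the restrictions to $P$ of the canonical coordinate projections $S \times G \times T \to S$ and $S \times G \times T \to T$. Coordinate projections out of a product are continuous, and the restriction of a continuous map to a subspace is continuous; hence $\pi_S$ and $\pi_T$ are continuous. This step is immediate and requires no computation.

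For the homomorphism property, I would verify directly from the explicit description of the groupoid structure of $P$ given above that each projection is a functor of groupoids. Concretely, it suffices to check two things for $\pi_S$ (and symmetrically for $\pi_T$): first, that composable pairs are sent to composable pairs, and second, that products are preserved; preservation of units and inverses then follows formally. For the first, if $\big((s,g,t),(\sigma,h,\tau)\big) \in P^{(2)}$, then by the definition of $P^{(2)}$ one has $r_S(\sigma) = d_S(s)$, which is exactly the condition for $(s,\sigma)$ to be composable in $S$. For the second, using the product formula $(s,g,t)(\sigma,h,\tau) = (s\sigma,g,t\tau)$, I compute $\pi_S\big((s,g,t)(\sigma,h,\tau)\big) = s\sigma = \pi_S(s,g,t)\,\pi_S(\sigma,h,\tau)$. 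The analogous computations for $\pi_T$ use the $T$-coordinate condition $r_T(\tau) = d_T(t)$ and the product $t\tau$. For completeness I would also record that $\pi_S$ and $\pi_T$ intertwine the range and source maps---e.g.\ $\pi_S(r_P(s,g,t)) = \pi_S(r_S(s),g,r_T(t)) = r_S(s) = r_S(\pi_S(s,g,t))$, and likewise for $d_P$---and that they carry $P^{(0)}$ into $S^{(0)}$ and $T^{(0)}$ respectively, both of which are read off directly from the formulas for $r_P$, $d_P$ and $P^{(0)}$.

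I do not anticipate a genuine obstacle here: every step reduces to reading off the definitions. The only point demanding a modicum of care is to keep track of the composability convention---membership in $P^{(2)}$ bundles together three conditions ($r_S(\sigma) = d_S(s)$, $r_T(\tau) = d_T(t)$, and $h = p(s)^{-1} g q(t)$), and one must note that the constraint on the $G$-coordinate $h$ is irrelevant to the $S$- and $T$-projections, so that composability of the projected pairs depends only on the respective first or second condition. With that observed, the verification is entirely routine.
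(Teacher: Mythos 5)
Your proposal is correct and follows essentially the same route as the paper: continuity via the subspace/product topology (the paper spells this out as $\pi_S^{-1}(A) = (A\times G\times T)\cap P$, which is the same observation), and the homomorphism property by direct computation with the product formula $(s,g,t)(\sigma,h,\tau)=(s\sigma,g,t\tau)$. The only cosmetic difference is that the paper also explicitly verifies preservation of inverses, whereas you correctly note this follows formally once composability and products are preserved.
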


\begin{proof}
The proof is straightforward. For continuity, let $A \subseteq S$ be an open subset. Then  $\pi_S^{-1}(A)$ is open in $P$ since $\pi_S^{-1}(A) = \{ (s,g,t) \in P ~|~ \pi_S(s,g,t) \in A \} = \{ (s,g,t) \in P ~|~ s \in A \} = (A \times G \times T) \cap P$. Now take $((s,g,t),(\sigma,h,\tau)) \in P^{(2)}$. Then $\pi_S ((s,g,t)(\sigma,h,\tau)) = \pi_S(s \sigma ,g,t \tau) = s \sigma = \pi_S (s,g,t) \pi_S (\sigma,h,\tau)$. Also, $\pi_S((s,g,t)^{-1}) = \pi_S(s^{-1},p(s)^{-1} gq(t),t^{-1} ) = s^{-1} = (\pi_S(s,g,t))^{-1}$. Thus $\pi_S$ is a groupoid homomorphism. The proof for $\pi_T$ is similar.
\end{proof}

\section{A Haar system for the weak pullback}\label{sec:Haar system for wpb}

We now assume that $S$, $G$ and $T$ are Haar groupoids and that the maps $p$ and $q$ are homomorphisms of Haar groupoids. In order to define the weak pullback of the following diagram in the category $\mathcal{HG}$, we let $P$ be the weak pullback of the underlying diagram of topological groupoids, as defined above.
\[
\begin{array}{cccc}
\xymatrix{& & P \ar[dl] \ar[dr] & \\ \scriptstyle{\lambda_S^{\bullet}, \ \mu_S^{(0)}} \!\!\!\!\!\!\!\!\!\!\!\!\!\!\! & S \ar[dr]_p & & T \ar[dl]^q
& \!\!\!\!\!\!\!\!\!\!\!\! \scriptstyle{\lambda_T^{\bullet}, \ \mu_T^{(0)}} \\
& & G & \!\!\!\!\!\!\!\!\!\!\!\!\! \scriptstyle{\lambda_G^{\bullet}, \ \mu_G^{(0)}} \\}
\end{array}
\]

Our goal is to construct a Haar groupoid structure on $P$. We start by defining the Haar system $\lambda_P^{\bullet}$. From Lemma \ref{lem:fibers of P} we know that the $r$-fibers of $P$ are cartesian products of the form $P^u = P^{(s,g,t)} = S^s \times \{g\} \times T^t$. In light of this it is reasonable to propose the following definition.

\begin{mydef}\label{def:Haar for P}
Let $u = (s,g,t) \in P^{(0)}$. Define $$\lambda_P^{u} = \lambda_P^{(s,g,t)} := \lambda_S^{s} \times \delta_g \times \lambda_T^{t}.$$ We denote $\lambda_P^{\bullet} = \{ \lambda_P^{u} \}_{u \in P^{(0)}}$.
\end{mydef}

\begin{theorem}\label{thm:Haar system for P}
The system $\lambda_P^{\bullet}$ is a continuous left Haar system for $P$.
\end{theorem}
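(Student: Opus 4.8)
The plan is to check, one at a time, the conditions of Definition \ref{def:Haar system}: that $\lambda_P^{\bullet}$ is a system of measures on the range map $r_P : P \to P^{(0)}$, that it is positive on open sets, that it is continuous, and that it is left invariant. The first two are short, left invariance is a direct computation, and the continuity is the one genuinely analytic step. To begin, since $\lambda_S^{\bullet}$ and $\lambda_T^{\bullet}$ are Haar systems we have $\mathrm{supp}(\lambda_S^s) = S^s$ and $\mathrm{supp}(\lambda_T^t) = T^t$, so $\lambda_P^{(s,g,t)} = \lambda_S^s \times \delta_g \times \lambda_T^t$ is a Radon measure concentrated on $S^s \times \{g\} \times T^t$, which by Lemma \ref{lem:fibers of P} is exactly $P^{(s,g,t)}$; this makes $\lambda_P^{\bullet}$ a system of measures on $P$. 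For positivity on open sets, take an open $A \subseteq P$ meeting $P^u$ at a point $(\sigma, g, \tau)$ and shrink to an elementary open set $E = (A' \times B' \times C') \cap P \subseteq A$ containing it (Remark \ref{rem:elementary open subsets of P}). Since $\lambda_P^u$ lives on $P^u$, $\lambda_P^u(E) = \lambda_S^s(A')\,\delta_g(B')\,\lambda_T^t(C')$, where $\delta_g(B') = 1$ as $g \in B'$, and $\lambda_S^s(A') > 0$, $\lambda_T^t(C') > 0$ because $A'$ meets $S^s$ and $C'$ meets $T^t$ and $\lambda_S^{\bullet}, \lambda_T^{\bullet}$ are positive on open sets. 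Thus $\lambda_P^u(A) \ge \lambda_P^u(E) > 0$.

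For continuity, fix $0 \le f \in C_c(P)$; by Fubini, $\int_P f \, d\lambda_P^{(s,g,t)} = \int_S \int_T f(\sigma, g, \tau)\, d\lambda_T^t(\tau)\, d\lambda_S^s(\sigma)$, and I must show this is continuous in $(s,g,t)$ over $P^{(0)}$. Rather than establish joint continuity of this twisted three-fold integral by hand, I would reduce to the product theorem for continuous systems of measures from \cite{BSM}. Since $P$ is closed in $S \times G \times T$ (Lemma \ref{lem:P is LC+T2+2nd}) and this product space is metrizable, hence normal, Tietze's theorem lets me extend $f$ to a compactly supported $\tilde f \in C_c(S \times G \times T)$ (extend continuously, then cut off by a bump equal to $1$ on $\mathrm{supp}(f)$). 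Because $\lambda_P^u$ is concentrated on $P^u \subseteq P$, where $\tilde f = f$, we get $\int_P f \, d\lambda_P^u = \int_{S \times G \times T} \tilde f \, d(\lambda_S^s \times \delta_g \times \lambda_T^t)$. Now $\lambda_S^{\bullet}$ and $\lambda_T^{\bullet}$ are CSMs and the Dirac family $\{\delta_g\}_{g \in G}$ is trivially a CSM on $\mathrm{id}_G$, so the product CSM theorem makes $(s,g,t) \mapsto \int \tilde f\, d(\lambda_S^s \times \delta_g \times \lambda_T^t)$ continuous on $S^{(0)} \times G \times T^{(0)}$; its restriction to $P^{(0)}$ is then continuous, as required. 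I expect this reduction --- pushing the $\delta_g$-twisted integral over the closed subspace $P$ out to the ambient product system so the \cite{BSM} machinery applies --- to be the main obstacle; once set up, the rest is invocation.

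Left invariance is a direct check that I would reduce to measurable rectangles $E = E_S \times E_G \times E_T$, since these generate the Borel $\sigma$-algebra of $S \times G \times T$ and both sides of the invariance identity are measures. For $x = (s,g,t)$, one has $P^{d_P(x)} = S^{d_S(s)} \times \{p(s)^{-1} g q(t)\} \times T^{d_T(t)}$, and left translation by $x$ carries it onto $P^{r_P(x)}$ via $(\sigma, h, \tau) \mapsto (s\sigma, g, t\tau)$. Feeding a rectangle into $\lambda_P^{r_P(x)}\big(x \cdot (E \cap P^{d_P(x)})\big)$ factors as a product of three contributions: the $S$- and $T$-factors become $\lambda_S^{d_S(s)}(E_S)$ and $\lambda_T^{d_T(t)}(E_T)$ by the left invariance of $\lambda_S^{\bullet}$ and $\lambda_T^{\bullet}$, while the middle Dirac factor matches $\delta_{p(s)^{-1} g q(t)}(E_G)$. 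The product is precisely $\lambda_P^{d_P(x)}(E)$, which is the required identity $\lambda_P^{d_P(x)}(E) = \lambda_P^{r_P(x)}(x \cdot (E \cap P^{d_P(x)}))$. Assembling the four verifications gives that $\lambda_P^{\bullet}$ is a continuous left Haar system for $P$.
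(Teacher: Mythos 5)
Most of your argument tracks the paper's own proof closely: the identification of $\lambda_P^{(s,g,t)}$ as a measure concentrated on $P^{(s,g,t)} = S^s\times\{g\}\times T^t$, positivity on open sets via elementary open sets, and above all the continuity step --- Tietze extension of $f$ to $\tilde f\in C_c(S\times G\times T)$ followed by continuity of $(s,g,t)\mapsto\int\tilde f\,d(\lambda_S^s\times\delta_g\times\lambda_T^t)$ on the ambient product $S^{(0)}\times G\times T^{(0)}$ --- is exactly the route taken in Proposition \ref{prop:lambdaP is a CSM}. (The paper assembles your ``product CSM theorem'' by applying Lemma 4.5 of \cite{BSM} three times, once per factor; it deliberately avoids the fibred-product CSM result, Proposition 5.4 of \cite{BSM}, because $\underline{G}$ need not be Hausdorff and $S*T$ need not be locally compact. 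Your move of working over the full Cartesian product sidesteps this in the same way, so this part is sound.) Your rectangle computation for left invariance also agrees with the paper's computation on elementary open sets.

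The gap is in the extension of left invariance from rectangles to arbitrary Borel sets. You assert that it suffices to check the identity on measurable rectangles ``since these generate the Borel $\sigma$-algebra and both sides of the invariance identity are measures.'' Two measures agreeing on a generating $\pi$-system need not agree on the $\sigma$-algebra without a finiteness hypothesis: the Dynkin/monotone-class argument requires $\sigma$-finiteness with respect to the generating class, and general rectangles $E_S\times E_G\times E_T$ can have infinite $\lambda_P^u$-measure (e.g.\ when $\lambda_S^s(E_S)=\infty$). Moreover, you have not verified that the right-hand side $\nu(E)=\lambda_P^{r_P(x)}\bigl(x\cdot(E\cap P^{d_P(x)})\bigr)$ is even a locally finite measure; countable additivity is easy, but local finiteness requires an argument (the paper uses continuity of the left-translation map $w\mapsto x\cdot w$ on $P^{d_P(x)}$ to pull back a finite-measure neighborhood of $xy$ to a neighborhood of $y$). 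The paper spends roughly half of Proposition \ref{prop:left invariance of lambda P} on precisely this: showing both sides are locally finite measures and then invoking Lemma 2.24 of \cite{BSM}, the uniqueness statement for locally finite measures agreeing on all finite intersections of a countable basis of elementary open sets. Your proof needs this (or an equivalent $\sigma$-finiteness argument restricted to, say, open rectangles with compact closure) to be complete; as written, the passage from rectangles to Borel sets does not follow.
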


\begin{proof}
The proof will rely on the technology developed in \cite{BSM}. We consider the following three pullback diagrams in the category \textbf{\textit{Top}} of topological spaces and continuous functions (i.e. we temporarily forget the algebraic structures of the groupoids involved, and view them only as topological spaces. Likewise all groupoid homomorphisms are regarded only as continuous functions):

\[
\begin{array}{cc}
\mathbf{Diagram \ A} \ \ \ \ & \xymatrix{G^{(0)} * G^{(0)} \ar [dd]\ar [rr]&& G^{(0)} \ar [dd]^{v \mapsto [v]}\\\\
G^{(0)}\ar [rr]_{u \mapsto [u]}&&\underline{G}}
\end{array}
\]

\[
\begin{array}{cc}
\mathbf{Diagram \ B} \ \ \ \ & \xymatrix{S^{(0)} * T^{(0)} \ar [dd]\ar [rr]&& T^{(0)} \ar [dd]^{t \mapsto [q(t)]}\\\\
S^{(0)}\ar [rr]_{s \mapsto [p(s)]}&&\underline{G}}
\end{array}
\]

\[
\begin{array}{cc}
\mathbf{Diagram \ C} \ \ \ \ \ \ \ & \xymatrix{S * T \ar [dd]\ar [rr]&& T \ar [dd]^{\tau \mapsto [q(r(\tau))]}\\\\
S\ar [rr]_{\sigma \mapsto [p(r(\sigma))]}&&\underline{G}}
\end{array}
\]
Note that in order to lighten notation, we denote the pullback object, for example in Diagram C, by $S * T$ in place of $S *_{\underline{G}} T$. By definition $$S * T = S *_{\underline{G}} T = \{ (\sigma,\tau) \in S \times T ~|~ [p(r(\sigma))] = [q(r(\tau))] \text { in } \underline{G} \} $$ and the maps to $S$ and $T$ are the obvious projections. The topology of $S * T$ is the restriction of the product topology on $S \times T$.

Using $G^{(0)} * G^{(0)}$, $S^{(0)} * T^{(0)}$ and $S * T$, we can now construct two more pullback diagrams (still in \textbf{\textit{Top}}). Our identifications of the pullback objects in Diagrams D and E with $P^{(0)}$ and $P$, respectively, are justified below. A moment's reflection reveals that the maps in these diagrams are well defined.

\[
\begin{array}{cc}
\ \ \ \ \ \ \ \mathbf{Diagram \ D} \ \ \ \ & \ \ \ \ \xymatrix{P^{(0)} \ar [dd]\ar [rr]&& S^{(0)} * T^{(0)} \ar [dd]^{(s,t) \mapsto (p(s),q(t))}\\\\
G\ar [rr]_{\hspace{-.3in} x \mapsto (r(x),d(x))}&&G^{(0)} * G^{(0)}}
\end{array}
\]

\[
\begin{array}{cc}
\ \ \ \ \ \ \ \ \ \ \ \ \ \mathbf{Diagram \ E} \ \ \ \ & \ \ \ \ \ \ \ \xymatrix{P \ar [dd]\ar [rr]&& S * T \ar [dd]^{(\sigma,\tau) \mapsto (p(r(\sigma)),q(r(\tau)))}\\\\
G\ar [rr]_{\hspace{-.3in} x \mapsto (r(x),d(x))}&&G^{(0)} * G^{(0)}}
\end{array}
\]

\noindent In Diagram D we identified the pullback object $G *_{G^{(0)} *_{\underline{G}} G^{(0)}} (S^{(0)} *_{\underline{G}} T^{(0)})$ with $P^{(0)}$. Indeed, \begin{eqnarray}
G *_{G^{(0)} *_{\underline{G}} G^{(0)}} (S^{(0)} *_{\underline{G}} T^{(0)}) &=& \{ (g, (s,t)) ~|~ (r_G(g),d_G(g)) = (p(s),q(t)) \}\nonumber \\
&=& \{ (g, (s,t)) ~|~ r_G(g) = p(s) \text{ and } d_G(g) = q(t) \} \nonumber \\
&=& \{ (g, (s,t)) ~|~  g \in G^{p(s)}_{q(t)} \} \nonumber
\end{eqnarray}
which can obviously be identified, as sets, with our definition of $P^{(0)}$. Moreover, the topology on the pullback is precisely that of  $P^{(0)}$, namely the induced topology from $S^{(0)}\! \times \!G \!\times\! T^{(0)}$. Similarly, in Diagram E we identified the pullback object $G *_{G^{(0)} *_{\underline{G}} G^{(0)}} (S *_{\underline{G}} T)$ with $P$. Indeed,
\begin{eqnarray}
G *_{G^{(0)} *_{\underline{G}} G^{(0)}} (S *_{\underline{G}} T) &=& \{ (g, (s,t)) ~|~ (r_G(g),d_G(g)) = (p(r_S(s)),q(r_T(t))) \}\nonumber \\
&=& \{ (g, (s,t)) ~|~ r_G(g) = p(r_S(s)) \text{ and } d_G(g) = q(r_T(t)) \} \nonumber
\end{eqnarray}
which can be identified with our definition of $P$, as sets as well as in \textbf{\textit{Top}}.

Henceforth, we shall follow Section 5 of \cite{BSM}, where we studied fibred products of systems of measures. Observe that the results we invoke at this point from \cite{BSM} only require spaces to be $\mathbf{T_1}$ and second countable. The spaces we consider all satisfy these hypotheses. Using Diagram C as the front face and Diagram B as the back face, we construct the following fibred product diagram:
\[
\xy 0;<.2cm,0cm>:
(20,20)*{S^{(0)} * T^{(0)}}="1";
(40,20)*{T^{(0)}}="2";
(20,0)*{S^{(0)}}="3";
(40,0)*{\underline{G}}="4";
(10,10)*{S * T}="5";
(30,10)*{T}="6";
(10,-10)*{S}="7";
(30,-10)*{\underline{G}}="8";
{"1"+CR+(.5,0);"2"+CL+(-.5,0)**@{-}?>*{\dir{>}}};
{"5"+CR+(.5,0);"6"+CL+(-.5,0)**@{-}?>*{\dir{>}}};
{"3"+CR+(.5,0);"4"+CL+(-10,0)**@{-}};
{"3"+CR+(9,0);"4"+CL+(-.5,0)**@{-}?>*{\dir{>}}?>(.5)+(0,1)*{\scriptstyle [p]}};
{"7"+CR+(.5,0);"8"+CL+(-.5,0)**@{-}?>*{\dir{>}}?>(.5)+(0,-1)*{\scriptstyle [p\circ r_{S}]}};
{"1"+CD+(0,-.5);"3"+CU+(0,9.6)**@{-}} ;
{"1"+CD+(0,-9.8);"3"+CU+(0,.5)**@{-}?>*{\dir{>}}} ;
{"2"+CD+(0,-.5);"4"+CU+(0,.5)**@{-}?>*{\dir{>}} ?>(.5)+(1,0)*{\scriptstyle [q]}} ;
{"5"+CD+(0,-.5);"7"+CU+(0,.5)**@{-}?>*{\dir{>}}} ;
{"6"+CD+(0,-.5);"8"+CU+(0,.5)**@{-}?>*{\dir{>}} ?>(.75)+(-2.3,0)*{\scriptstyle [q\circ r_{T}]}} ;
{"5"+C+(1.4,1.4);"1"+C+(-1.4,-1.4)**@{-}?>*{\dir{>}}?>(.5)+(-2.3,0)*{\scriptstyle r_S * r_T}} ;
{"6"+C+(1.4,1.4);"2"+C+(-1.4,-1.4)**@{-}?>*{\dir{>}} ?>(.5)+(-1.5,0)*{\scriptstyle r_T}?>(.5)+(2,0)*{\scriptstyle \lambda_T^{\bullet}}} ;
{"7"+C+(1.4,1.4);"3"+C+(-1.4,-1.4)**@{-}?>*{\dir{>}} ?>(.5)+(-1.5,0)*{\scriptstyle r_S}?>(.5)+(2,0)*{\scriptstyle \lambda_S^{\bullet}}} ;
{"8"+C+(1.4,1.4);"4"+C+(-1.4,-1.4)**@{-}?>*{\dir{>}}};
\endxy
\]
The connecting maps are the range maps $r_T$ and $r_S$, and they are endowed respectively with the Haar systems $\lambda_T^{\bullet}$ and $\lambda_S^{\bullet}$, which are continuous systems of measures and therefore locally finite (see Corollary 2.15 of \cite{BSM}). It is immediate to see that the compatibility conditions on the maps of the bottom and the right faces are satisfied. The map $r_S * r_T : S * T \rightarrow S^{(0)} * T^{(0)}$ is defined by $(r_S * r_T)(s,t)=(r_S(s), r_T(t))$. By Definition 5.1 and Proposition 5.2 of \cite{BSM}, we obtain a locally finite system of measures
$(\lambda_S * \lambda_T)^{\bullet}$ on $r_S * r_T,$ where $$(\lambda_S * \lambda_T)^{(s,t)} = \lambda_S^s \times \lambda_T^t.$$ Moreover,
by Proposition 5.5 of \cite{BSM} it is positive on open sets.

With this at hand, we construct another fibred product diagram. We take Diagram E as the front face and Diagram D as the back face, and use $r_S * r_T$ and $id:G \rightarrow G$ as the connecting maps. The map $r_S * r_T$ is equipped with the above locally finite system of measures $(\lambda_S * \lambda_T)^{\bullet}$, whereas the identity map on $G$ naturally admits the system $\delta^{\bullet}$ of Dirac masses, which is trivially locally finite:
\[
\xy 0;<.2cm,0cm>:
(20,20)*{P^{(0)}}="1";
(40,20)*{S^{(0)} * T^{(0)}}="2";
(20,0)*{G}="3";
(40,0)*{G^{(0)} * G^{(0)}}="4";
(10,10)*{P}="5";
(30,10)*{S * T}="6";
(10,-10)*{G}="7";
(30,-10)*{G^{(0)} * G^{(0)}}="8";
{"1"+CR+(.5,0);"2"+CL+(-.5,0)**@{-}?>*{\dir{>}}};
{"5"+CR+(.5,0);"6"+CL+(-.5,0)**@{-}?>*{\dir{>}}};
{"3"+CR+(.5,0);"4"+CL+(-6.5,0)**@{-}?>(.5)+(0,-1)*{\scriptstyle{(r,d)}}};
{"3"+CR+(10,0);"4"+CL+(-.5,0)**@{-}?>*{\dir{>}}};
{"7"+CR+(.5,0);"8"+CL+(-.5,0)**@{-}?>*{\dir{>}}?>(.5)+(0,-1)*{\scriptstyle (r,d)}};
{"1"+CD+(0,-.5);"3"+CU+(0,9.6)**@{-}} ;
{"1"+CD+(0,-9.8);"3"+CU+(0,.5)**@{-}?>*{\dir{>}}} ;
{"2"+CD+(0,-.5);"4"+CU+(0,.5)**@{-}?>*{\dir{>}}?>(.5)+(1.45,0)*{\scriptstyle p * q}} ;
{"5"+CD+(0,-.5);"7"+CU+(0,.5)**@{-}?>*{\dir{>}}} ;
{"6"+CD+(0,-.5);"8"+CU+(0,.5)**@{-}?>*{\dir{>}}?>(.5)+(4,4)*{\scriptstyle (p\circ r) * (q\circ r)}} ;
{"5"+C+(1.4,1.4);"1"+C+(-1.4,-1.4)**@{-}?>*{\dir{>}}?>(.5)+(0,1)*{\scriptstyle r_P}} ;
{"6"+C+(1.4,1.4);"2"+C+(-1.4,-1.4)**@{-}?>*{\dir{>}}?>(.5)+(-3,0)*{\scriptstyle r_S * r_T}?>(.5)+(4,0)*{\scriptstyle (\lambda_S * \lambda_T)^{\bullet}}} ;
{"7"+C+(1.4,1.4);"3"+C+(-1.4,-1.4)**@{-}?>*{\dir{>}}?>(.5)+(-1.5,0)*{\scriptstyle id}?>(.5)+(1.5,0)*{\scriptstyle \delta^{\bullet}}} ;
{"8"+C+(1.4,1.4);"4"+C+(-1.4,-1.4)**@{-}?>*{\dir{>}}} ;
\endxy
\]
It is again easy to see that the compatibility conditions on the maps of the bottom and the right faces are satisfied. Note that in this last diagram we have identified the map from $P$ to $P^{(0)}$ with $r_P$, the range map of $P$.

Resorting once again to Definition 5.1 and Proposition 5.2 of \cite{BSM}, we obtain a locally finite system of measures $(\delta * (\lambda_S *
\lambda_T))^{\bullet}$ on $r_P : P \rightarrow P^{(0)}$, where $$(\delta *(\lambda_S *\lambda_T))^{(g,s,t)} = \delta_g \times (\lambda_S * \lambda_T)^{(s,t)} = \delta_g \times \lambda_S^s \times \lambda_T^t.$$ We denote this system of measures on $r_P$ by $\lambda_P^{\bullet}$. Yielding to the original convention of writing elements of $P$ as $(s,g,t)$ rather than $(g,s,t)$, we write $\lambda_P^{(s,g,t)} = \lambda_S^s \times \delta_g \times \lambda_T^t$. Our construction of $\lambda_P^{\bullet}$ as a fibred product of the systems $\delta^{\bullet}$ and $(\lambda_S *\lambda_T)^{\bullet}$, which are locally finite and positive on open sets, guarantees (by Propositions 5.2 and 5.5 of \cite{BSM}) that $\lambda_P^{\bullet}$ inherits these properties.

Recall that as we have pointed out in the preliminaries, $\underline{G}$ need not be a Hausdorff space in general. Moreover, $S * T$, for example, need not be locally compact, as it is not necessarily closed in $S \times T$. The assumption that all spaces are locally compact and Hausdorff is essential in the CSM setting in \cite{BSM}. For this reason we cannot simply use Proposition 5.4 of \cite{BSM} to deduce that as fibred products, $(\lambda_S * \lambda_T)^{\bullet}$ and subsequently $\lambda_P^{\bullet}$ are CSMs. Thus, we present a separate direct proof that $\lambda_P^{\bullet}$ is a CSM in Proposition \ref{prop:lambdaP is a CSM} below. Furthermore, at this point we return to viewing $P$, $G$, $S$ and $T$ as groupoids, and in Proposition \ref{prop:left invariance of lambda P} we state and prove that $\lambda_P^{\bullet}$ is left invariant. We conclude that $\lambda_P^{\bullet}$ is a continuous left Haar system for the groupoid $P$.
\end{proof}

\begin{prop}\label{prop:lambdaP is a CSM}
The system $\lambda_P^{\bullet}$ is a continuous system of measures.
\end{prop}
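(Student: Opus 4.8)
The plan is to prove directly that the map $u=(s,g,t)\mapsto \int_P f\,d\lambda_P^{u}$ is continuous on $P^{(0)}$ for every $0\le f\in C_c(P)$, without invoking Proposition 5.4 of \cite{BSM} (which is unavailable precisely because $\underline{G}$ and $S*T$ need not be locally compact). The idea is to reduce a general $f$, which is not a product function, to finite sums of elementary tensors $a\otimes b\otimes c$, for which the integral factors completely and continuity follows from the fact that $\lambda_S^{\bullet}$ and $\lambda_T^{\bullet}$ are themselves CSMs, together with the continuity of the evaluation $b\mapsto b(g)$.

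First I would carry out a reduction. Given $0\le f\in C_c(P)$ with compact support $K$, I note that $P$ is closed in the metrizable, hence normal, space $S\times G\times T$ by Lemma \ref{lem:P is LC+T2+2nd}. The Tietze extension theorem then yields a continuous extension of $f$ to $S\times G\times T$, and after multiplying by a compactly supported cutoff equal to $1$ on $K$ I obtain $\tilde f\in C_c(S\times G\times T)$ with $\tilde f|_P=f$ and $\mathrm{supp}(\tilde f)\subseteq L:=L_S\times L_G\times L_T$ for some compact product set. Since $\lambda_P^{(s,g,t)}=\lambda_S^{s}\times\delta_g\times\lambda_T^{t}$ is concentrated on $P$, for any $h\in C_c(S\times G\times T)$ one has $\int_P h|_P\,d\lambda_P^{(s,g,t)}=\int_{S\times G\times T}h\,d(\lambda_S^{s}\times\delta_g\times\lambda_T^{t})$, so the quantity to control is $\int_{S\times G\times T}\tilde f\,d(\lambda_S^{s}\times\delta_g\times\lambda_T^{t})$. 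Next I would approximate: by the Stone--Weierstrass theorem the algebra generated by tensors $a\otimes b\otimes c$ is uniformly dense in $C(L)$, so for each $\varepsilon>0$ I can find a finite sum $F_\varepsilon=\sum_i a_i\otimes b_i\otimes c_i$ with $a_i\in C_c(S)$, $b_i\in C_c(G)$, $c_i\in C_c(T)$, all supported in $L$, and $\|\tilde f-F_\varepsilon\|_\infty<\varepsilon$. For a single tensor, Fubini gives
$$\int_{S\times G\times T}(a\otimes b\otimes c)\,d(\lambda_S^{s}\times\delta_g\times\lambda_T^{t})=\Big(\int_S a\,d\lambda_S^{s}\Big)\,b(g)\,\Big(\int_T c\,d\lambda_T^{t}\Big),$$
which is continuous in $(s,g,t)$: the outer factors are continuous because $\lambda_S^{\bullet}$ and $\lambda_T^{\bullet}$ are CSMs (extended to all of $C_c$ by linearity), and $b(g)$ is continuous in $g$. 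Hence $(s,g,t)\mapsto\int F_\varepsilon\,d(\lambda_S^{s}\times\delta_g\times\lambda_T^{t})$ is continuous on $P^{(0)}$.

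Finally I would upgrade the uniform approximation of functions to a uniform approximation of the parametrized integrals. Since $\lambda_S^{\bullet}$ and $\lambda_T^{\bullet}$ are CSMs they are locally bounded (Lemma 2.11 and Proposition 2.23 of \cite{BSM}); covering the compact sets $L_S$ and $L_T$ by finitely many neighborhoods on which the respective systems are bounded produces constants with $\lambda_S^{s}(L_S)\le C_S$ and $\lambda_T^{t}(L_T)\le C_T$ for all $s,t$, whence
$$\lambda_P^{(s,g,t)}(L)=\lambda_S^{s}(L_S)\,\mathbf{1}_{L_G}(g)\,\lambda_T^{t}(L_T)\le C_S C_T$$
uniformly in $(s,g,t)\in P^{(0)}$. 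Therefore $\big|\int\tilde f\,d\lambda_P^{u}-\int F_\varepsilon\,d\lambda_P^{u}\big|\le\varepsilon\,C_S C_T$ for every $u$, so $u\mapsto\int_P f\,d\lambda_P^{u}$ is a uniform limit of continuous functions and is therefore continuous. The hard part will be exactly this last, uniform-in-$u$ step: one must secure the uniform mass bound on $L$ (the substitute for the local-compactness hypothesis that fails for $\underline{G}$) and arrange the Stone--Weierstrass approximants to be compactly supported inside a fixed product set, so that uniform closeness of the integrands transfers to uniform closeness of the integrals.
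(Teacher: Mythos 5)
Your proposal is correct, but it takes a genuinely different route from the paper's. Both proofs begin identically: extend $f$ by Tietze to $\tilde f\in C_c(S\times G\times T)$ and exploit the fact that $\lambda_P^{(s,g,t)}=\lambda_S^{s}\times\delta_g\times\lambda_T^{t}$ is concentrated on the fiber $S^s\times\{g\}\times T^t\subseteq P$, so that the whole computation can be carried out on the ambient product $S\times G\times T$ and then restricted to $P^{(0)}$ --- this is exactly how both arguments sidestep the failure of local compactness of $\underline{G}$ and $S*T$. From there the paper integrates out the three factors one at a time, applying Lemma 4.5 of \cite{BSM} (a ``partial integration against a CSM preserves $C_c$'' statement) successively to $\lambda_T^{\bullet}$, $\delta^{\bullet}$ and $\lambda_S^{\bullet}$, landing directly in $C_c(S^{(0)}\times G\times T^{(0)})$. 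You instead approximate $\tilde f$ uniformly by finite sums of elementary tensors via Stone--Weierstrass, observe that for a tensor the integral factors into three visibly continuous pieces, and pass to the limit using the uniform mass bound $\lambda_P^{(s,g,t)}(L)\le C_SC_T$ coming from local boundedness of CSMs. Your version is more self-contained --- it uses only the definition of a CSM plus Lemma 2.11/Proposition 2.23 of \cite{BSM} --- at the cost of the approximation bookkeeping you correctly flag as the delicate part: one must enlarge $L$ to a compact product neighborhood, run Stone--Weierstrass there, and multiply by a tensor-product cutoff so the approximants stay compactly supported inside a fixed product set while remaining uniformly close to $\tilde f$ everywhere; with that arranged, the uniform bound does transfer closeness of integrands to closeness of the parametrized integrals, and the argument closes. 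The paper's version is shorter given the machinery of \cite{BSM}; yours would be preferable if one wanted to avoid citing Lemma 4.5.
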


\begin{proof}
From the definition of a CSM, in order to prove that $\lambda_P^{\bullet}$ is a CSM on $r_P:P \rightarrow P^{(0)}$, we need to show that for any $0 \leq f\in C_c(P)$, the map $ (s,g,t) \mapsto \int_P f(\sigma,x,\tau)d \lambda_P^{(s,g,t)} (\sigma,x,\tau)$ is a continuous function on $P^{(0)}$.

Let $0 \leq f\in C_c(P)$. Recall from the proof of Lemma \ref{lem:P is LC+T2+2nd} that $P$ is closed in $S\!\times\! G \!\times\! T$. By Tietze's Extension Theorem, there exists a function $F\in C(S\!\times\! G \!\times\! T)$ such that $F|_{_P}=f$. Since we can multiply $F$ by a function $\varphi \in C_c(S\!\times\! G \!\times\! T)$ which satisfies $\varphi =1$ on $K = supp(f)$, we can assume, without loss of generality, that $F \in C_c(S\!\times\! G \!\times\! T)$.

We now resort to (symmetric versions of) Lemma 4.5 in \cite{BSM}. First we take $\mathcal{X} \!=\! S \!\times\! G$, $\mathcal{Y} \!=\! T$, $\mathcal{Z} \!=\! T^{(0)}$ and $\gamma^{\bullet}\! = \!\lambda_T^{\bullet}$, to deduce that the function $F_1$ defined by $ (\sigma,x,t) \mapsto \int_T F(\sigma,x,\tau)d \lambda_T^t (\tau)$ is in $C_c(S \!\times\! G \!\times\! T^{(0)})$. Next, taking $\mathcal{X} \!=\! S \!\times\! T^{(0)}$, $\mathcal{Y} \!=\! G$, $\mathcal{Z} \!=\! G$ and $\gamma^{\bullet} \!=\! \delta^{\bullet}$, we get that the function $F_2$ defined by $ (\sigma,g,t) \mapsto \int_G F_1(\sigma,x,t)d \delta_g (x)$ is in $C_c(S \!\times\! G \!\times\! T^{(0)})$. Finally, with $\mathcal{X} \!=\! G \!\times\! T^{(0)}$, $\mathcal{Y} \!=\! S$, $\mathcal{Z} \!=\! S^{(0)}$ and $\gamma^{\bullet} \!= \! \lambda_S^{\bullet}$, Lemma 4.5 of \cite{BSM} implies that the function $F_3$ defined by $ (s,g,t) \mapsto \int_S F_2(\sigma,g,t)d \lambda_S^{s} (\sigma)$ is in $C_c(S^{(0)}\! \times \!G \!\times\! T^{(0)})$. Merging these results, we can rewrite the function $F_3$ by $$(s,g,t)\longmapsto \int_S\int_G\int_T F(\sigma,x,\tau)\ d\lambda_T^t(\tau)d\delta_g(x)\lambda_S^s(\sigma). $$

Note that in the above integral $r_S(\sigma) = s$ and $r_T(\tau) = t$, since $supp(\lambda_S^{s}) = r_S^{-1}(s)$ and $supp(\lambda_T^{t}) = r_T^{-1}(t)$.
Therefore, if we take $(s,g,t) \in P^{(0)}$, in which case $p(s) = r_G(g)$ and $q(t) = d_G(g)$, we get that $p(r_S(\sigma)) = r_G(g)$ and $q(r_T(\tau)) = d_G(g)$. In other words, when restricting $F_3$ to $P^{(0)}$, we are actually integrating over $P$. Recalling the definition of $\lambda_P^{\bullet}$ and that $F|_{_P}=f$, we retrieve precisely the function $(s,g,t) \mapsto \int_P f(\sigma,x,\tau)d \lambda_P^{(s,g,t)} (\sigma,x,\tau)$, which is continuous on $P^{(0)}$ as a restriction of a continuous function on $S^{(0)}\! \times \!G \!\times\! T^{(0)}$.
\end{proof}

\begin{prop}\label{prop:left invariance of lambda P}
The system $\lambda^{\bullet}_P$ is left invariant.
\end{prop}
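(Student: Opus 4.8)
The plan is to verify the defining identity for left invariance directly from the definition recalled in the preliminaries: for every $x \in P$ and every Borel set $E \subseteq P$ one must show
$$\lambda_P^{d(x)}(E) = \lambda_P^{r(x)}\bigl(x \cdot (E \cap P^{d(x)})\bigr).$$
First I would fix $x = (s,g,t) \in P$ and unwind the relevant data. Its range and source are $r_P(x) = (r_S(s), g, r_T(t))$ and $d_P(x) = (d_S(s), p(s)^{-1}gq(t), d_T(t))$, so by Lemma \ref{lem:fibers of P} the fiber over the source is $P^{d(x)} = S^{d_S(s)} \times \{p(s)^{-1}gq(t)\} \times T^{d_T(t)}$; in particular every element of $P^{d(x)}$ has the same fixed middle coordinate $h_0 := p(s)^{-1}gq(t)$. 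Checking the composability condition against the product formula then shows that left translation by $x$ is the map $L_x : P^{d(x)} \to P^{r(x)}$ given by $(\sigma, h_0, \tau) \mapsto (s\sigma, g, t\tau)$: the $S$-coordinate is left-translated by $s$, the $T$-coordinate by $t$, and the middle coordinate is replaced by $g$, which is exactly the middle coordinate of $r_P(x)$.

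Next I would reduce to a computation on rectangles. Since $\lambda_P^{d(x)}$ is concentrated on $P^{d(x)}$, the left-hand side is unchanged upon replacing $E$ by $E \cap P^{d(x)}$. Both sides of the desired identity are Borel measures in the variable $E$: the left-hand side manifestly, and the right-hand side because $E \mapsto E \cap P^{d(x)}$ and the translation bijection $L_x$ each preserve countable disjoint unions. Because $\lambda_S^s$ and $\lambda_T^t$ are Radon and hence $\sigma$-finite on our spaces, the product measures $\lambda_P^{d(x)}$ and $\lambda_P^{r(x)}$ are $\sigma$-finite, so by the uniqueness theorem for measures it suffices to verify the identity on the generating $\pi$-system of Borel rectangles $E = A \times B \times C$ with $A \subseteq S$, $B \subseteq G$, $C \subseteq T$.

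For such a rectangle the product structure of Definition \ref{def:Haar for P} gives
$$\lambda_P^{d(x)}(A \times B \times C) = \lambda_S^{d_S(s)}(A)\,\delta_{h_0}(B)\,\lambda_T^{d_T(t)}(C).$$
On the other side, $(A \times B \times C) \cap P^{d(x)}$ equals $\bigl(A \cap S^{d_S(s)}\bigr) \times (\{h_0\}\cap B) \times \bigl(C \cap T^{d_T(t)}\bigr)$, and applying $L_x$ turns this into $\bigl(s\cdot(A\cap S^{d_S(s)})\bigr) \times (\{g\} \text{ or } \emptyset) \times \bigl(t\cdot(C\cap T^{d_T(t)})\bigr)$, where the middle factor is $\{g\}$ precisely when $h_0 \in B$. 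Evaluating $\lambda_P^{r(x)} = \lambda_S^{r_S(s)} \times \delta_g \times \lambda_T^{r_T(t)}$ on this set and factoring, the two outer factors match the left-hand side by the left invariance of the Haar systems $\lambda_S^\bullet$ and $\lambda_T^\bullet$, while the middle Dirac factor contributes $1$ exactly when $h_0 \in B$ in both expressions. This yields equality on rectangles and hence, by the reduction above, on all Borel sets.

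I expect the only real obstacle to be bookkeeping rather than depth: one must track carefully how the middle coordinate shifts from $\delta_{h_0}$ to $\delta_g$ under $L_x$ while the outer coordinates are genuinely left-translated, and one must be slightly careful in justifying the reduction to rectangles, namely that the right-hand side really is a measure in $E$ and that $\sigma$-finiteness licenses the uniqueness argument. The conceptual content is simply that left invariance of $\lambda_P^\bullet$ decouples into the left invariance of $\lambda_S^\bullet$ and $\lambda_T^\bullet$ on the two outer factors, with the Dirac mass in the middle handled trivially.
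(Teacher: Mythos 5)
Your proof is correct and follows essentially the same route as the paper: both verify the left-invariance identity by a direct computation on product rectangles, using Lemma \ref{lem:fibers of P} to decouple everything into the left invariance of $\lambda_S^{\bullet}$ and $\lambda_T^{\bullet}$ with a Dirac mass in the middle, and then extend to all Borel sets by a uniqueness-of-measures argument. The only divergence is in that extension step: the paper proves that $E\mapsto\lambda_P^{r_P(x)}\bigl(x\cdot(E\cap P^{d_P(x)})\bigr)$ is a \emph{locally finite} measure (using continuity of left multiplication to produce finite neighborhoods) and then applies Lemma 2.24 of \cite{BSM} to a countable basis of elementary open sets, whereas you invoke $\sigma$-finiteness together with the classical $\pi$-system uniqueness theorem on Borel rectangles --- both are valid, and your version slightly shortens the argument because the rectangle identity itself supplies the finite exhausting sequence, making the separate local-finiteness verification unnecessary.
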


\begin{proof}
From the definition of left invariance, we need to show that
\begin{equation}\label{eq:left inv for P}
\lambda_P^{d_P(x)} (E) = \lambda_P^{r_P(x)}\left(x \cdot (E \cap P^{d_P(x)})\right),
\end{equation}
for every $x \in P$ and for every Borel subset $E \subseteq P$.

Assume first that $E$ is a set of the form $E\!= \!(A \times B \times C) \cap P$, where $A \!\subseteq \!S$, $B \!\subseteq \! G$ and $C\! \subseteq \! T$. Let $x=(\sigma,y,\tau) \in P$, so $r_P(x) = (r_S(\sigma),y,r_T(\tau))$ and $d_P(x) = (d_S(\sigma),p(\sigma)^{-1}yq(\tau),d_T(\tau))$. We will denote $z = p(\sigma)^{-1}yq(\tau)$. We calculate the left and right hand sides of (\ref{eq:left inv for P}) separately. On the one hand we get:
\begin{eqnarray}
\lambda_P^{d_P(x)} (E) &=& \lambda_P^{d_P(x)} \left((A \times B \times C) \cap P^{d_P(x)}\right) \qquad \text{ since } \lambda_P^{d_P(x)} \text{ is concentrated on } P^{d_P(x)} \nonumber \\
&=& \lambda_P^{d_P(x)} \left((A \times B \times C) \cap (S^{d_S(\sigma)} \times \{z\} \times T^{d_T(\tau)})\right) \qquad \text{ by Lemma \ref{lem:fibers of P}} \nonumber \\
&=& \lambda_P^{d_P(x)} \left((A \cap S^{d_S(\sigma)}) \times (B \cap \{z\}) \times (C \cap T^{d_T(\tau)})\right) \nonumber \\
&=& \lambda_S^{d_S(\sigma)} (A \cap S^{d_S(\sigma)}) \cdot \delta_z (B \cap \{z\}) \cdot \lambda_T^{d_T(\tau)} (C \cap T^{d_T(\tau)}) \nonumber \\
&=& \lambda_S^{d_S(\sigma)} (A) \cdot \delta_z (B) \cdot \lambda_T^{d_T(\tau)} (C) \nonumber
\end{eqnarray}
On the other hand,
\begin{eqnarray}
\lambda_P^{r_P(x)}\left(x \cdot (E \cap P^{d_P(x)})\right) &=& \lambda_P^{r_P(x)} \left((\sigma,y,\tau) \cdot \left((A \times B \times C) \cap P^{d_P(x)}\right)\right) \nonumber \\
&=& \lambda_P^{r_P(x)} \left((\sigma,y,\tau) \cdot \left((A \cap S^{d_S(\sigma)}) \times (B \cap \{z\}) \times (C \cap T^{d_T(\tau)})\right)\right) \nonumber
\end{eqnarray}
By the definition of $P^{(2)}$, note that $(\sigma,y,\tau) \cdot \left((A \cap S^{d_S(\sigma)}) \times (B \cap \{z\}) \times (C \cap T^{d_T(\tau)})\right)$ can be nonempty only when $z = p(\sigma)^{-1}yq(\tau) \in B$, in which case
the middle component of the product is $\{y\}$. Hence
\begin{eqnarray}
&=& \begin{cases} \lambda_P^{r_P(x)} \left(\sigma \cdot (A \cap S^{d_S(\sigma)}) \times  \{y\} \times \tau \cdot (C \cap T^{d_T(\tau)})\right) & z \in B \\ \lambda_P^{r_P(x)} (\emptyset) & z \notin B \end{cases} \nonumber \\
&=& \begin{cases} \lambda_S^{r_S(\sigma)} \left(\sigma \cdot (A \cap S^{d_S(\sigma)})\right) \cdot \delta_y (\{y\}) \cdot \lambda_T^{r_T(\tau)} \left( \tau \cdot (C \cap T^{d_T(\tau)})\right) & z \in B \\ 0 & z \notin B \end{cases} \nonumber \\
&=& \begin{cases} \lambda_S^{d_S(\sigma)} (A) \cdot \lambda_T^{d_T(\tau)} (C) & z \in B \\ 0 & z \notin B \end{cases} \qquad \text{ by the left invariance of } \lambda_S^{\bullet} \text{ and } \lambda_T^{\bullet} \nonumber \\
&=& \lambda_S^{d_S(\sigma)} (A) \cdot \delta_z (B)  \cdot \lambda_T^{d_T(\tau)} (C) \nonumber
\end{eqnarray}
Thus (\ref{eq:left inv for P}) holds for any set $E$ of the form $E= (A \times B \times C) \cap P$.

Fix $x\in P$, and for any Borel subset $E$ of $P$ define
$$\mu(E)=\lambda_P^{d_P(x)} (E) \qquad \text{and} \qquad \nu(E)=\lambda_P^{r_P(x)}\left(x \cdot (E \cap P^{d_P(x)})\right).$$ We claim that $\mu$ and $\nu$ are both locally finite measures on $P$. Since $\lambda^{\bullet}_P$ is a CSM, it is a locally finite BSM by Proposition 2.23 of \cite{BSM}. Hence $\lambda_P^{u}$ is a locally finite measure for any $u \in P^{(0)}$, and in particular $\mu=\lambda_P^{d_P(x)}$ is a locally finite measure.

We turn to $\nu$. It is trivial that $\nu(\emptyset) = 0$. Let $\{E_i\}_{i=1}^{\infty}$ be a countable collection of disjoint Borel subsets of $P$.
$$\nu(\bigcup_{i=1}^{\infty}E_{i}) \ = \ \lambda_P^{r_P(x)}(x \cdot ((\bigcup_{i=1}^{\infty}E_{i}) \cap P^{d_P(x)})) \ = \ \lambda_P^{r_P(x)}(x \cdot (\bigcup_{i=1}^{\infty}(E_{i} \cap P^{d_P(x)}))) \ = $$$$ = \ \lambda_P^{r_P(x)}(\bigcup_{i=1}^{\infty}x \cdot(E_{i} \cap P^{d_P(x)})) \ = \ \sum_{i=1}^{\infty}\lambda_P^{r_P(x)}\left(x \cdot\left(E_{i} \cap P^{d_P(x)}\right)\right) \ = \ \sum_{i=1}^{\infty}\nu(E_{i}).$$ Therefore $\nu$ is countably additive, and hence a measure. In order to prove that $\nu$ is locally finite we need to show that every $y\in P$ admits an open neighborhood $U_y$ such that $\nu(U_y)<\infty$. In the case where $y\notin P^{d_P(x)}$, the open set $U_y=P\setminus P^{d_P(x)}$ satisfies $\nu(U_y)=\lambda_P^{r_P(x)}\left(x \cdot (U_y \cap P^{d_P(x)})\right)=\lambda_P^{r_P(x)}\left(\emptyset\right)=0<\infty$. Now assume that $y\in P^{d_P(x)}$. In this case the product $z=xy$ is well defined, and since $\lambda_P^{r_P(x)}$ is a locally finite measure, there exists an open neighborhood $U_z$ of $z$ such that $\lambda_P^{r_P(x)}(U_z)<\infty$. The map $P^{d_P(x)}\rightarrow P$ defined by $w\mapsto x\cdot w$ is continuous, hence there exists an open neighborhood $U_y$ of $y$ such that $x\cdot \left(U_y\cap P^{d_P(x)}\right)\subset U_z$. Consequently, $\nu(U_y)=\lambda_P^{r_P(x)}\left(x \cdot (U_y \cap P^{d_P(x)})\right)\leq \lambda_P^{r_P(x)}\left(U_z\right)<\infty$.

Finally, let ${\mathcal B}$ be a countable basis for the topology of $P$ consisting of elementary open sets, as in Remark \ref{rem:elementary open subsets of P}.  As we have just shown, elementary open sets satisfy (\ref{eq:left inv for P}), hence $\mu$ and $\nu$ agree on all finite intersections of sets in ${\mathcal B}$. We can now invoke Lemma 2.24 of \cite{BSM}, which states that if $\mu$ and $\nu$ are two locally finite measures on a space $X$, and there exists a countable basis ${\mathcal B}$ for the topology of $X$ such that $\mu(U_{1}\cap U_{2}\cap\dots\cap U_{n})=\nu(U_{1}\cap U_{2}\cap\dots\cap U_{n})$ for any $\{U_{1},U_{2},\dots, U_{n}\}\subset {\mathcal B}$, $n\geq 1$, then $\mu(E) = \nu(E)$ for any Borel subset $E \subseteq X$. Applying Lemma 2.24 of \cite{BSM} to $\mu$, $\nu$ and ${\mathcal B}$ above completes the proof.
\end{proof}

\section{A measure on the unit space of the weak pullback}\label{sec:measure for wpb}

We return to the weak pullback diagram. Our next task is to construct a measure $\mu_P^{(0)}$ on $P^{(0)}$, and for starters we will need to have certain systems of measures $\gamma_p^{\bullet}$ and $\gamma_q^{\bullet}$ on the maps $p$ and $q$, respectively. These systems of measures arise via a disintegration theorem, as we explain below.
\[
\begin{array}{cccc}
\xymatrix{& & P \ar[dl] \ar[dr] & \!\!\!\!\!\!\!\!\!\!\!\!\!\!\!\!\!\!\!\!\!\!\! \scriptstyle{\lambda_P^{\bullet}} \\ \scriptstyle{\lambda_S^{\bullet}, \ \mu_S^{(0)}} \!\!\!\!\!\!\!\!\!\!\!\!\!\!\! & S \ar[dr]_{p,\gamma_p^{\bullet}}
& & T \ar[dl]^{q,\gamma_q^{\bullet}}
& \!\!\!\!\!\!\!\!\!\!\!\! \scriptstyle{\lambda_T^{\bullet}, \ \mu_T^{(0)}} \\ & & G & \!\!\!\!\!\!\!\!\!\!\!\!\! \scriptstyle{\lambda_G^{\bullet}, \ \mu_G^{(0)}} \\}
\end{array}
\]

Let $(X,\mu)$ and $(Y,\nu)$ be measure spaces, and let $f:X \rightarrow Y$ be a Borel map. A system of measures $\gamma^{\bullet}$ on $f$ will be called a \textit{disintegration} (\cite{BSM}, Definition 6.2) of $\mu$ with respect to $\nu$ if $\displaystyle \mu (E) = \int_Y \gamma^y (E) d\nu (y)$ for every Borel set $E \subseteq X$. A \textit{disintegration theorem} gives sufficient conditions which guarantee the existence of such a disintegration, and the version we will use appears as Corollary 6.6 of \cite{BSM}. It requires $\mu$ to be locally finite (and $\sigma$-finite), $\nu$ to be $\sigma$-finite, and $f:X \rightarrow Y$ to be measure class preserving. Under these conditions there exists a locally finite BSM $\gamma^{\bullet}$ on $f$ which is a disintegration of $\mu$ with respect to $\nu$.

Each of the Haar groupoids $S$, $G$ and $T$ is equipped with a Radon (hence locally finite and $\sigma$-finite) measure on its unit spaces, which is quasi-invariant with respect to its Haar system. The maps $p$ and $q$ are homomorphisms of Haar groupoids, therefore $p:S^{(0)} \rightarrow G^{(0)}$ and $q:T^{(0)} \rightarrow G^{(0)}$ are measure class preserving. These ingredients allow us to invoke Corollary 6.6 of \cite{BSM}, and to obtain locally finite BSMs $\gamma_p^{\bullet}$ on $p:S^{(0)} \rightarrow G^{(0)}$ which is a disintegration of $\mu_S^{(0)}$ with respect to $\mu_G^{(0)}$, and $\gamma_q^{\bullet}$ on $q:T^{(0)} \rightarrow G^{(0)}$ which is a disintegration of $\mu_T^{(0)}$ with respect to $\mu_G^{(0)}$.

The following requirement will be essential for our proof of Proposition \ref{prop:muP0 is Radon} below, which states that the measure $\mu_P^{(0)}$ which we are constructing is locally finite.

\begin{ass}\label{ass:LB disintegration}
We will henceforth assume that the disintegration systems $\gamma_p^{\bullet}$ and $\gamma_q^{\bullet}$ can be taken to be locally bounded.
\end{ass}

\begin{rem}\label{rem:LB disintegration}
By Lemma 2.11 of \cite{BSM}, a CSM is always locally bounded. Therefore, an appropriate disintegration theorem that produces a system which is either a CSM or at least locally bounded would have allowed us to remove Assumption \ref{ass:LB disintegration}.
\end{rem}

Continuous (hence locally bounded) disintegrations are abundant: Examples include disintegrations of Lebesgue measures along maps from $\mathbb{R}^n$ to $\mathbb{R}^m$, as well as fiber bundles that admit a continuous disintegration of a measure on the total space with respect to a measure on the base space. Seda shows that more general constructions of fiber spaces also host continuous disintegrations, see Theorem 3.2 of \cite{Seda80}. In our context, a Haar system is of course a continuous disintegration of the induced measure with respect to the measure on the unit space. A very general result (see Theorem 5.43 of \cite{muhly-book-unpublished}, which is a corollary of Theorem 3.3 of \cite{blanchard}) states that any continuous and \emph{open} map $f:X \rightarrow Y$ between second countable locally compact Hausdorff spaces, admits a continuous system of measures $\gamma^{\bullet}$. In particular this implies that if $\nu$ is a measure on $Y$ and we \emph{define} the measure $\mu$ on $X$ via $\gamma^{\bullet}$ by $\displaystyle \mu (E) = \int_Y \gamma^y (E) d\nu (y)$, then $\gamma^{\bullet}$ is a continuous disintegration of $\mu$ with respect to $\nu$.

The next step is to construct a BSM on the projection $\pi_G:P^{(0)} \rightarrow G$, using $\gamma_p^{\bullet}$ and $\gamma_q^{\bullet}$.

\begin{prop}\label{prop:eta}
The projection $\pi_G:P^{(0)} \rightarrow G$ admits a locally finite BSM $\eta^{\bullet}$, given by $$ \eta^x = \gamma_p^{r(x)} \times \delta_x \times \gamma_q^{d(x)} .$$
\end{prop}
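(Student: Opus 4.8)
The plan is to build $\eta^{\bullet}$ as an iterated fibred product of systems of measures, following the template used for $\lambda_P^{\bullet}$ in Theorem \ref{thm:Haar system for P}, and then to read off that it is a locally finite BSM from Definition 5.1 and Proposition 5.2 of \cite{BSM}. The decisive simplification relative to Theorem \ref{thm:Haar system for P} is that we need only a \emph{locally finite BSM}, not a CSM; consequently the failure of $\underline{G}$ to be Hausdorff and of the various fibred products to be locally compact---which forced the separate direct argument of Proposition \ref{prop:lambdaP is a CSM}---is irrelevant here, and the results of Section 5 of \cite{BSM}, whose hypotheses demand only second countable $\mathbf{T_1}$ spaces, apply directly. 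I note that, unlike the construction of $\mu_P^{(0)}$ to follow, this proposition does \emph{not} require Assumption \ref{ass:LB disintegration}: the local finiteness of $\gamma_p^{\bullet}$ and $\gamma_q^{\bullet}$ guaranteed by the disintegration theorem (Corollary 6.6 of \cite{BSM}) already suffices.

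First I would form the product system $(\gamma_p * \gamma_q)^{\bullet}$ on the map $p*q : S^{(0)} * T^{(0)} \to G^{(0)} * G^{(0)}$, $(s,t) \mapsto (p(s),q(t))$, using the fibred product cube with Diagram B as front face and Diagram A as back face. The two connecting maps carrying measures are $p : S^{(0)} \to G^{(0)}$ and $q : T^{(0)} \to G^{(0)}$, equipped respectively with $\gamma_p^{\bullet}$ and $\gamma_q^{\bullet}$ (the remaining connecting map $\underline{G} \to \underline{G}$ being the identity), and the compatibility conditions hold because Diagrams A and B are genuine fibred products over $\underline{G}$. Since $\gamma_p^{\bullet}, \gamma_q^{\bullet}$ are locally finite BSMs, Proposition 5.2 of \cite{BSM} yields a locally finite BSM $(\gamma_p * \gamma_q)^{\bullet}$ on $p*q$ with $(\gamma_p * \gamma_q)^{(u,v)} = \gamma_p^u \times \gamma_q^v$.

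Next I would pull this system back to $\pi_G$. Diagram D exhibits $\pi_G : P^{(0)} \to G$ as the pullback of $p*q$ along $(r,d) : G \to G^{(0)} * G^{(0)}$, $x \mapsto (r(x),d(x))$. Mirroring the passage from $(\lambda_S * \lambda_T)^{\bullet}$ to $\lambda_P^{\bullet}$ in Theorem \ref{thm:Haar system for P}, I form the fibred product, via Diagram D, of the locally finite BSM $(\gamma_p * \gamma_q)^{\bullet}$ on $p*q$ with the Dirac system $\delta^{\bullet}$ on $id_G$ (which is trivially a locally finite BSM). Proposition 5.2 of \cite{BSM} then produces a locally finite BSM on $\pi_G$ whose fiber over $x \in G$ is $\delta_x \times (\gamma_p * \gamma_q)^{(r(x),d(x))} = \delta_x \times \gamma_p^{r(x)} \times \gamma_q^{d(x)}$. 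Reverting to the convention of writing elements of $P$ as $(s,g,t)$ rather than $(g,s,t)$, this fiber is precisely $\eta^x = \gamma_p^{r(x)} \times \delta_x \times \gamma_q^{d(x)}$, which is concentrated on $p^{-1}(r(x)) \times \{x\} \times q^{-1}(d(x)) = \pi_G^{-1}(x)$; hence $\eta^{\bullet}$ is a system of measures on $\pi_G$.

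The remaining verifications are routine: the compatibility conditions for both cubes hold because the faces are genuine fibred products (over $\underline{G}$ and over $G^{(0)} * G^{(0)}$, as established in Theorem \ref{thm:Haar system for P}), $\delta^{\bullet}$ is trivially locally finite, and the propagation of local finiteness is part of Proposition 5.2. I foresee no genuine obstacle; the one thing to get right is the bookkeeping of the two cubes and the observation that the factor $\delta_x$ in the stated formula is produced automatically by the Dirac system on $id_G$, which fixes the $G$-coordinate in each fiber of $\pi_G$. One could instead bypass the formalism altogether, checking directly that $x \mapsto \eta^x(E) = \gamma_p^{r(x)}(A)\,\chi_B(x)\,\gamma_q^{d(x)}(C)$ is Borel for every elementary set $E = (A \times B \times C) \cap P^{(0)}$---each factor being a Borel function of $r(x)$, of $x$, or of $d(x)$---and extending to all Borel $E$ via Lemma 2.24 of \cite{BSM}, with local finiteness checked on a neighborhood basis; but the fibred product route is shorter and matches the style of Theorem \ref{thm:Haar system for P}.
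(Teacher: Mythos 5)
Your argument is correct and is essentially the paper's own proof: both first form the locally finite BSM $(\gamma_p*\gamma_q)^{\bullet}$ on $p*q$ from the fibred-product cube built on Diagrams A and B (Proposition 5.2 of \cite{BSM}), and then transfer it across Diagram D to obtain $\eta^x=\gamma_p^{r(x)}\times\delta_x\times\gamma_q^{d(x)}$ on $\pi_G$, the only difference being that the paper packages this second step as the lift $((r,d)^*(\gamma_p*\gamma_q))^{\bullet}$ via Definition 4.1, Remark 4.2 and Proposition 4.4 of \cite{BSM} rather than as a second fibred product against the Dirac system $\delta^{\bullet}$ on $id:G\rightarrow G$, which yields the identical system. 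Your observations that Assumption \ref{ass:LB disintegration} is not needed here and that the CSM and local-compactness issues are irrelevant for a mere BSM match the paper exactly; the only small slip is in your closing aside, where Lemma 2.24 of \cite{BSM} (which concerns equality of two locally finite measures agreeing on a basis) is not the right tool for extending Borel measurability of $x\mapsto\eta^x(E)$ from elementary sets to arbitrary Borel sets (a monotone class argument would be needed), but since you do not rely on that alternative route this does not affect the proof.
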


\begin{proof}
We form the following fibred product diagram in the category \textbf{\textit{Top}}, with Diagram B as the front face and Diagram A as the back face. The connecting maps are $p:S^{(0)} \rightarrow G^{(0)}$ and $q:T^{(0)} \rightarrow G^{(0)}$, equipped with the locally finite BSMs $\gamma_p^{\bullet}$ and $\gamma_q^{\bullet}$ constructed above. The compatibility conditions on the maps of the bottom and the right faces are easily seen to be satisfied.
\[
\xy 0;<.2cm,0cm>:
(20,20)*{G^{(0)} * G^{(0)}}="1";
(40,20)*{G^{(0)}}="2";
(20,0)*{G^{(0)}}="3";
(40,0)*{\underline{G}}="4";
(10,10)*{S^{(0)} * T^{(0)}}="5";
(30,10)*{ T^{(0)}}="6";
(10,-10)*{S^{(0)}}="7";
(30,-10)*{\underline{G}}="8";
{"1"+CR+(.5,0);"2"+CL+(-.5,0)**@{-}?>*{\dir{>}}};
{"5"+CR+(.5,0);"6"+CL+(-.5,0)**@{-}?>*{\dir{>}}};
{"3"+CR+(.5,0);"4"+CL+(-10.2,0)**@{-}};
{"3"+CR+(9,0);"4"+CL+(-.5,0)**@{-}?>*{\dir{>}}};
{"7"+CR+(.5,0);"8"+CL+(-.5,0)**@{-}?>*{\dir{>}}?>(.5)+(0,-1)*{\scriptstyle [p]}};
{"1"+CD+(0,-.5);"3"+CU+(0,9.6)**@{-}} ;
{"1"+CD+(0,-9.8);"3"+CU+(0,.5)**@{-}?>*{\dir{>}}} ;
{"2"+CD+(0,-.5);"4"+CU+(0,.5)**@{-}?>*{\dir{>}}};
{"5"+CD+(0,-.5);"7"+CU+(0,.5)**@{-}?>*{\dir{>}}};
{"6"+CD+(0,-.5);"8"+CU+(0,.5)**@{-}?>*{\dir{>}}?>(.75)+(-1,0)*{\scriptstyle [q]}} ;
{"5"+C+(1.4,1.4);"1"+C+(-1.4,-1.4)**@{-}?>*{\dir{>}}?>(.5)+(-1.75,0)*{\scriptstyle p * q}} ;
{"6"+C+(1.4,1.4);"2"+C+(-1.4,-1.4)**@{-}?>*{\dir{>}}?>(.5)+(-1.5,0)*{\scriptstyle q}?>(.5)+(1.5,0)*{\scriptstyle \gamma_q^{\bullet}}} ;
{"7"+C+(1.4,1.4);"3"+C+(-1.4,-1.4)**@{-}?>*{\dir{>}}?>(.5)+(-1.5,0)*{\scriptstyle p}?>(.5)+(1.5,0)*{\scriptstyle \gamma_p^{\bullet}}} ;
{"8"+C+(1.4,1.4);"4"+C+(-1.4,-1.4)**@{-}?>*{\dir{>}}} ;
\endxy
\]
We point out that the results we use from \cite{BSM} throughout this proof do not require spaces to be locally compact and Hausdorff. By Proposition 5.2 in \cite{BSM}, we obtain from the above diagram the locally finite BSM $(\gamma_p * \gamma_q)^{\bullet}$ on $p * q : S^{(0)} * T^{(0)} \rightarrow G^{(0)} * G^{(0)},$ where $$(\gamma_p * \gamma_q)^{(u,v)} = \gamma_p^u \times \gamma_q^v.$$

Next, we consider the following pullback diagram in \textbf{\textit{Top}} (this was Diagram D in the proof of Theorem \ref{thm:Haar system for P}). We equip the map $p*q$ with the BSM $(\gamma_p * \gamma_q)^{\bullet}$:
\[ \xymatrix{P^{(0)} \ar [dd]_{\pi_G} \ar [rr]&& S^{(0)} * T^{(0)} \ar [dd]^{(\gamma_p * \gamma_q)^{\bullet}}_{p * q}\\\\
G\ar [rr]^{(r,d)}&&G^{(0)} * G^{(0)}}
\]
We follow Section 4 of \cite{BSM}, where we studied lifting of systems of measures. By Definition 4.1, Remark 4.2 and Proposition 4.4 of \cite{BSM}, we can lift the locally finite BSM $(\gamma_p * \gamma_q)^{\bullet}$ and obtain a locally finite BSM $((r,d)^*(\gamma_p * \gamma_q))^{\bullet}$ on the projection $\pi_G:P^{(0)} \rightarrow G$. We denote $\eta^{\bullet} = ((r,d)^*(\gamma_p * \gamma_q))^{\bullet}$, and from the definition of lifting it follows that for $x \in G$, $\eta^x = \delta_x \times (\gamma_p * \gamma_q)^{(r(x),d(x))} = \delta_x \times \gamma_p^{r(x)} \times \gamma_q^{d(x)}$, which we rewrite as $ \eta^x = \gamma_p^{r(x)} \times \delta_x \times \gamma_q^{d(x)}.$ This completes the proof.
\end{proof}

\begin{lem}\label{lem:eta^x(E)}
Let $E \subseteq P^{(0)}$ be a set of the form $E=(A\times B\times C)\cap P^{(0)}$, where $A\subseteq S^{(0)}$, $B\subseteq G$ and $C\subseteq T^{(0)}$. For any $x \in G$, $$\eta^x (E) = \gamma_p^{r(x)}(A)\delta_x(B)\gamma_q^{d(x)}(C).$$
\end{lem}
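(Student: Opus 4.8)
The plan is to read $\eta^x$ off as an honest product measure from Proposition~\ref{prop:eta}, and then to exploit that this product is concentrated on $P^{(0)}$, so that intersecting a box $A\times B\times C$ with $P^{(0)}$ leaves its measure unchanged. In this sense the lemma is just the explicit product-measure formula applied to an elementary set, once one accounts for the superfluous restriction to $P^{(0)}$.

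First I would recall from Proposition~\ref{prop:eta} that $\eta^x=\gamma_p^{r(x)}\times\delta_x\times\gamma_q^{d(x)}$, which a priori is a product measure $m$ on $S^{(0)}\times G\times T^{(0)}$; the lifting construction realizes it as a measure carried by $P^{(0)}\subseteq S^{(0)}\times G\times T^{(0)}$. The elementary product-measure identity then gives $m(A\times B\times C)=\gamma_p^{r(x)}(A)\,\delta_x(B)\,\gamma_q^{d(x)}(C)$.

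Next I would check that $m$ is concentrated on $P^{(0)}$. Since $\gamma_p^{\bullet}$ is a system of measures on $p:S^{(0)}\to G^{(0)}$, each $\gamma_p^{r(x)}$ is concentrated on $p^{-1}(r(x))$, and likewise $\gamma_q^{d(x)}$ on $q^{-1}(d(x))$, while the factor $\delta_x$ forces the middle coordinate to equal $x$. Hence $m$-almost every triple has the form $(s,x,t)$ with $p(s)=r(x)$ and $q(t)=d(x)$, i.e.\ $x\in G^{p(s)}_{q(t)}$, so that $(s,x,t)\in P^{(0)}$. Therefore $m\big((S^{(0)}\times G\times T^{(0)})\setminus P^{(0)}\big)=0$.

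Combining these, $\eta^x(E)=m\big((A\times B\times C)\cap P^{(0)}\big)=m(A\times B\times C)=\gamma_p^{r(x)}(A)\,\delta_x(B)\,\gamma_q^{d(x)}(C)$, the middle equality holding because the portion of $A\times B\times C$ lying outside $P^{(0)}$ is $m$-null. I do not expect a serious obstacle here: the only step requiring care is the concentration claim, which reduces to the defining property that each fibre measure of a system of measures on a map is supported over the appropriate fibre.
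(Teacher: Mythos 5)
Your proposal is correct and uses essentially the same ingredients as the paper's proof: the product form of $\eta^x$ from Proposition~\ref{prop:eta} together with the fact that $\delta_x$, $\gamma_p^{r(x)}$ and $\gamma_q^{d(x)}$ are concentrated on $\{x\}$, $p^{-1}(r(x))$ and $q^{-1}(d(x))$ respectively, which forces the product measure to be carried by $P^{(0)}$. The paper merely packages this as a case split on $x\in B$ versus $x\notin B$ and strips the intersection with $P^{(0)}$ coordinate by coordinate, whereas you observe once that the complement of $P^{(0)}$ is null for the product measure; the two arguments are interchangeable.
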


\begin{proof}
From the definition of $\eta^{\bullet}$ in Proposition \ref{prop:eta} above, we have that $\eta^x(E) = (\gamma_p^{r(x)} \times \delta_x \times \gamma_q^{d(x)})\left((A\times B\times C)\cap P^{(0)}\right).$ Clearly if $x \notin B$ then $\eta^x(E) = 0$. If $x \in B$ then, since $\delta_x$ is concentrated on $\{x\}$, we can write $\eta^x(E) = (\gamma_p^{r(x)} \times \delta_x \times \gamma_q^{d(x)})\left((A\times \{x\} \times C)\cap P^{(0)}\right).$ A point $(s,x,t) \in P^{(0)}$ whose $G$ component is $x$, satisfies $s \in p^{-1}(r(x))$ and $t \in q^{-1}(d(x))$, hence for $x \in B$ we have $\eta^x(E) = \gamma_p^{r(x)}\left(A \cap p^{-1}(r(x))\right) \cdot \delta_x \left(\{x\}\right) \cdot \gamma_q^{d(x)}\left(C \cap q^{-1}(d(x))\right).$ Since $supp(\gamma_p^{r(x)}) = p^{-1}(r(x))$ and $supp(\gamma_q^{d(x)}) = q^{-1}(d(x))$, it follows that for $x \in B$, $\eta^x(E) = \gamma_p^{r(x)}(A) \delta_x \left(\{x\}\right) \gamma_q^{d(x)}(C).$ We conclude that for any $x \in G$, $\eta^x(E) = \gamma_p^{r(x)}(A) \delta_x (B) \gamma_q^{d(x)}(C).$
\end{proof}

We can now cook up a measure $\mu_P^{(0)}$ on $P^{(0)}$. The ingredients will be the induced measure $\mu_G$ from Definition \ref{def:induced measure muG}, as well as $\eta^{\bullet}$ which we have just constructed.
\begin{mydef}
Let $B \subseteq P^{(0)}$ be a Borel subset. Define:
$$\mu_P^{(0)} (B) := \int_G \eta^x (B) d\mu_G(x).$$
\end{mydef}
\noindent In fact, the measure $\mu_P^{(0)}$ can be written as $$\mu_P^{(0)}=\mu_G\circ[(r,d)^*(\gamma_p*\gamma_q)],$$ as it was obtained by lifting the fibred product of the disintegrations $\gamma_p$ and $\gamma_q$ to $\pi_G:P^{(0)} \rightarrow G$ and then composing with the induced measure of $G$.

In order for $P$ to be a Haar groupoid, $\mu_P^{(0)}$ must be a Radon measure, and in particular locally finite. This is guaranteed modulo our standing Assumption \ref{ass:LB disintegration}.

\begin{prop}\label{prop:muP0 is Radon}
$\mu_P^{(0)}$ is a Radon measure on $P^{(0)}$.
\end{prop}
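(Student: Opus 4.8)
The plan is to prove that $\mu_P^{(0)}$ is a Radon measure by reducing, as in Lemma \ref{lem:properties of induced measure}, to the single task of showing that $\mu_P^{(0)}$ is locally finite. Since $P^{(0)}$ is a closed subspace of $S^{(0)} \times G \times T^{(0)}$ and hence strongly Radon (being second countable, locally compact and Hausdorff), every locally finite Borel measure on it is automatically Radon. So the entire content of the proposition is the local finiteness of $\mu_P^{(0)}$, and this is precisely where Assumption \ref{ass:LB disintegration} must be invoked.

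The structural observation driving the argument is that $\mu_P^{(0)}$ is the composition of the BSM $\eta^{\bullet}$ on $\pi_G : P^{(0)} \rightarrow G$ with the induced measure $\mu_G$ on $G$. The natural tool is therefore Corollary 3.7 of \cite{BSM} (the same result used in Lemma \ref{lem:properties of induced measure}), which guarantees that the composition of a locally bounded BSM with a locally finite measure is locally finite. I would verify the two hypotheses in turn. First, $\mu_G$ is locally finite: this is Lemma \ref{lem:properties of induced measure}. Second, and this is the crux, $\eta^{\bullet}$ must be shown to be locally bounded. Here I would trace back through the construction in Proposition \ref{prop:eta}: $\eta^{\bullet}$ was obtained by first forming the fibred product $(\gamma_p * \gamma_q)^{\bullet}$ and then lifting along $(r,d)$. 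Under Assumption \ref{ass:LB disintegration}, $\gamma_p^{\bullet}$ and $\gamma_q^{\bullet}$ are locally bounded; I expect that the relevant propositions from Sections 4 and 5 of \cite{BSM} ensure that both the fibred product and the lifting preserve local boundedness, so that $\eta^{\bullet}$ inherits it.

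Concretely, to confirm local boundedness of $\eta^{\bullet}$ by hand, I would work with an elementary open set. Given a point $(s,x,t) \in P^{(0)}$, choose a product neighborhood $(A \times B \times C) \cap P^{(0)}$ with $A \subseteq S^{(0)}$, $B \subseteq G$, $C \subseteq T^{(0)}$ chosen small enough that $\gamma_p^{\bullet}(A)$ and $\gamma_q^{\bullet}(C)$ are uniformly bounded (by local boundedness of the disintegrations) and $\delta_{\bullet}(B) \leq 1$ trivially. By Lemma \ref{lem:eta^x(E)}, $\eta^{x'}\bigl((A \times B \times C) \cap P^{(0)}\bigr) = \gamma_p^{r(x')}(A)\,\delta_{x'}(B)\,\gamma_q^{d(x')}(C)$, which is then bounded by a single constant uniformly in $x' \in G$. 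This gives a neighborhood on which $\eta^{\bullet}$ is uniformly bounded, i.e. local boundedness.

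The main obstacle is genuinely the local boundedness of $\eta^{\bullet}$, and it is the reason Assumption \ref{ass:LB disintegration} is imposed in the first place. Without some control on the disintegration systems $\gamma_p^{\bullet}$ and $\gamma_q^{\bullet}$ beyond their being locally finite BSMs, there is no guarantee that the fibre-measures $\gamma_p^{r(x)}(A)$ stay bounded as $x$ ranges over $G$; a locally finite BSM can have fibre masses that blow up along a fibre. The local boundedness hypothesis is exactly what rules this out, and once it is in place the two remaining ingredients — local finiteness of $\mu_G$ and the composition result of \cite{BSM} — assemble immediately to yield that $\mu_P^{(0)}$ is locally finite, hence Radon. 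I would close by explicitly citing Corollary 3.7 of \cite{BSM} as the final step, mirroring the proof of Lemma \ref{lem:properties of induced measure}.
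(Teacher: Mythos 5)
Your proof is correct, and in fact it is precisely the alternative argument that the authors themselves sketch in the sentence immediately following their proof: ``an alternative proof of Proposition \ref{prop:muP0 is Radon} is obtained by arguing that the system $\eta^{\bullet}$ is locally bounded (modulo Assumption \ref{ass:LB disintegration}), and then applying Corollary 3.7 of \cite{BSM}.'' The paper's own proof in the body of the proposition is more direct: it evaluates $\mu_P^{(0)}$ on an elementary open set $E=(A\times B\times C)\cap P^{(0)}$ with $A$, $B$, $C$ relatively compact, uses Lemma \ref{lem:eta^x(E)} to get $\mu_P^{(0)}(E)=\int_B \gamma_p^{r(x)}(A)\gamma_q^{d(x)}(C)\,d\mu_G(x)$, and bounds this by $\bigl(\sup_{s}\gamma^s_p(A)\bigr)\bigl(\sup_{t}\gamma^t_q(C)\bigr)\mu_G(B)<\infty$ using Assumption \ref{ass:LB disintegration} and Lemma \ref{lem:properties of induced measure}. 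Your route factors the same estimate through the abstract machinery: you first verify that $\eta^{\bullet}$ is locally bounded (your hands-on check via Lemma \ref{lem:eta^x(E)}, choosing $A$ and $C$ to be the neighborhoods furnished by local boundedness of $\gamma_p^{\bullet}$ and $\gamma_q^{\bullet}$, is sound), and then invoke Corollary 3.7 of \cite{BSM} exactly as in Lemma \ref{lem:properties of induced measure}. The two arguments use the same inputs and the same key inequality; what your packaging buys is conceptual uniformity with Lemma \ref{lem:properties of induced measure} and a reusable fact (local boundedness of $\eta^{\bullet}$), while the paper's direct computation has the advantage of producing the explicit formula for $\mu_P^{(0)}(E)$ on elementary open sets, which is then reused verbatim in the proof that $\mu_P^{(0)}$ is independent of the choice of disintegrations. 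Your identification of Assumption \ref{ass:LB disintegration} as the crux, and your remark that mere local finiteness of the disintegrations would not suffice, are both exactly right.
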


\begin{proof}
It suffices to show that $\mu_P^{(0)}$ is locally finite. Let $A\subseteq S^{(0)}$, $B\subseteq G$ and $C\subseteq T^{(0)}$ be open subsets with compact closures and consider the set $E=(A\times B\times C)\cap P^{(0)}$, which is an open subset of $P^{(0)}$. Using the definition of $\mu_P^{(0)}$ above along with Lemma \ref{lem:eta^x(E)}, we get $$\mu_P^{(0)}(E)=\int_G \eta^x (E) d\mu_G(x)=\int_G \gamma_p^{r(x)}(A)\delta_x(B)\gamma_q^{d(x)}(C)d\mu_G(x)=\int_B \gamma_p^{r(x)}(A)\gamma_q^{d(x)}(C)d\mu_G(x).$$
It thus follows from Assumption \ref{ass:LB disintegration} that
$$\mu_P^{(0)}(E)\leq \left(\sup_{s}\gamma^s_p(A)\right)\cdot\left(\sup_{t}\gamma^t_q(C)\right)\cdot\mu_G(B)<\infty.$$
Since the open sets of the same form as $E$ constitute a basis for the topology of $P^{(0)}$, we conclude that $\mu_P^{(0)}$ is locally finite.
\end{proof}
Note that an alternative proof of Proposition \ref{prop:muP0 is Radon} is obtained by arguing that the system $\eta^{\bullet}$ is locally bounded (modulo Assumption \ref{ass:LB disintegration}), and then applying Corollary 3.7 of \cite{BSM}.

\begin{prop}
The measure $\mu_P^{(0)}$ is independent of the choice of the disintegrations $\gamma_p^{\bullet}$ and $\gamma_q^{\bullet}$.
\end{prop}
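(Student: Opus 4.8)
The plan is to show that two different choices of disintegrations yield measures that agree on a generating family of sets, and then invoke a uniqueness criterion. Let $(\gamma_p^{\bullet},\gamma_q^{\bullet})$ and $(\tilde{\gamma}_p^{\bullet},\tilde{\gamma}_q^{\bullet})$ be two admissible choices (each locally bounded, as in Assumption \ref{ass:LB disintegration}), and let $\mu_P^{(0)}$ and $\tilde{\mu}_P^{(0)}$ be the corresponding measures on $P^{(0)}$. By Proposition \ref{prop:muP0 is Radon} both are Radon, hence locally finite. Fixing a countable basis ${\mathcal B}$ of elementary open sets $E=(A\times B\times C)\cap P^{(0)}$ as in Remark \ref{rem:elementary open subsets of P}, whose finite intersections are again elementary, it suffices by Lemma 2.24 of \cite{BSM} to prove that $\mu_P^{(0)}(E)=\tilde{\mu}_P^{(0)}(E)$ for every such $E$.

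Fix an elementary set $E=(A\times B\times C)\cap P^{(0)}$, which I may take with $A$, $B$, $C$ precompact. Combining the definition of $\mu_P^{(0)}$ with Lemma \ref{lem:eta^x(E)} gives
$$\mu_P^{(0)}(E)=\int_B \gamma_p^{r(x)}(A)\,\gamma_q^{d(x)}(C)\,d\mu_G(x),$$
and likewise $\tilde{\mu}_P^{(0)}(E)=\int_B \tilde{\gamma}_p^{r(x)}(A)\,\tilde{\gamma}_q^{d(x)}(C)\,d\mu_G(x)$. It is therefore enough to show that the two integrands coincide for $\mu_G$-almost every $x\in G$.

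The essential uniqueness of disintegrations supplies the needed equality on $G^{(0)}$. Since $\gamma_p^{\bullet}$ and $\tilde{\gamma}_p^{\bullet}$ are disintegrations of $\mu_S^{(0)}$ with respect to $\mu_G^{(0)}$, each concentrated on the fibers of $p$, evaluating the disintegration identity on $A\cap p^{-1}(F)$ and using fiber concentration yields $\mu_S^{(0)}(A\cap p^{-1}(F))=\int_F \gamma_p^u(A)\,d\mu_G^{(0)}(u)=\int_F \tilde{\gamma}_p^u(A)\,d\mu_G^{(0)}(u)$ for every Borel $F\subseteq G^{(0)}$. Together with the $\sigma$-finiteness of $\mu_G^{(0)}$, this produces a $\mu_G^{(0)}$-null set $N_{p,A}\subseteq G^{(0)}$ with $\gamma_p^u(A)=\tilde{\gamma}_p^u(A)$ for all $u\notin N_{p,A}$, and symmetrically a $\mu_G^{(0)}$-null set $N_{q,C}$ for $\gamma_q^{\bullet}$ on $C$.

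It remains to transfer these null sets from $G^{(0)}$ up to $G$, which is where the groupoid structure is genuinely used and which I regard as the main obstacle. The two integrands can differ only on $r^{-1}(N_{p,A})\cup d^{-1}(N_{q,C})$. By Lemma \ref{lem:rG is measure class preserving} the range map is measure class preserving, so $\mu_G(r^{-1}(N_{p,A}))=0$. For the source map I use the inversion identity $d(x)=r(x^{-1})$, which gives $d^{-1}(N_{q,C})=\bigl(r^{-1}(N_{q,C})\bigr)^{-1}$, whence $\mu_G(d^{-1}(N_{q,C}))=\mu_G^{-1}(r^{-1}(N_{q,C}))$; since $\mu_G(r^{-1}(N_{q,C}))=0$ by Lemma \ref{lem:rG is measure class preserving} and $\mu_G\sim\mu_G^{-1}$ by quasi invariance (Definition \ref{def:quasi-invariant measure}), this vanishes as well. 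Hence the integrands agree $\mu_G$-almost everywhere, so $\mu_P^{(0)}(E)=\tilde{\mu}_P^{(0)}(E)$, and Lemma 2.24 of \cite{BSM} completes the proof.
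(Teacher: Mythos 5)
Your proof is correct, and it reaches the key identity $\mu_P^{(0)}(E)=\int_B \gamma_p^{r(x)}(A)\gamma_q^{d(x)}(C)\,d\mu_G(x)$ and the final appeal to Lemma 2.24 of \cite{BSM} exactly as the paper does; where you genuinely diverge is in how the $d$-dependent factor is handled. The paper quotes Corollary 6.6 of \cite{BSM} for the almost-everywhere uniqueness of disintegrations and then performs the replacement of $\widetilde{\gamma_q}^{d(x)}$ by $\gamma_q^{d(x)}$ through a chain of integral manipulations: it expands $\mu_G$ via Lemma \ref{lem:integrating muG vs muG0} to pull out the $r$-dependent factor, and uses formula (\ref{eq:Delta inverse}) of Remark \ref{rem:Delta} (the modular function $\Delta_G^{-1}$) to convert $d(x)$ into $r(x)$, make the substitution, and convert back. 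You instead work at the level of null sets: you re-derive the essential uniqueness of the disintegrations on a fixed precompact set (your argument via $A\cap p^{-1}(F)$ and fiber concentration is sound, and the resulting null sets, though depending on $A$ and $C$, are all you need for a fixed elementary set), and you transfer the exceptional sets from $G^{(0)}$ to $G$ by observing that $r^{-1}(N)$ is $\mu_G$-null by Lemma \ref{lem:rG is measure class preserving} while $d^{-1}(N)=(r^{-1}(N))^{-1}$ is $\mu_G$-null because $\mu_G\sim\mu_G^{-1}$. Both arguments rest on the quasi-invariance of $\mu_G^{(0)}$ in an essential way, but yours uses it only through the measure-class equivalence $\mu_G\sim\mu_G^{-1}$ and never touches the Radon--Nikodym derivative, which makes it shorter and more transparent; the paper's computation has the incidental virtue of rehearsing the $\Delta_G^{-1}$ manipulations that reappear in the proof of Proposition \ref{prop:muP is quasi invariant}, but buys nothing logically that your route does not.
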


\begin{proof}
Let $\widetilde{\gamma_p}^{\bullet}$ and $\widetilde{\gamma_q}^{\bullet}$ be two other disintegrations on $p$ and $q$ respectively, and let $\widetilde{\mu}_P^{(0)}$ be the corresponding measure on $P^{(0)}$. By Corollary 6.6 in \cite{BSM}, $\widetilde{\gamma_p}^{u}=\gamma_p^{u}$  and $\widetilde{\gamma_q}^{u}=\gamma_q^{u}$ for $\mu_G^{(0)}$-almost every $u$ in $G^{(0)}$.

Let $A\subseteq S^{(0)}$, $B\subseteq G$ and $C\subseteq T^{(0)}$ be open and let $E=(A\times B\times C)\cap P^{(0)}$ be the corresponding open subset of $P^{(0)}$. By the calculation in the proof of Proposition \ref{prop:muP0 is Radon} above, $\mu_P^{(0)}(E)=\int_B \gamma_p^{r(x)}(A)\gamma_q^{d(x)}(C)d\mu_G(x),$ and likewise $\widetilde{\mu}_P^{(0)}(E)=\int_B \widetilde{\gamma_p}^{r(x)}(A)\widetilde{\gamma_q}^{d(x)}(C)d\mu_G(x).$ Using Lemma \ref{lem:integrating muG vs muG0} and the fact that $supp(\lambda_G^u) = r^{-1}(u)$ we get
\begin{eqnarray*}
\widetilde{\mu}_P^{(0)}(E) &=& \int_B \widetilde{\gamma_p}^{r(x)}(A)\widetilde{\gamma_q}^{d(x)}(C)d\mu_G(x) \ = \ \int_{G^{(0)}} \left(\int_B \widetilde{\gamma_p}^{r(x)}(A)\widetilde{\gamma_q}^{d(x)}(C) d\lambda_G^u(x) \right) d\mu_G^{(0)}(u) \\
&=& \int_{G^{(0)}} \widetilde{\gamma_p}^u(A) \left(\int_B \widetilde{\gamma_q}^{d(x)}(C) d\lambda_G^u(x) \right) d\mu_G^{(0)}(u) \\
&=& \int_{G^{(0)}} \gamma_p^u(A) \left(\int_B \widetilde{\gamma_q}^{d(x)}(C) d\lambda_G^u(x) \right) d\mu_G^{(0)}(u) \\
&=& \int_{G^{(0)}} \left(\int_B \gamma_p^{r(x)}(A) \widetilde{\gamma_q}^{d(x)}(C) d\lambda_G^u(x) \right) d\mu_G^{(0)}(u)
\end{eqnarray*}
Justification for the next step is based on formula (\ref{eq:Delta inverse}) of Remark \ref{rem:Delta}. The remaining calculation retraces the previous arguments. \begin{eqnarray*}
&=& \int_{G^{(0)}}\left(\int_B \gamma_p^{d(x)}(A)\widetilde{\gamma_q}^{r(x)}(C)\Delta_G^{-1}(x)d\lambda_G^u(x)\right)d\mu_G^{(0)}(u) \\
&=& \int_{G^{(0)}}\widetilde{\gamma_q}^{u}(C)\left(\int_B \gamma_p^{d(x)}(A)\Delta_G^{-1}(x)d\lambda_G^u(x)\right)d\mu_G^{(0)}(u) \\
&=& \int_{G^{(0)}}\gamma_q^{u}(C)\left(\int_B \gamma_p^{d(x)}(A)\Delta_G^{-1}(x)d\lambda_G^u(x)\right)d\mu_G^{(0)}(u) \\
&=& \int_{G^{(0)}}\left(\int_B \gamma_p^{d(x)}(A)\gamma_q^{r(x)}(C)\Delta_G^{-1}(x)d\lambda_G^u(x)\right)d\mu_G^{(0)}(u) \\
&=& \int_{G^{(0)}}\left(\int_B \gamma_p^{r(x)}(A)\gamma_q^{d(x)}(C)d\lambda_G^u(x)\right)d\mu_G^{(0)}(u) \ = \ \mu_P^{(0)}(E)
\end{eqnarray*}
Thus, $\widetilde{\mu}_P^{(0)}(E) = \mu_P^{(0)}(E)$ for any open set of the form $E=(A\times B\times C)\cap P^{(0)}$. These sets constitute a countable basis $\mathcal{B}^{(0)}$ for the topology of $P^{(0)}$, in analogy to Remark \ref{rem:elementary open subsets of P}. Therefore, since $\mu_P^{(0)}$ is locally finite, it follows that $\widetilde{\mu}_P^{(0)}$ is locally finite as well. Moreover, $\mu_P^{(0)}$ and $\widetilde{\mu}_P^{(0)}$ agree on finite intersections of sets in $\mathcal{B}^{(0)}$ as these sets are also in $\mathcal{B}^{(0)}$, so we can now use Lemma 2.24 of \cite{BSM}, as in the proof of Proposition \ref{prop:left invariance of lambda P}, and conclude that $\widetilde{\mu}_P^{(0)} = \mu_P^{(0)}$.
\end{proof}

\noindent The following is a simple observation, whose proof is analogous to the proof of Lemma \ref{lem:integrating muG vs muG0}, and thus omitted.
\begin{lem}\label{lem:integrating dmuP0}
For any Borel function $f$ on $P^{(0)}$:
$$\int_{P^{(0)}} f(u) d\mu_P^{(0)}(u) = \int_{G} \left( \int_{P^{(0)}} f(u) d\eta^y (u) \right) d\mu_G (y).$$
\end{lem}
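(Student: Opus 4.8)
The plan is to follow the standard measure-theoretic bootstrap used in the proof of Lemma \ref{lem:integrating muG vs muG0}, starting from indicator functions and successively widening the class of functions $f$ for which the identity holds.

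First I would verify the identity for $f = \chi_{_E}$, the indicator of an arbitrary Borel set $E \subseteq P^{(0)}$. Here the two sides reduce directly to the defining formula for $\mu_P^{(0)}$. The left-hand side equals $\int_{P^{(0)}} \chi_{_E}(u) \, d\mu_P^{(0)}(u) = \mu_P^{(0)}(E)$, while the inner integral on the right equals $\int_{P^{(0)}} \chi_{_E}(u) \, d\eta^y(u) = \eta^y(E)$, so the right-hand side equals $\int_G \eta^y(E) \, d\mu_G(y)$, which is exactly $\mu_P^{(0)}(E)$ by definition. Thus the identity holds for indicators by construction.

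Next, by linearity of both integrals, the identity extends to non-negative simple functions. To reach an arbitrary non-negative Borel function $f$, I would choose an increasing sequence of non-negative simple functions $f_n \uparrow f$ and apply the Monotone Convergence Theorem on each side: on the left directly, and on the right first to the inner integral $y \mapsto \int_{P^{(0)}} f_n \, d\eta^y$ and then to the outer integral against $\mu_G$. Finally, writing a general Borel $f$ as $f = f^+ - f^-$ and applying the non-negative case to each part (assuming $f$ integrable, so the difference is defined) completes the argument.

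The only point requiring care is that, for the outer integral $\int_G (\,\cdots\,) \, d\mu_G$ to be meaningful at every stage, the inner integral $y \mapsto \int_{P^{(0)}} f \, d\eta^y$ must itself be a Borel function of $y \in G$. This is guaranteed because $\eta^{\bullet}$ is a BSM on $\pi_G : P^{(0)} \rightarrow G$ by Proposition \ref{prop:eta}: for indicators, $y \mapsto \eta^y(E)$ is Borel by the very definition of a BSM, and Borel measurability is then preserved under the non-negative linear combinations and the monotone limits used above. Once this measurability is in place no further obstacle arises, which is precisely why the proof runs parallel to that of Lemma \ref{lem:integrating muG vs muG0}.
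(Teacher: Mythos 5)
Your proof is correct and matches the paper's intent exactly: the paper omits the proof as ``analogous to the proof of Lemma \ref{lem:integrating muG vs muG0},'' whose argument is precisely the one you give --- verify the identity for indicators directly from the definition of $\mu_P^{(0)}$, then extend by linearity, monotone convergence, and the decomposition $f = f^+ - f^-$. Your added remark that the measurability of $y \mapsto \int_{P^{(0)}} f \, d\eta^y$ is supplied by the BSM property of $\eta^{\bullet}$ from Proposition \ref{prop:eta} is a worthwhile detail the paper leaves implicit.
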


In \S 3 of \cite{BSM} we defined the composition $(\beta\circ\alpha)^{\bullet}$ of BSMs $\xymatrix{X\ar [rr]^{p}_{\alpha^{\bullet}}&&Y\ar [rr]^{q}_{\beta^{\bullet}}&&Z}$, which is characterized by
\begin{equation}\label{eq:characterization of composition}
\int_X f(x)d (\beta \circ \alpha)^z (x) = \int_Y \left(\int_X f(x)d\alpha^y (x) \right) d\beta^z(y).
\end{equation}
This will be essential for proving the following lemma.
\begin{lem}\label{lem:commuting triple integrals}
For any Borel function $f(y,\sigma)$ on $G*S$,
$$ \int_{\!\!S^{(0)}\!\!} \int_S \int_G f(y,\sigma) d\lambda_G^{p(r_S(\sigma))}(y) d\lambda_S^s(\sigma) d\gamma_p^u(s) = \int_G \int_{\!\!S^{(0)}\!\!} \int_S f(y,\sigma) d\lambda_S^s(\sigma) d\gamma_p^{r_G(y)}(s) d\lambda_G^u(y). $$
\end{lem}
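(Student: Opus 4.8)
The plan is to collapse the two integrations against $\lambda_S^{\bullet}$ and $\gamma_p^{\bullet}$ into a single integration against their composite, and then to recognize the resulting identity as an instance of the Fubini--Tonelli theorem. Concretely, applying the composition of BSMs from \S 3 of \cite{BSM} to the maps $S \xrightarrow{r_S} S^{(0)} \xrightarrow{p} G^{(0)}$, equipped respectively with the systems $\lambda_S^{\bullet}$ and $\gamma_p^{\bullet}$, I obtain a BSM $\kappa^{\bullet} := (\gamma_p \circ \lambda_S)^{\bullet}$ on the map $p \circ r_S : S \to G^{(0)}$, characterized by (\ref{eq:characterization of composition}).

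First I would rewrite the right-hand side: for each fixed $y \in G$, formula (\ref{eq:characterization of composition}) applied to $\sigma \mapsto f(y,\sigma)$ with $z = r_G(y)$ turns the inner double integral $\int_{S^{(0)}}\int_S f(y,\sigma)\,d\lambda_S^s(\sigma)\,d\gamma_p^{r_G(y)}(s)$ into $\int_S f(y,\sigma)\,d\kappa^{r_G(y)}(\sigma)$, so that the right-hand side becomes $\int_G \int_S f(y,\sigma)\,d\kappa^{r_G(y)}(\sigma)\,d\lambda_G^u(y)$. Symmetrically, on the left-hand side the outer two integrations against $\lambda_S^{\bullet}$ and $\gamma_p^{\bullet}$ combine, via (\ref{eq:characterization of composition}) with $z = u$, into a single integration against $\kappa^u$, so that the left-hand side becomes $\int_S \int_G f(y,\sigma)\,d\lambda_G^{p(r_S(\sigma))}(y)\,d\kappa^u(\sigma)$.

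Next I would decouple the two remaining superscripts using the concentration properties of the systems. Since $\kappa^u$ is concentrated on $(p\circ r_S)^{-1}(u)$, one has $p(r_S(\sigma)) = u$ for $\kappa^u$-almost every $\sigma$, so $\lambda_G^{p(r_S(\sigma))}$ may be replaced by $\lambda_G^u$ on the left; dually, since $\lambda_G^u$ is concentrated on $r_G^{-1}(u)$, one has $r_G(y) = u$ for $\lambda_G^u$-almost every $y$, so $\kappa^{r_G(y)}$ may be replaced by $\kappa^u$ on the right. Both sides thus reduce to the two iterated integrals of $f(y,\sigma)$ against the fixed measures $\lambda_G^u$ (in the variable $y$) and $\kappa^u$ (in the variable $\sigma$). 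Extending $f$ by zero to a Borel function on $G\times S$ (legitimate since $G*S$ is closed in $G\times S$, being the preimage of the diagonal of $G^{(0)}\times G^{(0)}$ under the continuous map $(y,\sigma)\mapsto (r_G(y),p(r_S(\sigma)))$), both iterated integrals equal $\int_{G\times S} f\,d(\lambda_G^u\times\kappa^u)$ by Tonelli's theorem, and hence coincide.

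The step requiring the most care is the verification that Tonelli applies, i.e.\ that the two fibre measures are $\sigma$-finite. For $\lambda_G^u$ this is immediate, as it is a Radon measure on the Polish space $G$. For $\kappa^u$ I would argue exactly as in the proof of Lemma \ref{lem:properties of induced measure}: by its defining formula $\kappa^u(E) = \int_{S^{(0)}} \lambda_S^s(E)\,d\gamma_p^u(s)$, the measure $\kappa^u$ is the composition of the locally bounded BSM $\lambda_S^{\bullet}$ with the locally finite measure $\gamma_p^u$, so Corollary 3.7 of \cite{BSM} shows it is locally finite, hence $\sigma$-finite on $S$. Finally, the passage from non-negative Borel $f$ (to which Tonelli applies directly) to a general Borel $f$ for which the integrals converge is the routine decomposition into positive and negative parts.
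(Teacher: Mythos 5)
Your proof is correct, but it reaches the identity by a different route than the paper. Both arguments begin the same way, by forming the composite BSM $(\gamma_p \circ \lambda_S)^{\bullet}$ on $p \circ r_S : S \rightarrow G^{(0)}$ and using the characterization (\ref{eq:characterization of composition}) to collapse the two $S$-integrations. From there the paper stays inside the formalism of \cite{BSM}: it lifts $\lambda_G^{\bullet}$ and $(\gamma_p \circ \lambda_S)^{\bullet}$ to the fibred product $G*S$ along the two projections and invokes Proposition 4.8 of \cite{BSM}, which asserts that the resulting square of BSMs commutes; unwinding the definitions of the lifted systems then yields the two sides of the identity. You instead never form the lifted systems: you fix $u$, use the concentration of $\kappa^u = (\gamma_p \circ \lambda_S)^u$ on $(p\circ r_S)^{-1}(u)$ and of $\lambda_G^u$ on $r_G^{-1}(u)$ to freeze both superscripts at $u$, and then recognize the two sides as the two iterated integrals of $f$ (extended by zero to $G \times S$) against the fixed product measure $\lambda_G^u \times \kappa^u$, which agree by Tonelli. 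The trade-off is clear: the paper's version outsources all measurability and finiteness bookkeeping to Proposition 4.8 and is uniform with the other fibred-product manipulations in the text, whereas yours is more elementary and self-contained but obliges you to verify $\sigma$-finiteness of $\kappa^u$ and the Borel extension of $f$ by hand --- which you do correctly ($\kappa^u$ is locally finite by Corollary 3.7 of \cite{BSM} and hence $\sigma$-finite on the second countable space $S$, and $G*S$ is closed in $G \times S$). Your observation that the superscripts collapse on the relevant supports is in fact exactly the mechanism hidden inside the paper's appeal to the commutativity of the lifted diagram.
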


\begin{proof}
Consider the composition $(\gamma_p \circ \lambda_S)^{\bullet}$ of the BSMs $\xymatrix{S\ar [rr]^{r_S}_{\lambda_S^{\bullet}}&&S^{(0)}\ar [rr]^{p}_{\gamma_p^{\bullet}}&&G^{(0)}}$. We use this as the right edge in the pull-back diagram below. Following \S 4 of \cite{BSM}, we lift the BSM $\lambda_G^{\bullet}$ to obtain a BSM $((p \circ r_S)^*\lambda_G)^{\bullet}$ on $\pi_S:G*S \rightarrow S$, and
we lift the BSM $(\gamma_p \circ \lambda_S)^{\bullet}$ to obtain a BSM $(r_G^*(\gamma_p \circ \lambda_S))^{\bullet}$ on $\pi_G:G*S \rightarrow G$.
\[
\begin{array}{cc}
\xymatrix{G*S \ar [dd]_{\pi_G}^{(r_G^*(\gamma_p \circ \lambda_S))^{\bullet}} \ar [rr]^{\pi_S}_{((p \circ r_S)^*\lambda_G)^{\bullet}} && S \ar [dd]_{p \circ r_S}^{(\gamma_p \circ \lambda_S)^{\bullet}} \\\\
G\ar [rr]^{r_G}_{\lambda_G^{\bullet}} && G^{(0)}}
\end{array}
\]
By the definition of lifting, $$((p \circ r_S)^*\lambda_G)^{\sigma} = \lambda_G^{p(r_S(\sigma))} \times \delta_{\sigma}, \qquad \sigma \in S$$ and
$$({r_G}^*(\gamma_p \circ \lambda_S))^{y} = \delta_y \times (\gamma_p \circ \lambda_S)^{r_G(y)}, \qquad y \in G.$$
The above diagram gives rise to two compositions: $\xymatrix{G*S\ar [rr]^{\pi_S}_{((p \circ r_S)^*\lambda_G)^{\bullet}}&&S \ar [rr]^{p \circ r_S}_{(\gamma_p \circ \lambda_S)^{\bullet}}&&G^{(0)}}$ and $\xymatrix{G*S\ar [rr]^{\pi_G}_{(r_G^*(\gamma_p \circ \lambda_S))^{\bullet}}&&G \ar [rr]^{r_G}_{\lambda_G^{\bullet}}&&G^{(0)}}$. However, proposition 4.8 of \cite{BSM} states that the above diagram is a commutative diagram of BSMs, and explicitly,
$$[(\gamma_p \circ \lambda_S) \circ ((p \circ r_S)^*\lambda_G)]^{\bullet} = [\lambda_G \circ ({r_G}^*(\gamma_p \circ \lambda_S)]^{\bullet},$$ as BSMs on $G*S \rightarrow G^{(0)}$. The above equality implies that for any Borel function $f(y,\sigma)$ on $G*S$,
\[
\int_{G*S} f(y,\sigma) d((\gamma_p \circ \lambda_S) \circ ((p \circ r_S)^*\lambda_G))^{u}(y,\sigma) = \int_{G*S} f(y,\sigma) d(\lambda_G \circ ({r_G}^*(\gamma_p \circ \lambda_S))^{u}(y,\sigma).
\]
We expand the left and the right hand sides of the above equality separately, using repeatedly the characterization (\ref{eq:characterization of composition}) of composition of BSMs above:
\begin{eqnarray*}
LHS &=& \int_{G*S} f(y,\sigma) d((\gamma_p \circ \lambda_S) \circ ((p \circ r_S)^*\lambda_G))^{u}(y,\sigma) \\
&=& \int_S \left(  \int_{G*S} f(y,\sigma) d((p \circ r_S)^*\lambda_G)^{\widetilde{\sigma}}(y,\sigma) \right) d(\gamma_p \circ \lambda_S)^u (\widetilde{\sigma}) \\
&=& \int_{S^{(0)}} \int_S \left(  \int_{G*S} f(y,\sigma) d((p \circ r_S)^*\lambda_G)^{\widetilde{\sigma}}(y,\sigma) \right) d\lambda_S^s(\widetilde{\sigma}) d\gamma_p^u(s) \\
&=& \int_{S^{(0)}} \int_S \left(  \int_{G*S} f(y,\sigma) d(\lambda_G^{p(r_S(\widetilde{\sigma}))} \times \delta_{\widetilde{\sigma}})(y,\sigma) \right) d\lambda_S^s(\widetilde{\sigma}) d\gamma_p^u(s) \\
&=& \int_{S^{(0)}} \int_S \left(  \int_{G*S} f(y,\sigma) d\lambda_G^{p(r_S(\widetilde{\sigma}))}(y) d \delta_{\widetilde{\sigma}}(\sigma) \right) d\lambda_S^s(\widetilde{\sigma}) d\gamma_p^u(s) \\
&=& \int_{S^{(0)}} \int_S \int_G f(y,\sigma) d\lambda_G^{p(r_S(\sigma))}(y) d\lambda_S^s(\sigma) d\gamma_p^u(s)
\end{eqnarray*}

\begin{eqnarray*}
RHS &=& \int_{G*S} f(y,\sigma) d(\lambda_G \circ ({r_G}^*(\gamma_p \circ \lambda_S))^{u}(y,\sigma) \\
&=& \int_G \left( \int_{G*S} f(y,\sigma) d({r_G}^*(\gamma_p \circ \lambda_S))^{\widetilde{y}}(y,\sigma) \right) d\lambda_G^u(\widetilde{y}) \\
&=& \int_G \left( \int_{G*S} f(y,\sigma) d(\delta_{\widetilde{y}} \times (\gamma_p \circ \lambda_S)^{r_G({\widetilde{y}})})(y,\sigma) \right) d\lambda_G^u(\widetilde{y}) \\
&=& \int_G \left( \int_{G*S} f(y,\sigma) d\delta_{\widetilde{y}}(y) d(\gamma_p \circ \lambda_S)^{r_G({\widetilde{y}})}(\sigma) \right) d\lambda_G^u(\widetilde{y}) \\
&=& \int_G \left( \int_S f(y,\sigma) d(\gamma_p \circ \lambda_S)^{r_G(y)}(\sigma) \right) d\lambda_G^u(y) \\
&=& \int_G \int_{S^{(0)}} \int_S f(y,\sigma) d\lambda_S^s(\sigma) d\gamma_p^{r_G(y)}(s) d\lambda_G^u(y)
\end{eqnarray*}
Since the above expressions are equal, this yields the desired formula.
\end{proof}

\begin{lem}\label{lem:expanding integral muP}
Let $f(\sigma,x,\tau)$ be a Borel function on $P$. Then $$\hspace{-10cm} \int_P f(\sigma,x,\tau) d\mu_P(\sigma,x,\tau) = $$
$$\int_{G^{(0)}}\int_G\int_{S^{(0)}}\int_{S}\int_{T^{(0)}}\int_{T} f(\sigma,y,\tau)d\lambda_T^{t}(\tau)d\gamma_q^{d(y)}(t) d\lambda_S^{s}(\sigma) d\gamma_p^{r(y)}(s)d\lambda^u_G(y)d\mu_G^{(0)}(u).$$
\end{lem}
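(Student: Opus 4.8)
The plan is to peel off the measure $\mu_P$ one layer at a time, unfolding at each stage the product structure of the system of measures involved, and to finish by reordering the resulting iterated integral via Fubini's theorem. The three structural facts driving the computation are: the induced measure $\mu_P$ disintegrates over $\mu_P^{(0)}$ through the Haar system $\lambda_P^{\bullet}$ (Lemma \ref{lem:integrating muG vs muG0} applied to $P$); the measure $\mu_P^{(0)}$ disintegrates over $\mu_G$ through $\eta^{\bullet}$ (Lemma \ref{lem:integrating dmuP0}); and the induced measure $\mu_G$ itself disintegrates over $\mu_G^{(0)}$ through $\lambda_G^{\bullet}$ (Lemma \ref{lem:integrating muG vs muG0} applied to $G$). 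Together with the explicit product forms $\lambda_P^{(s,g,t)} = \lambda_S^s \times \delta_g \times \lambda_T^t$ (Definition \ref{def:Haar for P}) and $\eta^y = \gamma_p^{r(y)} \times \delta_y \times \gamma_q^{d(y)}$ (Proposition \ref{prop:eta}), these account for every integral sign in the target formula.

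First I would apply Lemma \ref{lem:integrating muG vs muG0} to the Haar groupoid $P$ to write $\int_P f \, d\mu_P = \int_{P^{(0)}} \left( \int_P f \, d\lambda_P^u \right) d\mu_P^{(0)}(u)$. For fixed $u = (s,g,t) \in P^{(0)}$, since $\lambda_P^{(s,g,t)}$ is concentrated on $S^s \times \{g\} \times T^t$ (Lemma \ref{lem:fibers of P}), Fubini for this product measure collapses the $\delta_g$ factor and gives $\int_P f \, d\lambda_P^{(s,g,t)} = \int_S \int_T f(\sigma,g,\tau) \, d\lambda_T^t(\tau)\, d\lambda_S^s(\sigma)$. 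Next I would invoke Lemma \ref{lem:integrating dmuP0} to replace the outer $\mu_P^{(0)}$-integral by $\int_G \left( \int_{P^{(0)}} (\,\cdot\,) \, d\eta^y \right) d\mu_G(y)$, and then expand the inner $\eta^y$-integral. As in Lemma \ref{lem:eta^x(E)}, the $\delta_y$ factor forces the $G$-coordinate to equal $y$, so that $\int_{P^{(0)}} F(s,g,t) \, d\eta^y = \int_{S^{(0)}} \int_{T^{(0)}} F(s,y,t) \, d\gamma_q^{d(y)}(t)\, d\gamma_p^{r(y)}(s)$. Substituting the expression for the inner $\lambda_P^{\bullet}$-integral and assembling the pieces yields $\int_P f \, d\mu_P = \int_G \int_{S^{(0)}} \int_{T^{(0)}} \int_S \int_T f(\sigma,y,\tau)\, d\lambda_T^t(\tau)\, d\lambda_S^s(\sigma)\, d\gamma_q^{d(y)}(t)\, d\gamma_p^{r(y)}(s)\, d\mu_G(y)$.

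It then remains to decompose the outermost $\mu_G$-integral and to reorder. Applying Lemma \ref{lem:integrating muG vs muG0} to $G$ replaces $\int_G (\,\cdot\,) \, d\mu_G(y)$ by $\int_{G^{(0)}} \int_G (\,\cdot\,) \, d\lambda_G^u(y)\, d\mu_G^{(0)}(u)$, producing the two outermost integrals of the target. Finally I would use Fubini to interchange the $\int_S d\lambda_S^s(\sigma)$ integral with the $\int_{T^{(0)}} d\gamma_q^{d(y)}(t)$ integral, moving the $S$-integral inward so that the order of the four middle differentials becomes $d\lambda_T^t\, d\gamma_q^{d(y)}\, d\lambda_S^s\, d\gamma_p^{r(y)}$, exactly as in the statement.

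The main obstacle I anticipate is the justification of the various interchanges of integration, both the collapsing of the Dirac factors inside the product systems and the final reordering of the $S$- and $T^{(0)}$-integrals. Since these are systems of measures rather than single fixed measures, the cleanest route is to first establish the identity for $0 \le f$, where Tonelli applies with no integrability hypothesis, all the measures being $\sigma$-finite because the groupoids are second countable and locally compact, and then to extend to arbitrary Borel $f$ by the usual decomposition into positive and negative parts together with monotone convergence. One should also check joint measurability of the partially integrated integrands as functions of the remaining variables; this is precisely what the lifting and composition machinery of \cite{BSM} guarantees at each step, so no measurability argument beyond citing those results is needed.
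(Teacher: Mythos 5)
Your proposal is correct and follows essentially the same route as the paper: disintegrate $\mu_P$ through $\lambda_P^{\bullet}$ over $\mu_P^{(0)}$, then $\mu_P^{(0)}$ through $\eta^{\bullet}$ over $\mu_G$, collapse the Dirac factors in the product systems, disintegrate $\mu_G$ through $\lambda_G^{\bullet}$ over $\mu_G^{(0)}$, and finally interchange the $\lambda_S^{s}$- and $\gamma_q^{d(y)}$-integrations. The only cosmetic difference is that the paper justifies this last interchange by citing Proposition 5.6 of \cite{BSM} (for locally finite BSMs, fibred products commute with compositions, i.e. $(\gamma_q *\gamma_p) \circ (\lambda_T * \lambda_S) = (\gamma_q \circ \lambda_T)*(\gamma_p \circ \lambda_S)$), which is exactly the packaged form of the Tonelli-plus-joint-measurability argument you sketch in your final paragraph.
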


\begin{proof}
$$\hspace{-10cm} \int_P f(\sigma,x,\tau) d\mu_P(\sigma,x,\tau) = $$
\begin{eqnarray*}
&=& \int_{P^{(0)}} \int_P f(\sigma,x,\tau) d\lambda_P^{(s,g,t)}(\sigma,x,\tau) d\mu_P^{(0)}(s,g,t) \hspace{3cm} (\text{by Lemma \ref{lem:integrating muG vs muG0}}) \\
&=& \int_G\int_{P^{(0)}}\int_P f(\sigma,x,\tau)d\lambda_P^{(s,g,t)}(\sigma,x,\tau) d\eta^y(s,g,t) d\mu_G(y)
\hspace{1.5cm} (\text{by Lemma \ref{lem:integrating dmuP0}})
\end{eqnarray*}
Rewriting $\eta^y$ by Proposition \ref{prop:eta}, and then rewriting $\lambda_P^{(s,g,t)}$ by Definition \ref{def:Haar for P}, we get
\begin{eqnarray*}
&\!=\!& \int_G\int\!\!\!\!\int\!\!\!\!\int_{S^{(0)}\times G\times T^{(0)}}\int_P f(\sigma,x,\tau) d\lambda_P^{(s,g,t)}(\sigma,x,\tau) d\gamma_p^{r(y)}(s)d\delta_y(g)d\gamma_q^{d(y)}(t)d\mu_G(y) \\
&\!=\!& \int_G\int\!\!\!\!\int\!\!\!\!\int_{S^{(0)}\times G\times T^{(0)}}\int\!\!\!\!\int\!\!\!\!\int_{S\times G\times T} f(\sigma,x,\tau)d\lambda_S^{s}(\sigma)d\delta_g(x)d\lambda_T^{t}(\tau) d\gamma_p^{r(y)}(s)d\delta_y(g)d\gamma_q^{d(y)}(t)d\mu_G(y) \\
&\!=\!& \int_G\int\!\!\!\!\int_{S^{(0)}\times T^{(0)}}\int\!\!\!\!\int_{S\times T} f(\sigma,y,\tau)d\lambda_S^{s}(\sigma)d\lambda_T^{t}(\tau)  d\gamma_p^{r(y)}(s)d\gamma_q^{d(y)}(t)d\mu_G(y)
\end{eqnarray*}
Using Lemma \ref{lem:integrating muG vs muG0} again, followed by Fubini's theorem, we have
\begin{eqnarray*}
&=& \int_{G^{(0)}}\int_G\int\!\!\!\!\int_{S^{(0)}\times T^{(0)}}\int\!\!\!\!\int_{S\times T} f(\sigma,y,\tau)d\lambda_S^{s}(\sigma)d\lambda_T^{t}(\tau)  d\gamma_p^{r(y)}(s)d\gamma_q^{d(y)}(t)d\lambda^u_G(y)d\mu_G^{(0)}(u) \\
&=& \int_{G^{(0)}}\int_G\int_{S^{(0)}}\int_{T^{(0)}}\int_{S}\int_{T} f(\sigma,y,\tau)d\lambda_T^{t}(\tau)d\lambda_S^{s}(\sigma)  d\gamma_q^{d(y)}(t)d\gamma_p^{r(y)}(s)d\lambda^u_G(y)d\mu_G^{(0)}(u)
\end{eqnarray*}
We now invoke Proposition 5.6 from \cite{BSM}, which asserts that for locally finite BSMs, fibred products commute with compositions. We apply this theorem to the following diagram (it is straightforward to verify that the conditions for the proposition indeed hold. In particular, $\lambda_S^{\bullet}$ and $\lambda_T^{\bullet}$ are locally bounded). We obtain that $(\gamma_q *\gamma_p) \circ (\lambda_T * \lambda_S) = (\gamma_q \circ \lambda_T)*(\gamma_p \circ \lambda_S)$.

$$
\xy \xymatrix@R=.2in@C=.2in{ & T*S\ar[rr]^{r_T *r_S}_{(\lambda_T *\lambda_S)^{\bullet}}\ar@{-}[dd]\ar[ddl]&&T^{(0)}*S^{(0)}\ar[rr]^{q*p}_{(\gamma_q *\gamma_p)^{\bullet}}\ar@{-}[dd]\ar[ddl]&&G^{(0)}*G^{(0)}\ar[dddd]\ar[ddl]\\%
                          \hspace{.75in}&&\hspace{.5in}&&&\hspace{.75in}\\%
              S\ar[rr]^(.65){r_S}_(.65){\lambda_S^{\bullet}}\ar[dddd]   &{\ }^{\ }\ar[dd]&S^{(0)}\ar[rr]^(.65){p}_(.65){\gamma_p^{\bullet}}\ar[dddd]&{\ }^{\ } \ar[dd]&G^{(0)}\ar[dddd]&\vspace{-2in}\\%
              &&&&&\\%
                          &T\ar@{-}[r]^(.65){r_T}_(.65){\lambda_T^{\bullet}}  \ar[ddl] &\hspace{.06in}\ar[r]&T^{(0)}\ar@{-}[r]^(.65){q}_(.65){\gamma_q^{\bullet}}\ar[ddl]&\hspace{.06in}\ar[r]&G^{(0)}\ar[ddl]\\%
                          &&&&&\\%
             \underline{G}\ar[rr]^{\scriptstyle\rm id}&&\underline{G}\ar[rr]^{\scriptstyle\rm id}&&\underline{G}&} \endxy%
$$

\noindent Therefore, returning to our main calculation, we get
\begin{eqnarray*}
&=& \int_{G^{(0)}}\int_G\int_{S^{(0)}}\int_{S}\int_{T^{(0)}}\int_{T} f(\sigma,y,\tau)d\lambda_T^{t}(\tau)d\gamma_q^{d(y)}(t) d\lambda_S^{s}(\sigma) d\gamma_p^{r(y)}(s)d\lambda^u_G(y)d\mu_G^{(0)}(u)
\end{eqnarray*}
This completes the proof.
\end{proof}

\begin{prop}\label{prop:muP is quasi invariant}
The measure $\mu_P^{(0)}$ is quasi-invariant with respect to $\lambda_P^{\bullet}$.
\end{prop}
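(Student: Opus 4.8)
The plan is to unwind Definition \ref{def:quasi-invariant measure} and reduce everything to the explicit formula for $\int_P f\,d\mu_P$ furnished by Lemma \ref{lem:expanding integral muP}. By that definition I must show the induced measure satisfies $\mu_P\sim\mu_P^{-1}$. Exactly as in Remark \ref{rem:relating muG to its inverse} (applied to $P$ in place of $G$), for any Borel $f$ one has $\int_P f\,d\mu_P^{-1}=\int_P f(x^{-1})\,d\mu_P(x)$, where $x^{-1}=(\sigma,y,\tau)^{-1}=(\sigma^{-1},\,p(\sigma)^{-1}yq(\tau),\,\tau^{-1})$. Hence it suffices to produce a Borel function $\Phi$ on $P$, strictly positive $\mu_P$-a.e., with
\begin{equation*}
\int_P f(x^{-1})\,d\mu_P(x)=\int_P f(x)\,\Phi(x)\,d\mu_P(x)
\end{equation*}
for every non-negative Borel $f$. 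Strict positivity of $\Phi$ then yields simultaneously $\mu_P^{-1}\ll\mu_P$ and $\mu_P\ll\mu_P^{-1}$ (with density $1/\Phi$), i.e. $\mu_P\sim\mu_P^{-1}$; the function $\Phi$ is the inverse modular function $\Delta_P^{-1}$ of $\mu_P$.

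I would then substitute the inverse formula into the six-fold iterated integral of Lemma \ref{lem:expanding integral muP} and transform it back into one of the same shape by a sequence of changes of variables, each governed by one structural ingredient:
\begin{enumerate}
\item left invariance of $\lambda_G^{\bullet}$ absorbs, at no cost, the left translation by $p(\sigma)^{-1}$ occurring in the middle coordinate $p(\sigma)^{-1}yq(\tau)$;
\item the right translation by $q(\tau)$ in that coordinate is controlled, via the quasi invariance of $\mu_G$ and the modular identity (\ref{eq:Delta inverse}) for $\mu_G$, producing a factor $\Delta_G(q(\tau))$;
\item the substitutions $\sigma\mapsto\sigma^{-1}$ and $\tau\mapsto\tau^{-1}$ demanded by the outer coordinates are carried out through the modular identity (\ref{eq:Delta inverse}) applied to $\mu_S$ and to $\mu_T$, each contributing $\Delta_S^{-1}(\sigma)$, resp. $\Delta_T^{-1}(\tau)$.
\end{enumerate}

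The step I expect to be the main obstacle is that these three integrations are genuinely coupled and cannot be rearranged by na\"ive Fubini. The supports of the $\sigma$- and $\tau$-integrations are those of $\beta_p^{r(y)}=(\gamma_p\circ\lambda_S)^{r(y)}$ and $\beta_q^{d(y)}=(\gamma_q\circ\lambda_T)^{d(y)}$, hence depend on $r(y)$ and $d(y)$; but the very translation $y\mapsto p(\sigma)^{-1}yq(\tau)$ that one wants to perform moves $y$ and thereby alters those supports. The decoupling is precisely what Lemma \ref{lem:commuting triple integrals}, together with its mirror image on the $T$-side, is designed to supply: it interchanges the $\lambda_G^{\bullet}$-integration with the $(\gamma_p\circ\lambda_S)^{\bullet}$- and $(\gamma_q\circ\lambda_T)^{\bullet}$-integrations while respecting the constraints $r(y)=p(r_S(\sigma))$ and $d(y)=q(r_T(\tau))$. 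Local boundedness of $\lambda_S^{\bullet},\lambda_T^{\bullet}$ and (under Assumption \ref{ass:LB disintegration}) of $\gamma_p^{\bullet},\gamma_q^{\bullet}$ is what keeps all the compositions and fibred products of systems of measures well behaved during this manipulation.

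After decoupling I would pull $\sigma$ (resp. $\tau$) outside the $y$-integration, use left invariance of $\lambda_G^{\bullet}$ to turn $p(\sigma)^{-1}yq(\tau)$ into a free variable of $G$, invert the $S$- and $T$-variables via (\ref{eq:Delta inverse}), and recombine into the form of Lemma \ref{lem:expanding integral muP}. Collecting the Jacobian factors leaves a single function $\Phi=\Delta_P^{-1}$, so that one is led to the modular function $\Delta_P(\sigma,y,\tau)=\Delta_S(\sigma)\,\Delta_T(\tau)\,\Delta_G(q(\tau))^{-1}$, which is manifestly a strictly positive homomorphism $P\to\mathbb{R}^{\times}_+$ (homomorphism being checked directly from the product in $P$ using that $q$ is a homomorphism). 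As a bookkeeping check, in the degenerate case $S=T=G$, $p=q=\mathrm{id}$ (where $\gamma_p^{\bullet},\gamma_q^{\bullet}$ are Dirac systems and $P$ is the transformation groupoid of $G\times G$ acting on $G$ by $(\sigma,\tau)\cdot y=\sigma y\tau^{-1}$) the factor $\Delta_G(q(\tau))$ cancels $\Delta_T(\tau)$ and one obtains $\Delta_P(\sigma,y,\tau)=\Delta_G(\sigma)$, which one can verify independently; either way the essential output is that $\Phi$ is strictly positive, which is all that quasi invariance requires.
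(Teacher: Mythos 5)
Your proposal is correct and follows essentially the same route as the paper: reduce to exhibiting a strictly positive density $\Lambda=\Delta_P^{-1}$, expand $\mu_P^{-1}(E)$ via Lemma \ref{lem:expanding integral muP}, decouple the coupled integrations with Lemma \ref{lem:commuting triple integrals} (and its $T$-side analogue), and trade the inversions for the modular factors coming from left invariance of $\lambda_G^{\bullet}$ and the quasi-invariance of $\mu_S^{(0)}$, $\mu_G^{(0)}$ and $\mu_T^{(0)}$, arriving at the same $\Delta_P(\sigma,y,\tau)=\Delta_S(\sigma)\Delta_T(\tau)/\Delta_G(q(\tau))$. The one step the paper makes explicit that you elide is that the whole computation is first carried out for elementary open sets $E=(A\times B\times C)\cap P$, where $\chi_{_E}$ factors and the various terms can be pulled in and out of the iterated integrals, and is then extended to arbitrary Borel sets by the locally-finite-measures-agreeing-on-a-basis argument (Lemma 2.24 of \cite{BSM}).
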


\begin{proof}
By definition \ref{def:quasi-invariant measure}, we need to show that $\mu_P$ and $\mu_P^{-1}$ are mutually absolutely continuous. We recall from Definition \ref{def:induced measure muG} that $\mu_P$ is the induced measure, defined for any Borel set $E \subseteq P$ by $\mu_P (E) = \int_{P^{(0)}} \lambda_P^v (E) d\mu_P^{(0)} (v)$, and $\mu_P^{-1}$ is its image under inversion, i.e. $\mu_P^{-1}(E) = \mu_P(E^{-1})$. We will prove:

\noindent \underline{Claim}: There exists a function $\Lambda:P \rightarrow \mathbb{R}$ satisfying $\Lambda(\alpha) >0$ $\mu_P$-a.e., such that for any Borel set $E \subseteq P$, $\displaystyle \mu_P^{-1}(E)=\int_P \chi_{_E}(\alpha) \Lambda(\alpha) d\mu_P(\alpha).$

\noindent It will then follow that $\mu_P \sim \mu_P^{-1}$, since $\mu_P(E)=\int_P \chi_{_E}(\alpha) d\mu_P(\alpha)$. In fact, $\Delta = \Lambda^{-1}$ will be the modular function of $\mu_P$.

We first prove the claim for elementary open subsets of the form $E=(A\times B\times C)\cap P$, where $A\subseteq S$, $B\subseteq G$ and $C\subseteq T$. Note that the characteristic function $\chi_{_E}$ is the restriction of the product $\chi_{_A}\cdot\chi_{_B}\cdot\chi_{_C}$ to $P$. \\

\noindent We denote $\alpha = (\sigma,x,\tau) \in P$ and $v = (s,g,t) \in P^{(0)}$. By Lemma \ref{lem:expanding integral muP}:
$$\hspace{-7cm} \mu_P^{-1}(E) \ = \ \mu_P(E^{-1}) \ = \ \int_P \chi_{_{E^{-1}}}(\sigma,x,\tau) d\mu_P(\sigma,x,\tau) $$
\begin{eqnarray*}
&=& \int_{G^{(0)}}\int_G\int_{S^{(0)}}\int_{S}\int_{T^{(0)}}\int_{T} \chi_{_{E^{-1}}}(\sigma,y,\tau)d\lambda_T^{t}(\tau)d\gamma_q^{d(y)}(t) \\
& & \hspace{8.3cm} d\lambda_S^{s}(\sigma) d\gamma_p^{r(y)}(s)d\lambda^u_G(y)d\mu_G^{(0)}(u) \\
&=& \int_{G^{(0)}}\int_G\int_{S^{(0)}}\int_{S}\int_{T^{(0)}}\int_{T} \chi_{_E}(\sigma^{-1},p(\sigma)^{-1} y q(\tau),\tau^{-1})d\lambda_T^{t}(\tau)d\gamma_q^{d(y)}(t) \\
& & \hspace{8.3cm} d\lambda_S^{s}(\sigma) d\gamma_p^{r(y)}(s)d\lambda^u_G(y)d\mu_G^{(0)}(u) \\
&=& \int_{G^{(0)}}\int_G\int_{S^{(0)}}\int_{S}\int_{T^{(0)}}\int_{T} \chi_{_A}(\sigma^{-1})\chi_{_B}(p(\sigma)^{-1}y q(\tau))\chi_{_C}(\tau^{-1})d\lambda_T^{t}(\tau)d\gamma_q^{d(y)}(t) \\
& & \hspace{8.3cm} d\lambda_S^{s}(\sigma) d\gamma_p^{r(y)}(s)d\lambda^u_G(y)d\mu_G^{(0)}(u)
\end{eqnarray*}
Using Lemma \ref{lem:commuting triple integrals}, we obtain
\begin{eqnarray*}
&=& \int_{G^{(0)}}\int_{S^{(0)}}\int_{S}\int_G\int_{T^{(0)}}\int_{T} \chi_{_A}(\sigma^{-1}) \chi_{_B}(p(\sigma)^{-1}yq(\tau))\chi_{_C}(\tau^{-1})d\lambda_T^{t}(\tau)d\gamma_q^{d(y)}(t) \\
& & \hspace{8.3cm} d\lambda^{p(r(\sigma))}_G(y)d\lambda_S^{s}(\sigma) d\gamma_p^{u}(s)d\mu_G^{(0)}(u) \\
&=& \int_{G^{(0)}}\int_{S^{(0)}}\int_{S}\chi_{_A}(\sigma^{-1})\int_G\int_{T^{(0)}}\int_{T} \chi_{_B}(p(\sigma)^{-1}yq(\tau))\chi_{_C}(\tau^{-1})d\lambda_T^{t}(\tau)d\gamma_q^{d(y)}(t) \\
& & \hspace{8.3cm} d\lambda^{p(r(\sigma))}_G(y)d\lambda_S^{s}(\sigma) d\gamma_p^{u}(s)d\mu_G^{(0)}(u)
\end{eqnarray*}
Let $f_1$ be a function on $G$, defined by the formula
$$f_1(y)=\int_{T^{(0)}}\int_{T} \chi_{_B}(yq(\tau))\chi_{_C}(\tau^{-1})d\lambda_T^{t}(\tau)d\gamma_q^{d(y)}(t).$$
From Lemma 7.3 of \cite{BSM} we know that a system of measures $\lambda^{\bullet}$ on a groupoid $G$ is left invariant if and only if for any $x \in G$ and every non-negative Borel function $f$ on $G$,
\begin{equation}\label{eq:left invariance}
\int f(xy)d\lambda^{d(x)}(y) = \int f(y)d\lambda^{r(x)}(y).
\end{equation}
This implies, using $x = p(\sigma)^{-1}$ and the above $f_1$, that
$$\int_G f_1(p(\sigma)^{-1}y)d\lambda^{p(r(\sigma))}_G=\int_G f_1(y)d\lambda^{p(d(\sigma))}_G.$$ Therefore, returning to our main calculation and noting that $d(p(\sigma)^{-1}y) = d(y)$, we have
\begin{eqnarray*}
\mu_P^{-1}(E) &=& \int_{G^{(0)}}\int_{S^{(0)}}\int_{S}\chi_{A}(\sigma^{-1})\int_G\int_{T^{(0)}}\int_{T} \chi_{_B}(y q(\tau))\chi_{_C}(\tau^{-1})d\lambda_T^{t}(\tau)d\gamma_q^{d(y)}(t) \\
& & \hspace{8.3cm} d\lambda^{p(d(\sigma))}_G(y)d\lambda_S^{s}(\sigma) d\gamma_p^{u}(s)d\mu_G^{(0)}(u)
\end{eqnarray*}
Using the fact that $\gamma_p^{\bullet}$ is a disintegration of $\mu_S^{(0)}$ with respect to $\mu_G^{(0)}$, followed by Lemma \ref{lem:integrating muG vs muG0}, we get
\begin{eqnarray*}
&=& \int_{S^{(0)}}\int_{S}\chi_{A}(\sigma^{-1})\int_G\int_{T^{(0)}}\int_{T} \chi_{_B}(y q(\tau))\chi_{_C}(\tau^{-1})d\lambda_T^{t}(\tau)d\gamma_q^{d(y)}(t) d\lambda^{p(d(\sigma))}_G(y) d\lambda_S^{s}(\sigma) d\mu_S^{(0)}(s) \\
&=& \int_{S}\chi_{A}(\sigma^{-1})\int_G\int_{T^{(0)}}\int_{T} \chi_{_B}(y q(\tau))\chi_{_C}(\tau^{-1})d\lambda_T^{t}(\tau)d\gamma_q^{d(y)}(t) d\lambda^{p(d(\sigma))}_G(y) d\mu_S(\sigma) \\
&=& \int_{S}\int_G\int_{T^{(0)}}\int_{T} \chi_{A}(\sigma^{-1})\chi_{_B}(y q(\tau))\chi_{_C}(\tau^{-1})d\lambda_T^{t}(\tau)d\gamma_q^{d(y)}(t) d\lambda^{p(d(\sigma))}_G(y) d\mu_S(\sigma)
\end{eqnarray*}
The measure $\mu_S^{(0)}$ is quasi-invariant. Therefore, formula (\ref{eq:Delta inverse}) of Remark \ref{rem:Delta} permits us to replace $\sigma^{-1}$ by $\sigma$ at the price of inserting $\Delta_S^{-1}(\sigma)$:
\begin{eqnarray*}
&=& \int_{S} \int_G\int_{T^{(0)}}\int_{T} \chi_{A}(\sigma) \chi_{_B}(y q(\tau))\chi_{_C}(\tau^{-1})\Delta_S^{-1}(\sigma)d\lambda_T^{t}(\tau)d\gamma_q^{d(y)}(t) d\lambda^{p(r(\sigma))}_G(y) d\mu_S(\sigma)
\end{eqnarray*}
Re-expanding $d\mu_S$ and then using Lemma \ref{lem:commuting triple integrals} again, followed by Lemma \ref{lem:integrating muG vs muG0}, we have
\begin{eqnarray*}
&=& \int_{G^{(0)}}\int_{S^{(0)}}\int_{S} \int_G\int_{T^{(0)}}\int_{T} \chi_{A}(\sigma) \chi_{_B}(y q(\tau))\chi_{_C}(\tau^{-1}) \Delta_S^{-1}(\sigma) d\lambda_T^{t}(\tau)d\gamma_q^{d(y)}(t) \\
& & \hspace{8.3cm} d\lambda^{p(r(\sigma))}_G(y) d\lambda_S^{s}(\sigma) d\gamma_p^{u}(s)d\mu_G^{(0)}(u) \\
&=& \int_{G^{(0)}}\int_G\int_{S^{(0)}}\int_{S}\int_{T^{(0)}}\int_{T} \chi_{_A}(\sigma)\chi_{_B}(y q(\tau))\chi_{_C}(\tau^{-1})\Delta_S^{-1}(\sigma)d\lambda_T^{t}(\tau)d\gamma_q^{d(y)}(t) \\
& & \hspace{9cm} d\lambda_S^{s}(\sigma) d\gamma_p^{r(y)}(s)d\lambda^{u}_G(y)d\mu_G^{(0)}(u) \\
&=& \int_G\int_{S^{(0)}}\int_{S}\int_{T^{(0)}}\int_{T} \chi_{_A}(\sigma)\chi_{_B}(y q(\tau))\chi_{_C}(\tau^{-1})\Delta_S^{-1}(\sigma)d\lambda_T^{t}(\tau)d\gamma_q^{d(y)}(t) \\
& & \hspace{10cm} d\lambda_S^{s}(\sigma) d\gamma_p^{r(y)}(s)d\mu_G(y)
\end{eqnarray*}
We now use the quasi-invariance of $\mu_G^{(0)}$ and formula (\ref{eq:Delta inverse}) of Remark \ref{rem:Delta} to write
\begin{eqnarray*}
&=& \int_G\int_{S^{(0)}}\int_{S}\int_{T^{(0)}}\int_{T}\chi_{_A}(\sigma) \chi_{_B}(y^{-1} q(\tau))\chi_{_C}(\tau^{-1})\Delta_S^{-1}(\sigma)\Delta_G^{-1}(y)d\lambda_T^{t}(\tau)d\gamma_q^{r(y)}(t) \\
& & \hspace{10cm} d\lambda_S^{s}(\sigma) d\gamma_p^{d(y)}(s)d\mu_G(y)
\end{eqnarray*}
Next, we apply the characterization (\ref{eq:characterization of composition}) preceding Lemma \ref{lem:commuting triple integrals} above to the compositions $\xymatrix{S\ar [rr]^{r_S}_{\lambda_S^{\bullet}}&&S^{(0)}\ar [rr]^{p}_{\gamma_p^{\bullet}}&&G^{(0)}}$ and $\xymatrix{T\ar [rr]^{r_T}_{\lambda_T^{\bullet}}&&T^{(0)}\ar [rr]^{q}_{\gamma_q^{\bullet}}&&G^{(0)}}$. We obtain
\begin{eqnarray*}
&=& \int_G\int_{S}\int_{T}\chi_{_A}(\sigma) \chi_{_B}(y^{-1} q(\tau))
\chi_{_C}(\tau^{-1})\Delta_S^{-1}(\sigma)\Delta_G^{-1}(y)d(\gamma_q \circ \lambda_T)^{r(y)}(\tau) \\
& & \hspace{10.8cm} d(\gamma_p \circ \lambda_S)^{d(y)}(\sigma)d\mu_G(y)
\end{eqnarray*}
We can now use Fubini's theorem, after which we re-expand the compositions as well as $\mu_G$:
\begin{eqnarray*}
&=& \int_G\int_{T}\int_{S}\chi_{_A}(\sigma)\chi_{_B}(y^{-1} q(\tau))\chi_{_C}(\tau^{-1})\Delta_S^{-1}(\sigma)\Delta_G^{-1}(y)d(\gamma_p \circ \lambda_S)^{d(y)}(\sigma) \\
& & \hspace{10.8cm} d(\gamma_q \circ \lambda_T)^{r(y)}(\tau) d\mu_G(y) \\
&=& \int_{G^{(0)}}\int_G\int_{T^{(0)}}\int_{T}\int_{S^{(0)}}\int_{S}\chi_{_A}(\sigma)\chi_{_B}(y^{-1} q(\tau))\chi_{_C}(\tau^{-1})\Delta_S^{-1}(\sigma)\Delta_G^{-1}(y)d\lambda_S^{s}(\sigma) d\gamma_p^{d(y)}(s) \\
& & \hspace{9.5cm} d\lambda_T^{t}(\tau)d\gamma_q^{r(y)}(t)d\lambda^{u}_G(y)d\mu_G^{(0)}(u)
\end{eqnarray*}
By Lemma \ref{lem:commuting triple integrals} with $T,t,\tau$ and $q$ in place of $S,s,\sigma$ and $p$, we get
\begin{eqnarray*}
&=& \int_{G^{(0)}}\int_{T^{(0)}}\int_{T}\int_G\int_{S^{(0)}}\int_{S}\chi_{_A}(\sigma)\chi_{_B}(y^{-1} q(\tau))\chi_{_C}(\tau^{-1})\Delta_S^{-1}(\sigma)\Delta_G^{-1}(y)d\lambda_S^{s}(\sigma) d\gamma_p^{d(y)}(s) \\
& & \hspace{9.3cm} d\lambda_G^{q(r(\tau))}(y)d\lambda_T^{t}(\tau)d\gamma_q^{u}(t)d\mu_G^{(0)}(u) \\
&=& \int_{G^{(0)}}\int_{T^{(0)}}\int_{T}\int_G\int_{S^{(0)}}\int_{S}\chi_{_A}(\sigma)\chi_{_B^{-1}} (q(\tau)^{-1}y)\chi_{_C}(\tau^{-1})\Delta_S^{-1}(\sigma)\Delta_G^{-1}(y)d\lambda_S^{s}(\sigma) d\gamma_p^{d(y)}(s) \\
& & \hspace{9.3cm} d\lambda_G^{q(r(\tau))}(y)d\lambda_T^{t}(\tau)d\gamma_q^{u}(t)d\mu_G^{(0)}(u)
\end{eqnarray*}
Let $f_2$ be a function on $G$, defined by the formula
$$f_2(y)=\int_{S^{(0)}}\int_{S}\chi_{_A}(\sigma)\chi_{_B^{-1}} (y)\chi_{_C}(\tau^{-1})\Delta_S^{-1}(\sigma)\Delta_G^{-1}(q(\tau))\Delta_G^{-1}(y)d\lambda_S^{s}(\sigma) d\gamma_p^{d(y)}(s).$$ Using $x = q(\tau)^{-1}$ and $f_2$ in Equation (\ref{eq:left invariance}) above, we obtain that
$$\int_G f_2(q(\tau)^{-1}y)d\lambda^{q(r(\tau))}_G=\int_G f_2(y)d\lambda^{q(d(\tau))}_G.$$ Recall that we take $\Delta_G$ to be a groupoid homomorphism (see Remark \ref{rem:Delta}). Therefore, $\Delta_G^{-1}(q(\tau))\Delta_G^{-1}(q(\tau)^{-1}y) = \Delta_G^{-1}(q(\tau))\Delta_G^{-1}(q(\tau)^{-1})\Delta_G^{-1}(y) = \Delta_G^{-1}(y)$. Hence, noting also that $d(q(\tau)^{-1}y) = d(y)$, the left hand side of the above equality gives precisely the last line of our main calculation. From the right hand side we then get
\begin{eqnarray*}
\mu_P^{-1}(E) &=& \int_{G^{(0)}}\int_{T^{(0)}}\int_{T}\int_G\int_{S^{(0)}}\int_{S}\chi_{_A}(\sigma)\chi_{_{B^{-1}}}(y)\chi_{_C}(\tau^{-1})
\Delta_S^{-1}(\sigma)\Delta_G^{-1}(q(\tau))\Delta_G^{-1}(y) \\
& & \hspace{5cm} d\lambda_S^{s}(\sigma) d\gamma_p^{d(y)}(s) d\lambda_G^{q(d(\tau))}(y)d\lambda_T^{t}(\tau)d\gamma_q^{u}(t)d\mu_G^{(0)}(u)
\end{eqnarray*}
From the fact that $\gamma_q^{\bullet}$ is a disintegration of $\mu_T^{(0)}$ with respect to $\mu_G^{(0)}$, followed by Lemma \ref{lem:integrating muG vs muG0}, we get
\begin{eqnarray*}
&=& \int_{T^{(0)}}\int_{T}\int_G\int_{S^{(0)}}\int_{S}\chi_{_A}(\sigma)\chi_{_{B^{-1}}}(y)\chi_{_C}(\tau^{-1})
\Delta_S^{-1}(\sigma)\Delta_G^{-1}(q(\tau))\Delta_G^{-1}(y) \\
& & \hspace{8cm} d\lambda_S^{s}(\sigma) d\gamma_p^{d(y)}(s) d\lambda_G^{q(d(\tau))}(y)d\lambda_T^{t}(\tau)d\mu_T^{(0)}(t) \\
&=& \int_{T}\int_G\int_{S^{(0)}}\int_{S}\chi_{_A}(\sigma)\chi_{_{B^{-1}}}(y)\chi_{_C}(\tau^{-1})
\Delta_S^{-1}(\sigma)\Delta_G^{-1}(q(\tau))\Delta_G^{-1}(y) \\
& & \hspace{9.3cm}d\lambda_S^{s}(\sigma)  d\gamma_p^{d(y)}(s) d\lambda_G^{q(d(\tau))}(y)d\mu_T(\tau)
\end{eqnarray*}
Using the quasi-invariance of $\mu_T^{(0)}$ and formula (\ref{eq:Delta inverse}) of Remark \ref{rem:Delta} gives
\begin{eqnarray*}
&=& \int_{T}\int_G\int_{S^{(0)}}\int_{S}\chi_{_A}(\sigma)\chi_{_{B^{-1}}}(y)\chi_{_C}(\tau)
\Delta_S^{-1}(\sigma)\Delta_G^{-1}(q(\tau)^{-1})\Delta_G^{-1}(y)\Delta_T^{-1}(\tau) \\
& & \hspace{9cm} d\lambda_S^{s}(\sigma) d\gamma_p^{d(y)}(s) d\lambda_G^{q(r(\tau))}(y)d\mu_T(\tau)
\end{eqnarray*}
Re-expanding $d\mu_T$ we get:
\begin{eqnarray*}
&=& \int_{G^{(0)}}\int_{T^{(0)}}\int_{T}\int_G\int_{S^{(0)}}\int_{S}\chi_{_A}(\sigma)\chi_{_{B^{-1}}}(y)\chi_{_C}(\tau)
\Delta_S^{-1}(\sigma)\Delta_G^{-1}(q(\tau)^{-1})\Delta_G^{-1}(y)\Delta_T^{-1}(\tau) \\
& & \hspace{6.5cm} d\lambda_S^{s}(\sigma) d\gamma_p^{d(y)}(s) d\lambda_G^{q(r(\tau))}(y)d\lambda_T^{t}(\tau)d\gamma_q^{u}(t)d\mu_G^{(0)}(u)
\end{eqnarray*}
We invoke Lemma \ref{lem:commuting triple integrals} once again, with $T,t,\tau$ and $q$ in place of $S,s,\sigma$ and $p$. We obtain
\begin{eqnarray*}
&=& \int_{G^{(0)}}\int_G\int_{T^{(0)}}\int_{T}\int_{S^{(0)}}\int_{S}\chi_{_A}(\sigma)\chi_{_{B^{-1}}}(y)\chi_{_C}(\tau)
\Delta_S^{-1}(\sigma)\Delta_G^{-1}(q(\tau)^{-1})\Delta_G^{-1}(y)\Delta_T^{-1}(\tau) \\
& & \hspace{6cm} d\lambda_S^{s}(\sigma) d\gamma_p^{d(y)}(s) d\lambda_T^{t}(\tau)d\gamma_q^{r(y)}(t)d\lambda^{u}_G(y)d\mu_G^{(0)}(u)
\end{eqnarray*}
By Lemma \ref{lem:integrating muG vs muG0} this equals
\begin{eqnarray*}
&=& \int_G\int_{T^{(0)}}\int_{T}\int_{S^{(0)}}\int_{S}\chi_{_A}(\sigma)\chi_{_{B^{-1}}}(y)\chi_{_C}(\tau)
\Delta_S^{-1}(\sigma)\Delta_G^{-1}(q(\tau)^{-1})\Delta_G^{-1}(y)\Delta_T^{-1}(\tau) \\
& & \hspace{7cm} d\lambda_S^{s}(\sigma)d\gamma_p^{d(y)}(s) d\lambda_T^{t}(\tau)d\gamma_q^{r(y)}(t)d\mu_G(y)
\end{eqnarray*}
We once again now use the quasi-invariance of $\mu_G^{(0)}$ and formula (\ref{eq:Delta inverse}) of Remark \ref{rem:Delta} to write
\begin{eqnarray*}
&=& \int_G\int_{T^{(0)}}\int_{T}\int_{S^{(0)}}\int_{S}\chi_{_A}(\sigma)\chi_{_{B}}(y)\chi_{_C}(\tau)
\Delta_S^{-1}(\sigma)\Delta_G^{-1}(q(\tau)^{-1})\Delta_G^{-1}(y^{-1})\Delta_G^{-1}(y)\Delta_T^{-1}(\tau) \\
& & \hspace{8cm} d\lambda_S^{s}(\sigma)d\gamma_p^{r(y)}(s) d\lambda_T^{t}(\tau)d\gamma_q^{d(y)}(t)d\mu_G(y)
\end{eqnarray*}
Returning to $\chi_{_E}$ and using Lemma \ref{lem:integrating muG vs muG0}, we get
\begin{eqnarray*}
&=& \int_{G^{(0)}}\int_G\int_{T^{(0)}}\int_{T}\int_{S^{(0)}}\int_{S}\chi_{_E}(\sigma, y, \tau)
\Delta_S^{-1}(\sigma)\Delta_G^{-1}(q(\tau)^{-1})\Delta_T^{-1}(\tau) d\lambda_S^{s}(\sigma)d\gamma_p^{r(y)}(s) \\
& & \hspace{9cm}  d\lambda_T^{t}(\tau)d\gamma_q^{d(y)}(t)d\lambda^{u}_G(y)d\mu_G^{(0)}(u) \\
\end{eqnarray*}
As we argued earlier in this calculation, we can change the order of integration:
\begin{eqnarray*}
&=& \int_{G^{(0)}}\int_G\int_{S^{(0)}}\int_{S}\int_{T^{(0)}}\int_{T}\chi_{_E}(\sigma, y, \tau)
\Delta_S^{-1}(\sigma)\Delta_G^{-1}(q(\tau)^{-1})\Delta_T^{-1}(\tau) d\lambda_T^{t}(\tau)d\gamma_q^{d(y)}(t) \\
& & \hspace{9cm} d\lambda_S^{s}(\sigma)d\gamma_p^{r(y)}(s) d\lambda^{u}_G(y)d\mu_G^{(0)}(u)
\end{eqnarray*}
Finally, we define $\Lambda(\sigma, y, \tau) = \Delta_S^{-1}(\sigma)\Delta_G^{-1}(q(\tau)^{-1})\Delta_T^{-1}(\tau)$. We get:
\begin{eqnarray*}
\mu_P^{-1}(E) &=& \int_{G^{(0)}}\int_G\int_{S^{(0)}}\int_{S}\int_{T^{(0)}}\int_{T}\chi_{_E}(\sigma, y, \tau)
\Lambda(\sigma, y, \tau) d\lambda_T^{t}(\tau)d\gamma_q^{d(y)}(t) \\
& & \hspace{8cm} d\lambda_S^{s}(\sigma)d\gamma_p^{r(y)}(s) d\lambda^{u}_G(y)d\mu_G^{(0)}(u)
\end{eqnarray*}
By Lemma \ref{lem:expanding integral muP} this equals $\displaystyle \int_P \chi_{_E}(\sigma, x, \tau) \Lambda(\sigma, x, \tau) d\mu_P(\sigma, x, \tau)$, proving the claim for any elementary open set. In order to complete the proof, we need to show that the claim holds for any Borel set $E \subseteq P$. For this, we will
invoke Lemma 2.24 of \cite{BSM}, as in the proof of Proposition \ref{prop:left invariance of lambda P}. For any Borel subset $E$, we define $$\mu(E) = \mu_P^{-1}(E) \qquad \text{ and } \qquad \nu(E)=\int_P \chi_{_E}(\alpha) \Lambda(\alpha) d\mu_P(\alpha).$$ As in Lemma \ref{lem:properties of induced measure}, since $\mu_P^{(0)}$ is locally finite and $\lambda_P^{\bullet}$ is a continuous Haar system, the induced measure $\mu_P$ is locally finite, hence so is the measure $\mu$. Thus $\nu$ is locally finite as well, since $\mu(E)=\nu(E)$ for any elementary open set $E$, and these sets constitute a basis $\mathcal{B}$ for the topology of $P$ by Remark \ref{rem:elementary open subsets of P}. Finally, $\mu$ and $\nu$ agree on finite intersections of sets in $\mathcal{B}$ as these are themselves elementary open sets, so Lemma 2.24 of \cite{BSM} implies that $\mu(E)=\nu(E)$ for all Borel sets. The proof is complete.
\end{proof}

\begin{rem}\label{rem:DeltaP}
In particular, it follows from the above calculation that the modular function of $\mu_P$ is given by
$\Delta_P (\sigma, x, \tau)= \Delta_S(\sigma) \Delta_T(\tau)/\Delta_G(q(\tau)).$
\end{rem}

\section{The weak pullback of Haar groupoids}\label{sec:WPB for HG}

We return to the weak pullback diagram, which we have now completed:
\[
\begin{array}{cccc}
\xymatrix{& & P \ar[dl]_{\pi_S} \ar[dr]^{\pi_T} & \!\!\!\!\!\!\!\!\!\!\!\!\! \scriptstyle{\lambda_P^{\bullet}, \ \mu_P^{(0)}} \\ \scriptstyle{\lambda_S^{\bullet}, \ \mu_S^{(0)}} \!\!\!\!\!\!\!\!\!\!\!\!\!\!\! & S \ar[dr]_p
& & T \ar[dl]^q
& \!\!\!\!\!\!\!\!\!\!\!\! \scriptstyle{\lambda_T^{\bullet}, \ \mu_T^{(0)}} \\ & & G & \!\!\!\!\!\!\!\!\!\!\!\!\! \scriptstyle{\lambda_G^{\bullet}, \ \mu_G^{(0)}} \\}
\end{array}
\]
In order for $(P,\lambda_P^{\bullet},\mu_P^{(0)})$ to indeed be the weak pullback in the category $\mathcal{HG}$, it must be a Haar groupoid in the sense of Definition \ref{def:Haar groupoid}, and the maps $\pi_S:P \rightarrow S$ and $\pi_T:P \rightarrow T$ need to be homomorphisms of Haar groupoids in the sense of Definition \ref{def:Haar groupoid homomorphism}. The first fact is an immediate corollary of Theorem \ref{thm:Haar system for P} and Proposition \ref{prop:muP is quasi invariant}. The second fact is proved below.

\begin{cor}
The groupoid $(P,\lambda_P^{\bullet},\mu_P^{(0)})$ is a Haar groupoid.
\end{cor}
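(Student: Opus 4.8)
The plan is to verify the three conditions of Definition \ref{def:Haar groupoid} one at a time, with almost all of the work already done in the preceding sections. Condition (1), that $P$ is locally compact, second countable and Hausdorff, is exactly Lemma \ref{lem:P is LC+T2+2nd}. Condition (2), the existence of a continuous left Haar system, is Theorem \ref{thm:Haar system for P}, which produces $\lambda_P^{\bullet}$. For the measure-theoretic condition (3), Proposition \ref{prop:muP0 is Radon} (under Assumption \ref{ass:LB disintegration}) gives that $\mu_P^{(0)}$ is Radon, and Proposition \ref{prop:muP is quasi invariant} gives that it is quasi-invariant with respect to $\lambda_P^{\bullet}$. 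Thus the corollary reduces to the single remaining point demanded by Definition \ref{def:Haar groupoid}: that $\mu_P^{(0)}$ is \emph{non-zero}, which is the only part needing an argument beyond a citation.

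To establish $\mu_P^{(0)} \neq 0$, I would show $\mu_P^{(0)}(P^{(0)}) > 0$. Applying Lemma \ref{lem:eta^x(E)} with $A = S^{(0)}$, $B = G$, $C = T^{(0)}$ gives $\eta^x(P^{(0)}) = \gamma_p^{r(x)}(S^{(0)})\,\gamma_q^{d(x)}(T^{(0)})$, so by the definition of $\mu_P^{(0)}$ followed by Lemma \ref{lem:integrating muG vs muG0},
\begin{equation*}
\mu_P^{(0)}(P^{(0)}) = \int_{G^{(0)}} \gamma_p^{u}(S^{(0)}) \left( \int_{G^{u}} \gamma_q^{d(x)}(T^{(0)})\, d\lambda_G^{u}(x) \right) d\mu_G^{(0)}(u),
\end{equation*}
where I have used that $\lambda_G^u$ is concentrated on $G^u$, on which $r(x) = u$. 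Since $\gamma_p^{\bullet}$ disintegrates $\mu_S^{(0)}$ over $\mu_G^{(0)}$, the map $u \mapsto \gamma_p^u(S^{(0)})$ is the Radon--Nikodym derivative $d(p_*\mu_S^{(0)})/d\mu_G^{(0)}$; as $p$ is a homomorphism of Haar groupoids we have $p_*\mu_S^{(0)} \sim \mu_G^{(0)}$ (established in Section \ref{sec:preliminaries}), so this derivative is strictly positive $\mu_G^{(0)}$-almost everywhere, and likewise $v \mapsto \gamma_q^v(T^{(0)})$ is positive $\mu_G^{(0)}$-a.e.

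The crux is then to see that the inner integral is positive for $\mu_G^{(0)}$-almost every $u$. Set $N = \{ v \in G^{(0)} : \gamma_q^v(T^{(0)}) = 0 \}$, a $\mu_G^{(0)}$-null set. By Lemma \ref{lem:rG is measure class preserving} the set $r^{-1}(N)$ is $\mu_G$-null, and since $d^{-1}(N) = (r^{-1}(N))^{-1}$ and $\mu_G \sim \mu_G^{-1}$ by quasi-invariance of $\mu_G^{(0)}$, the set $d^{-1}(N)$ is $\mu_G$-null as well; hence $\lambda_G^u(d^{-1}(N)) = 0$ for $\mu_G^{(0)}$-a.e. $u$. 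For such $u$, the integrand $\gamma_q^{d(x)}(T^{(0)})$ is positive $\lambda_G^u$-a.e. on $G^u$, and as $\lambda_G^u(G^u) > 0$ the inner integral is strictly positive. Combining this with the a.e.-positivity of $\gamma_p^u(S^{(0)})$ and the fact that $\mu_G^{(0)} \neq 0$ yields $\mu_P^{(0)}(P^{(0)}) > 0$. I expect this transfer of the null set $N$ from the source variable $d(x)$ back to a $\lambda_G^u$-negligible statement --- which is precisely where measure-class preservation of the range map and quasi-invariance of $\mu_G^{(0)}$ enter --- to be the only genuine obstacle; everything else is an assembly of the cited results.
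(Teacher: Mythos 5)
Your assembly of the cited results matches what the paper does: its entire proof of this corollary is the sentence preceding it, which declares the statement an immediate consequence of Theorem \ref{thm:Haar system for P} and Proposition \ref{prop:muP is quasi invariant} (with Lemma \ref{lem:P is LC+T2+2nd} and Proposition \ref{prop:muP0 is Radon} implicitly supplying the topological and Radon conditions). Where you genuinely depart from the paper is in noticing that Definition \ref{def:Haar groupoid} also demands that $\mu_P^{(0)}$ be \emph{non-zero}, a point none of the cited results addresses and which the paper passes over in silence. Your argument for it is correct: $u\mapsto\gamma_p^u(S^{(0)})$ is indeed the density of $p_*\mu_S^{(0)}$ with respect to $\mu_G^{(0)}$ (because $\gamma_p^u$ is concentrated on $p^{-1}(u)$), hence positive $\mu_G^{(0)}$-a.e.\ by the measure-class-preservation proposition of Section \ref{sec:preliminaries}; and your transfer of the null set $N$ from the $d$-fibers back to a $\lambda_G^u$-a.e.\ statement, via $d^{-1}(N)=(r^{-1}(N))^{-1}$, Lemma \ref{lem:rG is measure class preserving} and the quasi-invariance $\mu_G\sim\mu_G^{-1}$, is exactly the right mechanism and uses nothing beyond what the paper has already established. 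Together with $\lambda_G^u(G^u)>0$ and $\mu_G^{(0)}\neq 0$ this gives $\mu_P^{(0)}(P^{(0)})>0$. So your proof is correct and strictly more complete than the paper's; the only cost is length, and the only caveat is the one the paper itself carries, namely that Proposition \ref{prop:muP0 is Radon} (hence the corollary) rests on Assumption \ref{ass:LB disintegration}.
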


\begin{prop}\label{prop:weak pullback maps are measure class preserving}
The maps $\pi_S:P \rightarrow S$ and $\pi_T:P \rightarrow T$ are homomorphisms of Haar groupoids.
\end{prop}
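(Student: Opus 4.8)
The plan is to invoke Proposition \ref{prop:piS and piT continuous homomorphisms}, which already provides that $\pi_S$ and $\pi_T$ are continuous groupoid homomorphisms, and then to verify the only remaining requirement of Definition \ref{def:Haar groupoid homomorphism}: that $\pi_S$ and $\pi_T$ are measure class preserving with respect to the induced measures, i.e. $(\pi_S)_*(\mu_P) \sim \mu_S$ and $(\pi_T)_*(\mu_P) \sim \mu_T$. I would treat $\pi_S$ in full and then obtain the statement for $\pi_T$ by the symmetric argument, interchanging the roles of $(S,p,\gamma_p^{\bullet})$ and $(T,q,\gamma_q^{\bullet})$.

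First I would fix a Borel set $A \subseteq S$ and compute $(\pi_S)_*(\mu_P)(A) = \mu_P(\pi_S^{-1}(A))$ by feeding the integrand $f(\sigma,x,\tau) = \chi_{_A}(\sigma)$ into the master formula of Lemma \ref{lem:expanding integral muP}. Since this integrand is independent of the $T$-variables, Tonelli's theorem permits carrying out the two innermost integrations first, producing the weight $\Psi(d(y)) := \int_{T^{(0)}} \lambda_T^t(T)\, d\gamma_q^{d(y)}(t)$; integrating out the $S$-variables next yields $H(r(y)) := \int_{S^{(0)}} \lambda_S^s(A)\, d\gamma_p^{r(y)}(s)$, and Lemma \ref{lem:integrating muG vs muG0} collapses the two outermost integrals, giving $\mu_P(\pi_S^{-1}(A)) = \int_G \Psi(d(y)) H(r(y))\, d\mu_G(y)$.

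The crux, and what I expect to be the main obstacle, is to show that the weight $\Psi(d(y))$ is strictly positive for $\mu_G$-almost every $y$, so that the vanishing of the integral is governed by $H$ alone; a priori the disintegration fibres $\gamma_q^v$ could degenerate on a set of positive measure. Since $supp(\lambda_T^t) = T^t \neq \emptyset$ forces $\lambda_T^t(T) > 0$ for every $t$, we have $\Psi(v) > 0$ exactly when $\gamma_q^v \neq 0$. To see that $\gamma_q^v \neq 0$ for $\mu_G^{(0)}$-almost every $v$, I would set $N = \{ v : \gamma_q^v(T^{(0)}) = 0 \}$ (a Borel set, as $\gamma_q^{\bullet}$ is a BSM); since each $\gamma_q^v$ is concentrated on $q^{-1}(v)$, one checks $\gamma_q^v(q^{-1}(N)) = 0$ for every $v$, whence the disintegration identity gives $\mu_T^{(0)}(q^{-1}(N)) = \int_{G^{(0)}} \gamma_q^v(q^{-1}(N))\, d\mu_G^{(0)}(v) = 0$, and the measure-class-preservation $q_*(\mu_T^{(0)}) \sim \mu_G^{(0)}$ (the Proposition preceding Definition \ref{def:category HG}) forces $\mu_G^{(0)}(N) = 0$. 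Transporting this along $d$ requires $d_*(\mu_G) \sim \mu_G^{(0)}$, which I would derive from $r_*(\mu_G) \sim \mu_G^{(0)}$ of Lemma \ref{lem:rG is measure class preserving} together with the quasi-invariance $\mu_G \sim \mu_G^{-1}$ and the identity $d^{-1}(E) = (r^{-1}(E))^{-1}$, giving $d_*(\mu_G) = r_*(\mu_G^{-1}) \sim r_*(\mu_G) \sim \mu_G^{(0)}$; hence $\Psi(d(y)) > 0$ for $\mu_G$-a.e. $y$.

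With positivity of the weight established, $\mu_P(\pi_S^{-1}(A)) = 0$ holds if and only if the nonnegative integrand vanishes $\mu_G$-a.e., that is, $H(r(y)) = 0$ for $\mu_G$-a.e. $y$. Using $r_*(\mu_G) \sim \mu_G^{(0)}$ this is equivalent to $H = 0$ $\mu_G^{(0)}$-a.e., and since $\int_{G^{(0)}} H(w)\, d\mu_G^{(0)}(w) = \int_{G^{(0)}} \int_{S^{(0)}} \lambda_S^s(A)\, d\gamma_p^w(s)\, d\mu_G^{(0)}(w) = \int_{S^{(0)}} \lambda_S^s(A)\, d\mu_S^{(0)}(s) = \mu_S(A)$ — the middle equality being the disintegration of $\mu_S^{(0)}$ by $\gamma_p^{\bullet}$ and the last the definition of the induced measure $\mu_S$ — the a.e.\ vanishing of $H$ is equivalent to $\mu_S(A) = 0$. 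This yields $(\pi_S)_*(\mu_P)(A) = 0 \Leftrightarrow \mu_S(A) = 0$, i.e.\ $(\pi_S)_*(\mu_P) \sim \mu_S$. Re-running the computation with the $S$- and $T$-data interchanged gives $(\pi_T)_*(\mu_P) \sim \mu_T$, completing the proof.
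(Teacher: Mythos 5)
Your proof is correct. For $\pi_S$ it follows essentially the paper's path --- expand $\mu_P(\pi_S^{-1}(A))$ via Lemma \ref{lem:expanding integral muP}, integrate out the $T$-variables into a positive weight, and reduce to the disintegration identity for $\gamma_p^{\bullet}$ --- but you diverge in two worthwhile ways. First, the paper asserts outright that its weight $h_1(u)=\int_G\int_{T^{(0)}}\lambda_T^t(T)\,d\gamma_q^{d(y)}(t)\,d\lambda_G^u(y)$ is strictly positive because $\lambda_T^t(T)>0$; this tacitly assumes the fibres $\gamma_q^v$ are nonzero, which you actually establish ($\mu_G^{(0)}$-almost everywhere, which is all the argument needs) from the disintegration identity together with $q_*(\mu_T^{(0)})\sim\mu_G^{(0)}$. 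Second, for $\pi_T$ the paper cannot rerun the $\pi_S$ computation verbatim, because $\gamma_q^{d(y)}$ is indexed by $d(y)$ while $\lambda_G^u$ forces $r(y)=u$; it therefore inserts $\Delta_G^{-1}(y)$ via formula (\ref{eq:Delta inverse}) to swap $r$ and $d$ before collapsing against $\gamma_q^{u}$ and $\mu_G^{(0)}$ to recover $\mu_T^{(0)}$. You sidestep this by never fully collapsing the integral: you prove $d_*(\mu_G)\sim\mu_G^{(0)}$ once (this is exactly where the quasi-invariance of $\mu_G^{(0)}$ enters, so the paper's ``detour'' is still present, merely repackaged as a statement about null sets), after which the $S$- and $T$-cases become literally symmetric. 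Both routes prove the same equivalences; yours is somewhat cleaner in that the modular function never appears explicitly and the positivity of the weights is honestly justified, while the paper's explicit computation has the minor advantage of producing the concrete densities $h_1\circ p$ and $h_2\circ q$ relating the pushed-forward measures to $\mu_S$ and $\mu_T$.
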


\begin{proof}
By lemma \ref{prop:piS and piT continuous homomorphisms}, the maps $\pi_S$ and $\pi_T$ are continuous groupoid homomorphisms. It remains to show that they are measure class preserving with respect to the induced measures. We prove first that $(\pi_S)_*(\mu_P) \sim \mu_S$.

Let $\Sigma \subseteq S$ be a Borel subset. Using the definition of $\mu_P$, we have
$$(\pi_S)_* (\mu_P) (\Sigma) = \mu_P(\pi_S^{-1}(\Sigma)) = \int_{P^{(0)}} \lambda_P^{(s,g,t)} (\pi_S^{-1}(\Sigma)) d \mu_P^{(0)}(s,g,t).$$
Observe that $\pi_S^{-1}(\Sigma) = \{ (\sigma, x, \tau) \in P ~|~ \sigma \in \Sigma  \} = \left( \Sigma \times G \times T \right) \cap P$. Substituting
$\lambda_P^{(s,g,t)} = \lambda_S^{s} \times \delta_g \times \lambda_T^{t}$ according to Definition \ref{def:Haar for P}, and noting that systems of measures are concentrated on fibers, we get:
\begin{eqnarray*}
\lambda_P^{(s,g,t)} (\pi_S^{-1}(\Sigma)) &=& \lambda_P^{(s,g,t)} \left((\Sigma \times G \times T) \cap P^{(s,g,t)} \right) \\
&=& (\lambda_S^{s} \times \delta_g \times \lambda_T^{t}) \left( (\Sigma \times G \times T) \cap ( S^s \times \{g\} \times T^t) \right) \qquad (\text{by Lemma \ref{lem:fibers of P}}) \\
&=& \lambda_S^{s} (\Sigma) \cdot \delta_g (\{g\}) \cdot \lambda_T^{t} (T) \\
&=& \lambda_S^{s} (\Sigma)  \lambda_T^{t} (T)
\end{eqnarray*}
Therefore, using Lemma \ref{lem:expanding integral muP} and then rewriting $\eta^y$ by Proposition \ref{prop:eta}, we have
\begin{eqnarray*}
\hspace{-2cm}(\pi_S)_* (\mu_P) (\Sigma) &=& \int_G \int_{P^{(0)}} \lambda_S^{s}(\Sigma) \lambda_T^{t}(T) d\eta^y (s,g,t) d\mu_G (y) \\
&=& \int_G\int\!\!\!\!\int\!\!\!\!\int_{S^{(0)}\times G\times T^{(0)}}\lambda_S^{s}(\Sigma) \lambda_T^{t}(T)d\gamma_p^{r(y)}(s)d\delta_y(g)d\gamma_q^{d(y)}(t)d\mu_G(y) \\
&=& \int_G\int\!\!\!\!\int_{S^{(0)}\times T^{(0)}}\lambda_S^{s}(\Sigma) \lambda_T^{t}(T)d\gamma_p^{r(y)}(s)d\gamma_q^{d(y)}(t)d\mu_G(y)
\end{eqnarray*}
We use Fubini's theorem, as well as Lemma \ref{lem:integrating muG vs muG0}, to obtain
\begin{eqnarray*}
\hspace{0.8cm} &=& \int_{G^{(0)}}\int_G\int_{S^{(0)}}\int_{T^{(0)}}\lambda_S^{s}(\Sigma) \lambda_T^{t}(T)d\gamma_q^{d(y)}(t)d\gamma_p^{r(y)}(s)d\lambda_G^u (y)d\mu_G^{(0)}(u)
\end{eqnarray*}
Furthermore, the fact that $\lambda_G^u$ is supported on $G^u$ dictates that $r(y)=u$, hence we get
\begin{eqnarray*}
\hspace{0.5cm} &=& \int_{G^{(0)}}\int_G\int_{S^{(0)}}\lambda_S^{s}(\Sigma) \int_{T^{(0)}} \lambda_T^{t}(T)d\gamma_q^{d(y)}(t)d\gamma_p^{u}(s)d\lambda_G^u (y)d\mu_G^{(0)}(u) \\
&=& \int_{G^{(0)}}\int_{S^{(0)}}\lambda_S^{s}(\Sigma)\int_G \int_{T^{(0)}} \lambda_T^{t}(T)d\gamma_q^{d(y)}(t)d\lambda_G^u (y)d\gamma_p^{u}(s)d\mu_G^{(0)}(u)
\end{eqnarray*}
We now define a function $h_1$ on $G^{(0)}$ by $$h_1(u) = \int_G \int_{T^{(0)}} \lambda_T^{t}(T)d\gamma_q^{d(y)}(t)d\lambda_G^u (y).$$ Since $\lambda_T^{t}(T) >0$ for any $t$, the function $h_1(u)$ is strictly positive on $G^{(0)}$. Returning to our main calculation, we have:
\begin{eqnarray*}
(\pi_S)_* (\mu_P) (\Sigma) &=& \int_{G^{(0)}}\int_{S^{(0)}}\lambda_S^{s}(\Sigma) h_1(u) d\gamma_p^{u}(s)d\mu_G^{(0)}(u) \\
&=& \int_{G^{(0)}}\int_{S^{(0)}}\lambda_S^{s}(\Sigma) h_1(p(s)) d\gamma_p^{u}(s)d\mu_G^{(0)}(u)
\end{eqnarray*}
since $\gamma_p^{u}$ is concentrated on $p^{-1}(u)$. Finally, $\gamma_p^{\bullet}$ is a disintegration of $\mu_S^{(0)}$ with respect to $\mu_G^{(0)}$, hence
\begin{eqnarray*}
(\pi_S)_* (\mu_P) (\Sigma) &=& \int_{S^{(0)}}\lambda_S^{s}(\Sigma) h_1(p(s)) d\mu_S^{(0)}(s)
\end{eqnarray*}
On the other hand, $\displaystyle \mu_S(\Sigma) = \int_{S^{(0)}}\lambda_S^{s}(\Sigma) d\mu_S^{(0)}(s).$ It follows that $\mu_S(\Sigma) = 0$ if and only if $(\pi_S)_* (\mu_P) (\Sigma) =0$. \\

We turn to $\pi_T$. Proving that $(\pi_T)_*(\mu_P) \sim \mu_T$ will require a detour via the quasi-invariance of $\mu_G^{(0)}$. Let $\Omega \subseteq T$ be a Borel subset. Tracing the line of arguments above, we have $$(\pi_T)_* (\mu_P) (\Omega) = \mu_P(\pi_T^{-1}(\Omega)) = \int_{P^{(0)}} \lambda_P^{(s,g,t)} (\pi_T^{-1}(\Omega)) d \mu_P^{(0)}(s,g,t),$$ where
\begin{eqnarray*}
\lambda_P^{(s,g,t)} (\pi_T^{-1}(\Omega)) &=& \lambda_P^{(s,g,t)} \left((S \times G \times \Omega) \cap P^{(s,g,t)} \right) \\
&=& (\lambda_S^{s} \times \delta_g \times \lambda_T^{t}) \left( (S \times G \times \Omega) \cap ( S^s \times \{g\} \times T^t) \right) \\
&=& \lambda_S^{s} (S)  \lambda_T^{t} (\Omega)
\end{eqnarray*}
Therefore,
\begin{eqnarray*}
\hspace{-1.8cm} (\pi_T)_* (\mu_P) (\Omega) &=& \int_G \int_{P^{(0)}} \lambda_S^{s} (S)  \lambda_T^{t} (\Omega) d\eta^y (s,g,t) d\mu_G (y) \\
&=& \int_G\int\!\!\!\!\int\!\!\!\!\int_{S^{(0)}\times G\times T^{(0)}}\lambda_S^{s}(S) \lambda_T^{t}(\Omega)d\gamma_p^{r(y)}(s)d\delta_y(g)d\gamma_q^{d(y)}(t)d\mu_G(y) \\
&=& \int_G\int_{T^{(0)}}\int_{S^{(0)}}\lambda_S^{s}(S)\lambda_T^{t} (\Omega)d\gamma_p^{r(y)}(s)d\gamma_q^{d(y)}(t)d\mu_G(y)
\end{eqnarray*}
Using the quasi-invariance of $\mu_G^{(0)}$ and formula (\ref{eq:Delta inverse}) of Remark \ref{rem:Delta}, we get
\begin{eqnarray*}
&=& \int_G\int_{T^{(0)}}\int_{S^{(0)}}\lambda_S^{s}(S)\lambda_T^{t} (\Omega)\Delta_G^{-1}(y)d\gamma_p^{d(y)}(s)d\gamma_q^{r(y)}(t)d\mu_G(y)
\end{eqnarray*}
Replacing $\gamma_q^{r(y)}$ by $\gamma_q^u$ as before, and using Lemma \ref{lem:integrating muG vs muG0} and Fubini's theorem, we get
\begin{eqnarray*}
&=& \int_{G^{(0)}}\int_G\int_{T^{(0)}}\lambda_T^{t} (\Omega)\int_{S^{(0)}}\lambda_S^{s}(S)\Delta_G^{-1}(y)d\gamma_p^{d(y)}(s)d\gamma_q^{u}(t)d\lambda_G^u (y)d\mu_G^{(0)}(u) \\
&=& \int_{G^{(0)}}\int_{T^{(0)}}\lambda_T^{t} (\Omega)\int_G\int_{S^{(0)}}\lambda_S^{s}(S)\Delta_G^{-1}(y)d\gamma_p^{d(y)}(s)d\lambda_G^u (y)d\gamma_q^{u}(t)d\mu_G^{(0)}(u)
\end{eqnarray*}
The function $h_2$ on $G^{(0)}$ defined by $$h_2(u) = \int_G \int_{S^{(0)}} \lambda_S^{s}(S)\Delta_G^{-1}(y)d\gamma_p^{d(y)}(s)d\lambda_G^u (y)$$ is positive since $\lambda_S^{s}(S) >0$ for any $s$ and the modular function $\Delta_G$ is positive. Returning to our main calculation, we have:
\begin{eqnarray*}
(\pi_T)_* (\mu_P) (\Omega) &=& \int_{G^{(0)}}\int_{T^{(0)}}\lambda_T^{t} (\Omega) h_2(u) d\gamma_q^{u}(t)d\mu_G^{(0)}(u) \\
&=& \int_{G^{(0)}}\int_{T^{(0)}}\lambda_T^{t} (\Omega) h_2(q(t)) d\gamma_q^{u}(t)d\mu_G^{(0)}(u) \\
\end{eqnarray*}
since $\gamma_q^{u}$ is concentrated on $q^{-1}(u)$. Finally, $\gamma_q^{\bullet}$ is a disintegration of $\mu_T^{(0)}$ with respect to $\mu_G^{(0)}$, hence
\begin{eqnarray*}
(\pi_T)_* (\mu_P) (\Omega) &=& \int_{T^{(0)}}\lambda_T^{t}(\Omega) h_2(q(t)) d\mu_T^{(0)}(t)
\end{eqnarray*}
On the other hand, $\displaystyle \mu_T(\Omega) = \int_{T^{(0)}}\lambda_T^{t}(\Omega)d\mu_T^{(0)}(t).$ It follows that $\mu_T(\Omega) = 0$ if and only if $(\pi_T)_* (\mu_P) (\Omega) =0$. This completes the proof.
\end{proof}

Recall our standing Assumption \ref{ass:LB disintegration}, by which the maps $p$ and $q$ (restricted to the unit spaces) admit disintegrations which are locally bounded. As we show in the following proposition, the map $\pi_S$ will automatically inherit this property. However, in order to guarantee that the map $\pi_T$ admits a disintegration which is locally bounded, we will need another assumption.

\begin{ass}\label{ass:Delta_G locally bounded}
We will assume that the modular function $\Delta_G$ is locally bounded on $G$, in the sense that for every point $x \in G$ there exist a neighborhood $U_x$ and positive constants $c_x$ and $C_x$ such that $c_x < \Delta_G (y) < C_x$ for every $y \in U_x$.
\end{ass}

Note that $\Delta_G^{-1}$ is locally bounded whenever $\Delta_G$ is locally bounded.

\begin{rem}
If we assume that $\Delta_S$ and $\Delta_T$ are also locally bounded in the above sense, then Remark \ref{rem:DeltaP} implies that $\Delta_P$ is locally bounded as well.
\end{rem}

\begin{prop}\label{prop:weak pullback maps admit LB disintegrations}
The maps $\pi_S:P^{(0)}\rightarrow S^{(0)}$ and $\pi_T:P^{(0)}\rightarrow T^{(0)}$ admit disintegrations which are locally bounded.
\end{prop}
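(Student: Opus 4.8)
The plan is to exhibit explicit disintegrations $\beta_S^{\bullet}$ on $\pi_S:P^{(0)}\to S^{(0)}$ and $\beta_T^{\bullet}$ on $\pi_T:P^{(0)}\to T^{(0)}$ and to verify directly that they are locally bounded. The disintegration theorem (Corollary 6.6 of \cite{BSM}) already furnishes locally finite BSM disintegrations; the content of the proposition is the upgrade to local boundedness, which I would obtain by reading closed formulas for $\beta_S^{\bullet}$ and $\beta_T^{\bullet}$ off the expression for $\int_{P^{(0)}} f\,d\mu_P^{(0)}$ and then estimating these systems on a basis of elementary open sets with relatively compact components. The starting point, obtained exactly as in the proof of Lemma \ref{lem:expanding integral muP} but using Lemma \ref{lem:integrating dmuP0}, Proposition \ref{prop:eta} and Lemma \ref{lem:integrating muG vs muG0}, is
\[
\int_{P^{(0)}} f\, d\mu_P^{(0)} = \int_{G^{(0)}}\int_G\int_{S^{(0)}}\int_{T^{(0)}} f(s,y,t)\, d\gamma_q^{d(y)}(t)\, d\gamma_p^{r(y)}(s)\, d\lambda_G^u(y)\, d\mu_G^{(0)}(u).
\]
Throughout I would use that $\lambda_G^{\bullet}$ is a CSM, hence locally bounded (Lemma 2.11 of \cite{BSM}), that $\gamma_p^{\bullet},\gamma_q^{\bullet}$ are locally bounded by Assumption \ref{ass:LB disintegration}, and that a system bounded on each member of a basis of relatively compact open sets is locally bounded.

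For $\pi_S$ the computation is direct. Using that $\lambda_G^u$ is supported on $G^u$ I would replace $\gamma_p^{r(y)}$ by $\gamma_p^u$; interchanging the $s$-integral outward and using that $\gamma_p^u$ is concentrated on $p^{-1}(u)$, I would replace $\lambda_G^u$ by $\lambda_G^{p(s)}$; finally, since $\gamma_p^{\bullet}$ disintegrates $\mu_S^{(0)}$ over $\mu_G^{(0)}$, the outer pair $\int_{G^{(0)}}\int_{S^{(0)}}\cdots\,d\gamma_p^u(s)\,d\mu_G^{(0)}(u)$ collapses to $\int_{S^{(0)}}\cdots\,d\mu_S^{(0)}(s)$, leaving
\[
\int_{P^{(0)}} f\, d\mu_P^{(0)} = \int_{S^{(0)}} \left( \int_G \int_{T^{(0)}} f(s,y,t)\, d\gamma_q^{d(y)}(t)\, d\lambda_G^{p(s)}(y) \right) d\mu_S^{(0)}(s).
\]
This identifies $\beta_S^{s}$ as the inner system, which is concentrated on $\pi_S^{-1}(s)$ and is a BSM as a composition of BSMs. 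Evaluating on $E=(A\times B\times C)\cap P^{(0)}$ with $A,B,C$ relatively compact gives $\beta_S^s(E)\le \bigl(\sup_v \gamma_q^v(C)\bigr)\bigl(\sup_u \lambda_G^u(B)\bigr)$, a finite bound independent of $s$, so $\beta_S^{\bullet}$ is locally bounded.

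The case of $\pi_T$ is where the main obstacle lies, and it is the reason for Assumption \ref{ass:Delta_G locally bounded}. In the display above $\gamma_q$ carries the index $d(y)$, which is mismatched with the outer variable $u=r(y)$ on $\operatorname{supp}\lambda_G^u$, so the $\gamma_q$-integral cannot be collapsed against $\mu_G^{(0)}$ to $\mu_T^{(0)}$. The plan is to integrate out $\mu_G$ and then invoke the quasi-invariance of $\mu_G^{(0)}$ via formula (\ref{eq:Delta inverse}) of Remark \ref{rem:Delta} to substitute $y\mapsto y^{-1}$; this interchanges $r$ and $d$ at the cost of a factor $\Delta_G^{-1}(y)$, after which $\gamma_q$ acquires the index $r(y)=u$ and aligns with the outer integration, while $\lambda_G^u$ becomes $\lambda_G^{q(t)}$ on $\operatorname{supp}\gamma_q^u$. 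Disintegrating then yields
\[
\int_{P^{(0)}} f\, d\mu_P^{(0)} = \int_{T^{(0)}} \left( \int_G \int_{S^{(0)}} f(s,y^{-1},t)\, \Delta_G^{-1}(y)\, d\gamma_p^{d(y)}(s)\, d\lambda_G^{q(t)}(y) \right) d\mu_T^{(0)}(t),
\]
which identifies $\beta_T^{t}$.

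Local boundedness of $\beta_T^{\bullet}$ is the crux. Estimating on an elementary open set $E=(A\times B\times C)\cap P^{(0)}$, the factor $\chi_B(y^{-1})=\chi_{B^{-1}}(y)$ confines the $y$-integration to $B^{-1}$, which is relatively compact because inversion is a homeomorphism, and produces the uniform factors $\sup_v \gamma_p^v(A)$ and $\sup_u \lambda_G^u(B^{-1})$ as before. The genuinely new ingredient is the surviving factor $\Delta_G^{-1}(y)$: here Assumption \ref{ass:Delta_G locally bounded} is exactly what is needed, since it guarantees $\sup_{y\in \overline{B^{-1}}}\Delta_G^{-1}(y)<\infty$. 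Combining these three bounds gives a finite estimate for $\beta_T^t(E)$ independent of $t$, so $\beta_T^{\bullet}$ is a locally bounded disintegration and the proof is complete. The asymmetry between the effortless $\pi_S$ case and the $\pi_T$ case, forced by the detour through quasi-invariance, is precisely why $\pi_T$ requires the extra hypothesis on $\Delta_G$ that $\pi_S$ does not.
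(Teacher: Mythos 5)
Your argument is correct and reaches the same estimates as the paper, but by a genuinely different route. The paper does not construct a disintegration at all: it invokes Proposition 6.8 of \cite{BSM}, which characterizes the existence of a locally bounded disintegration by the condition that for every compact $K\subseteq P^{(0)}$ there is a constant $C_K$ with $\mu_P^{(0)}(K\cap\pi_S^{-1}(\Sigma))\leq C_K\cdot\mu_S^{(0)}(\Sigma)$ for all Borel $\Sigma$, and then verifies this inequality by enclosing $K$ in a set $(K_1\times K_2\times K_3)\cap P^{(0)}$ with compact factors and running exactly the computation you run, arriving at the constants $C=\left(\sup_u\gamma_q^u(K_3)\right)\left(\sup_v\lambda_G^v(K_2)\right)$ for $\pi_S$ and, after the same quasi-invariance detour through $\Delta_G^{-1}$, $D=\left(\sup_u\gamma_p^{u}(K_1)\right)\left(\sup_{x\in K_2^{-1}}\Delta_G^{-1}(x)\right)\left(\sup_v\lambda_G^{v}(K_2^{-1})\right)$ for $\pi_T$. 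You instead exhibit the disintegrations $\beta_S^{\bullet}$ and $\beta_T^{\bullet}$ in closed form and bound them on elementary open sets; the computational core (collapsing $\gamma_p$ against $\mu_G^{(0)}$, the mismatch of indices for $\pi_T$, and the role of Assumption \ref{ass:Delta_G locally bounded}) is identical. What your route buys is explicit formulas for the disintegrations, which the paper's existence argument does not provide; what it costs is the obligation to check that your formulas define genuine Borel systems of measures, i.e.\ that $s\mapsto\beta_S^s(E)$ and $t\mapsto\beta_T^t(E)$ are Borel for every Borel $E$. You assert this as ``a composition of BSMs,'' and it can indeed be justified with the lifting and composition machinery of \S\S 3--4 of \cite{BSM}, exactly as in the construction of $\eta^{\bullet}$ in Proposition \ref{prop:eta}, but as written it is the one step of your proof that is asserted rather than proved; the paper's appeal to Proposition 6.8 of \cite{BSM} sidesteps this verification entirely.
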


\begin{proof}
We start with the map $\pi_S$. We shall use Proposition 6.8 from \cite{BSM}, which provides a necessary and sufficient condition for admitting a disintegration which is locally bounded: for any compact set $K\subseteq P^{(0)}$ there must exist a constant $C_{_K}$ such that for all Borel sets $\Sigma \subseteq S^{(0)}$, $\mu_P^{(0)}(K\cap \pi_S^{-1}(\Sigma))\leq C_{_K}\cdot \mu_S^{(0)}(\Sigma).$

Let $K\subseteq P^{(0)}$ be compact. Consider three increasing sequences $\{A_n\}$, $\{B_n\}$ and $\{C_n\}$ of open subsets with compact closures in $S$, $G$ and $T$ respectively, such that $S=\bigcup_{n=1}^{\infty}A_n$, $G=\bigcup_{n=1}^{\infty}B_n$, and $T=\bigcup_{n=1}^{\infty}C_n$ (such sequences exist in any locally compact second countable space). The elementary open sets $E_n=(A_n\times B_n\times C_n)\cap P^{(0)}$ determine an increasing open cover of $P^{(0)}$ and in particular of $K$. Since $K$ is compact, $K\subseteq E_i$ for some $i$. Denoting $K_1 = \overline{A_i}$, $K_2 = \overline{B_i}$ and $K_3 = \overline{C_i}$, we have $K\subseteq (K_1\times K_2\times K_3)\cap P^{(0)}$ where $K_1 \subseteq S$, $K_2 \subseteq G$ and $K_3 \subseteq T$ are each compact.

For any Borel set $\Sigma \subseteq S^{(0)}$,
\begin{eqnarray*}
\mu_P^{(0)}(K\cap \pi_S^{-1}(\Sigma)) &\leq& \mu_P^{(0)}((K_1\!\times\! K_2\!\times\! K_3)\cap \pi_S^{-1}(\Sigma)) \ = \ \mu_P^{(0)}(((K_1\cap\Sigma)\times K_2\times K_3)\cap P^{(0)}) \\ &=& \int_{K_2}\gamma_p^{r(x)}(K_1\cap\Sigma)\gamma_q^{d(x)}(K_3) d\mu_G(x)
\end{eqnarray*}
where the last equality follows from a calculation as in the proof of Proposition \ref{prop:muP0 is Radon}. Expanding $\mu_G$ we get
\begin{eqnarray*}
&=& \int_{G^{(0)}}\int_{K_2}\gamma_p^{r(x)}(K_1\cap\Sigma)\gamma_q^{d(x)}(K_3) d\lambda_G^u(x)d\mu_G^{(0)}(u) \\
&\leq& \int_{G^{(0)}}\int_{K_2}\gamma_p^{r(x)}(\Sigma)\gamma_q^{d(x)}(K_3) d\lambda_G^u(x)d\mu_G^{(0)}(u)
\end{eqnarray*}
Next, we note that $r(x) = u$ since $\lambda_G^u$ is supported on $r^{-1}(u)$, and then rewrite $\gamma_p^{u}(\Sigma)$:
\begin{eqnarray*}
&=& \int_{G^{(0)}}\int_{K_2}\gamma_p^{u}(\Sigma)\gamma_q^{d(x)}(K_3)d\lambda_G^u(x)d\mu_G^{(0)}(u) \\
&=& \int_{G^{(0)}}\int_{K_2}\int_{S^{(0)}}\chi_{_\Sigma}(s)\gamma_q^{d(x)}(K_3) d\gamma_p^{u}(s) d\lambda_G^u(x)d\mu_G^{(0)}(u)
\end{eqnarray*}
We use Fubini's Theorem and note that $p(s)=u$ since $\gamma_p^{u}$ is supported on $p^{-1}(u)$, after which we can collapse the outer two integrals, since $\gamma_p$ is a disintegration:
\begin{eqnarray*}
&=& \int_{G^{(0)}}\int_{S^{(0)}} \int_{K_2} \chi_{_\Sigma}(s)\gamma_q^{d(x)}(K_3)  d\lambda_G^{p(s)}(x) d\gamma_p^{u}(s) d\mu_G^{(0)}(u) \\
&=& \int_{S^{(0)}}\int_{K_2}\chi_{_\Sigma}(s)\gamma_q^{d(x)}(K_3) d\lambda_G^{p(s)}(x)d\mu_S^{(0)}(s) \ \leq \ C\cdot\mu_S^{(0)}(\Sigma),
\end{eqnarray*}
where $\displaystyle C=\left(\sup_u \gamma_q^{u}(K_3)\right)\cdot\left(\sup_v\lambda_G^{v}(K_2)\right)$. Both suprema exist since $\gamma_q^{\bullet}$ and $\lambda_G^{\bullet}$ are locally bounded, hence bounded on compact sets.

We turn to the map $\pi_T$. The proof will be analogous, but will require the use of the function $\Delta_G^{-1}$, which is locally bounded by Assumption \ref{ass:Delta_G locally bounded}. Let $\Omega\subseteq T^{(0)}$.
\begin{eqnarray*}
\mu_P^{(0)}(K\cap \pi_T^{-1}(\Omega)) &\leq& \mu_P^{(0)}((K_1\!\times\! K_2\!\times\! K_3)\cap \pi_T^{-1}(\Omega)) \ = \ \mu_P^{(0)}((K_1\times K_2\times (K_3\cap\Omega))\cap P^{(0)}) \\
&=& \int_{K_2}\gamma_p^{r(x)}(K_1)\gamma_q^{d(x)}(K_3\cap\Omega) d\mu_G(x) \\
&=& \int_{K_2^{-1}}\gamma_p^{d(x)}(K_1)\gamma_q^{r(x)}(K_3\cap\Omega)\Delta_G^{-1}(x)\ d\mu_G(x)
\end{eqnarray*}
Skipping intermediate calculations which mimic the $\pi_S$ case, we get
\begin{eqnarray*}
&\leq& \int_{T^{(0)}}\int_{K_2^{-1}}\chi_{\Omega}(t)\gamma_p^{d(x)}(K_1)\Delta_G^{-1}(x) d\lambda_G^{q(t)}(x)d\mu_T^{(0)}(t) \ \leq \ D \cdot \mu_T^{(0)}(\Omega)
\end{eqnarray*}
where $\displaystyle D=\left(\sup_u\gamma_p^{u}(K_1)\right)\cdot \left(\sup_{x\in K_2^{-1}} \Delta_G^{-1}(x) \right)\cdot \left(\sup_v\lambda_G^{v}(K_2^{-1})\right)$. All suprema exist since $\gamma_p^{\bullet}$ and $\lambda_G^{\bullet}$ are bounded on compact sets, and $\Delta_G^{-1}$ is locally bounded.
\end{proof}

\section*{Acknowledgments}

We thank John Baez, Christopher Walker and most of all Paul Muhly for inspiring discussions and useful remarks.

\end{document}